\titlespacing{\section}{0cm}{3.5pc}{1.5pc}
\def\@citex[#1]#2{\if@filesw\immediate\write\@auxout{\string\citation{#2}}\fi
  \def\@citea{}\@cite{\@for\@citeb:=#2\do
    {\@citea\def\@citea{\@citesep}\@ifundefined
       {b@\@citeb}{{\bf ?}\@warning
       {Citation `\@citeb' on page \thepage \space undefined}}%
{\csname b@\@citeb\endcsname}}}{#1}}
\def\@citesep{; }
\newtheoremstyle{Kang}{}{}{\itshape}{}{\bf}{}{.5em}{}
\theoremstyle{Kang}
\newtheorem{theorem}{Theorem}[section]
\newtheorem{lemma}[theorem]{Lemma}
\newtheorem{prop}[theorem]{Proposition}
\newtheoremstyle{Kremark}{}{}{}{}{\bf}{}{.5em}{}
\theoremstyle{Kremark}
\newtheorem*{remark}{Remark.}
\newtheorem{defn}[theorem]{Definition}
\newtheorem{other}{}
\newenvironment{Case}[1]{\medskip {\it Case #1.}}{}
\def\fn#1{\operatorname{#1}} 
\def\bm#1{\mathbbm{#1}}
\title{Rationality Problems for Relation Modules of Dihedral Groups}
\author{Akinari Hoshi$^{(1)}$, Ming-chang Kang$^{(2)}$ and Aiichi Yamasaki$^{(3)}$ \\[3mm]
\begin{minipage}{16cm} \begin{description} \itemsep=-1pt
\item[] $^{(1)}$Department of Mathematics, Niigata University,
Niigata 950-2181,\\ Japan, E-mail: hoshi@math.sc.niigata-u.ac.jp \item[]
$^{(2)}$Department of Mathematics, National Taiwan University,
Taipei,\\ Taiwan, E-mail: kang@math.ntu.edu.tw \item[]
$^{(3)}$Department of Mathematics, Kyoto University, Kyoto 606-8502,\\
Japan, E-mail: aiichi.yamasaki@gmail.com
\end{description} \end{minipage}}
\date{}
\begin{document}

\maketitle

\footnote{\textit{\!\!\! $2010$ Mathematics Subject
Classification}. 14E08, 11R33, 20C10, 20F05.}
\footnote{\textit{\!\!\! Keywords and phrases}. Rationality
problem, integral representation, free presentation, relation
module, algebraic torus.} \footnote{\!\!\! This work was partially
supported by JSPS KAKENHI Grant Numbers 24540019, 25400027, 16K05059.}

\footnote{\!\!\! Parts of the work were finished when the
first-named author and the third-named author were visiting the
National Center for Theoretic Sciences (Taipei Office), whose
support is gratefully acknowledged.}

\begin{abstract}
{\noindent\bf Abstract.} Let $D_n=\langle
\sigma,\tau:\sigma^n=\tau^2=1,\tau\sigma\tau^{-1}=\sigma^{-1}\rangle$
be the dihedral group of order $2n$ where $n\ge 2$. Let $1\to R\to
F\xrightarrow{\varepsilon} D_n\to 1$ be the free presentation of
$D_n$ where $F=\langle s_1,s_2\rangle$ is the free group of rank
two and $\varepsilon(s_1)=\sigma$, $\varepsilon(s_2)=\tau$. The
conjugation maps of $F$ provide an action of $F$ on $R$. Thus
$R^{ab}:=R/[R,R]$ becomes a $\bm{Z}[D_n]$-lattice. $R^{ab}$ is
called the relation module of $D_n$ by Gruenberg. For any Galois extension $K/k$ with $\fn{Gal}(K/k)=D_n$, $K(R^{ab})^{D_n}$ is stably rational over $k$ if and only if $n$ is an odd integer. Moreover, (i) if $K(R^{ab})^{D_n}$ is stably rational over $k$, it is rational over $k$, and (ii) if $K(R^{ab})^{D_n}$ is not stably rational over $k$ and $k$ is an infinite field, then $K(R^{ab})^{D_n}$ is not retract rational over $k$. We will also discuss the rationality of the purely monomial action where $D_n$ acts trivially on the field $k$. It will be shown that, if $n$ is odd, then $k(R^{ab})^{D_n}=k(D_n)(t)$ for any field $k$. \end{abstract}

\newpage
\section{Introduction}

Let $G$ be a finite group, $F=\langle s_1,\ldots,s_d\rangle$ be the free group of rank $d$.
A surjective homomorphism $\varepsilon:F\to G$ gives rise to a free presentation $1\to R \to F \xrightarrow{\varepsilon} G\to 1$ of $G$.
Since $R$ is a normal subgroup of $F$, $F$ acts on $R$ by the conjugation maps.
Consequently $G$ acts on $R^{ab}:=R/[R,R]$ where $[R,R]$ is the commutator subgroup of $R$.
Since $R$ itself is a free group of rank $1+(d-1)|G|$ by Schreier's Theorem \cite[page 36]{Kur},
$R^{ab}$ is a free abelian group of the same rank.
It follows that $R^{ab}$ becomes a $\bm{Z}[G]$-module which is a free abelian of finite rank,
i.e.\ $R^{ab}$ is a $\bm{Z}[G]$-lattice (in short, $G$-lattice) in the sense of \cite[page 524]{CR1} and Section 2 of this article.
In other words, $R^{ab}$ provides an integral representation $G\to GL_t(\bm{Z})$ for some positive integer $t$ (see \cite[Chapter 3]{CR1}).
The $G$-lattice $R^{ab}$ is called the relation module of $G$ by Gruenberg \cite{Gr1,Gr2}.

Consider the case $G=D_n=\langle \sigma,\tau:\sigma^n=\tau^2=1,
\tau\sigma \tau^{-1}=\sigma^{-1}\rangle$, the dihedral group of
order $2n$ where $n\ge 2$.

\begin{defn} \label{d1.1}
Let $1\to R\to F\xrightarrow{\varepsilon} D_n\to 1$ be a free
presentation of $D_n$ where $F=\langle s_1,s_2\rangle$ the free
group of rank 2, and $\varepsilon(s_1)=\sigma$,
$\varepsilon(s_2)=\tau$. The $D_n$-lattice $R^{ab}$ is the subject
we will study in Section 5. The relation module $R^{ab}$ depends
on the group $G$, the free group $F$, and also on the epimorphism
$\varepsilon:F\to D_n$ (see \cite[page 79]{Gr2} for details).

\end{defn}

The following theorem is due to Gruenberg
and Roggenkamp.

\begin{theorem} [Gruenberg and Roggenkamp \cite{GR}] \label{t1.2}
Let $1\to R\to F\xrightarrow{\varepsilon} D_n\to 1$ be the free
presentation of $D_n$ in Definition \ref{d1.1} where $n \ge 2$. The lattice $R^{ab}$ is an
indecomposable $D_n$-lattice if and only if $n$ is an even integer.
\end{theorem}

In this article we focus on the rationality problem for $R^{ab}$. If $k$ is a
field and $M$ is a $G$-lattice, we will define a multiplicative
action of $G$ on the field $k(M)$ in Definition \ref{d2.3}. Thus
we may consider the rationality problem of $k(R^{ab})^{D_n}$ in
Theorem \ref{t1.3}. Similarly we may consider the rationality
problem of $K(R^{ab})^{D_n}$ if $K/k$ is a Galois extension with
$\fn{Gal}(K/k)=D_n$. The reader may find a description of
Noether's problem and the definition of $k(D_n)$ of Theorem
\ref{t1.3} in Definition \ref{d6.1}. As to the terminology of
$k$-rationality, stable $k$-rationality, retract $k$-rationality,
see Definition \ref{d2.4}. Here are the main results of this paper.

\begin{theorem} \label{t1.4}
Let $R^{ab}$ be the relation module associated to the free resolution $1\to R\to F\xrightarrow{\varepsilon} D_n\to 1$ in Definition \ref{d1.1}. Let $K/k$ be a Galois
extension with $\fn{Gal}(K/k)=D_n$. If
$n$ is an odd integer $\ge 3$, then $K(R^{ab})^{D_n}$ is rational
over $k$. If $n$ is an even integer $\ge
2$, then $K(R^{ab})^{D_n}$ is not stably rational over $k$; moreover, if $k$ is an infinite field, then $K(R^{ab})^{D_n}$ is not retract rational over $k$.
\end{theorem}

\begin{theorem} \label{t1.3}
Let $R^{ab}$ be the relation module associated to the free resolution $1\to R\to F\xrightarrow{\varepsilon} D_n\to 1$ in Definition \ref{d1.1}. If $n$ is
an odd integer $\ge 3$ and $k$ is a field, then $k(R^{ab})^{D_n}=k(D_n)(t)$ for some
element $t$ transcendental over the field $k(D_n)$. Consequently,
if Noether's problem for $D_n$ over the field $k$ has an
affirmative answer (e.g.\ $\zeta_n+\zeta_n^{-1}\in k$ where
$\zeta_n$ is a primitive $n$-th root of unity), then
$k(R^{ab})^{D_n}$ is rational over $k$.
\end{theorem}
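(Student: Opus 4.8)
The plan is to combine the Gruenberg--Fox presentation of $R^{ab}$ with the ``no-name'' lemma for multiplicative actions. Fox's free differential calculus applied to $1\to R\to F\xrightarrow{\varepsilon}D_n\to 1$ yields an exact sequence of $D_n$-lattices
\[ 0\to R^{ab}\to \bm{Z}[D_n]^2\xrightarrow{\partial}I_{D_n}\to 0, \]
where $I_{D_n}$ is the augmentation ideal and $\partial(e_1)=\sigma-1$, $\partial(e_2)=\tau-1$. The two facts I would use repeatedly are: (i) the multiplicative-invariant field of the regular representation is the Noether field, $k(\bm{Z}[D_n])^{D_n}=k(D_n)=k(x_g:g\in D_n)^{D_n}$; and (ii) the no-name lemma: if $0\to M\to E\to P\to 0$ is exact with $P$ a permutation lattice, then $k(E)^{D_n}$ is rational (purely transcendental) over $k(M)^{D_n}$ of transcendence degree $\operatorname{rank}P$. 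Fact (ii) is proved by lifting a permutation basis of $P$ to $E$, noting that $D_n$ then acts on the lifted variables by a monomial-permutation action twisted by a cocycle valued in $k(M)^\times$, and killing the cocycle by Hilbert's Theorem~90.

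For odd $n\ge 3$ the goal is to exhibit the regular representation inside $R^{ab}$ as a sublattice with trivial quotient, i.e.\ a short exact sequence of $D_n$-lattices
\[ 0\to \bm{Z}[D_n]\to R^{ab}\to \bm{Z}\to 0 \]
in which the rank-one quotient carries the trivial action. Granting this, fact (ii) with $P=\bm{Z}$ shows $k(R^{ab})^{D_n}$ is rational of degree one over $k(\bm{Z}[D_n])^{D_n}=k(D_n)$, which is exactly $k(R^{ab})^{D_n}=k(D_n)(t)$. To build the sequence I would use the decomposition $R^{ab}\simeq M_+\oplus\widetilde M_+$ of Theorem \ref{t1.2} together with the explicit Fox-derivative images of the relators $\sigma^n,\tau^2,(\sigma\tau)^2$, namely $(1+\sigma+\cdots+\sigma^{n-1},0)$, $(0,1+\tau)$ and $(1+\sigma\tau,\sigma+\tau)$. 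Each single relator generates only a proper permutation sublattice of rank $2$ or $n$ (induced from $\langle\sigma\rangle$ or from a reflection subgroup), so the regular sublattice must be produced from a suitable $\bm{Z}[D_n]$-combination of these generators. The final ``consequently'' clause is then formal: if Noether's problem for $D_n$ over $k$ is affirmative, then $k(D_n)$, hence $k(D_n)(t)$, is rational over $k$, and $\zeta_n+\zeta_n^{-1}\in k$ is one of the classical cases in which $k(D_n)$ is known to be rational.

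For $n=2$ the group is the Klein four-group $V_4$ and $R^{ab}$ is the indecomposable rank-$5$ lattice of Theorem \ref{t1.2}. Here I would again start from $0\to R^{ab}\to\bm{Z}[V_4]^2\to I_{V_4}\to 0$ and peel off permutation quotients by the no-name lemma so as to reduce $k(R^{ab})^{V_4}$ to the invariant field of a small-rank $V_4$-lattice, then finish by writing down an explicit transcendence basis of invariants. The computation must genuinely exploit that the $D_2$-action on the coefficients is split: by Theorem~3 the twisted field $K(R^{ab})^{D_2}$ is not even stably rational, so no purely formal or stable argument can succeed and the rationality over $k$ must be established by an honest construction.

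The step I expect to be the main obstacle is, for odd $n$, proving that the rank-one quotient in $0\to\bm{Z}[D_n]\to R^{ab}\to\bm{Z}\to 0$ really carries the \emph{trivial} action rather than the sign lattice $\bm{Z}_-$ on which $\tau$ acts by $-1$. If the quotient were $\bm{Z}_-$, it would fail to be a permutation lattice, the no-name lemma would not apply, and one would pick up a quadratic twist (a square root of a norm) in place of a free variable---precisely the mechanism behind the non-rationality phenomena in the even case. Showing that the oddness of $n$ forces the trivial quotient, via the explicit generators above, is the crux; the analogous delicate point for $n=2$ is carrying out the explicit invariant-theoretic construction in a way that uses, rather than evades, the split structure.
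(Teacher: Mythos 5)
Your plan for the odd case breaks down at precisely the step you flagged as the crux, but for a stronger reason than the trivial-versus-sign dichotomy you anticipated: \emph{no} short exact sequence $0\to\bm{Z}[D_n]\to R^{ab}\to\bm{Z}\to 0$ exists, and none with quotient $\bm{Z}_-$ either. Since $\bm{Z}[D_n]$ is induced from the trivial subgroup, $\operatorname{Ext}^1_{\bm{Z}[D_n]}(\bm{Z},\bm{Z}[D_n])\simeq H^1(D_n,\bm{Z}[D_n])=0$ (and likewise with $\bm{Z}_-$ in place of $\bm{Z}$), so any such extension would split and would force $R^{ab}\simeq\bm{Z}[D_n]\oplus\bm{Z}$. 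That isomorphism is false. Tate cohomology is additive over direct sums, and using the paper's decomposition $R^{ab}\simeq M_+\oplus\widetilde{M}_+$ (Theorem \ref{t5.7}) one computes: $\widehat{H}^0(D_n,M_+)\simeq\widehat{H}^0(\langle\tau\rangle,\bm{Z})=\bm{Z}/2\bm{Z}$ by Shapiro's lemma, while for $\widetilde{M}_+$ the invariant lattice is $\bm{Z}\cdot\sum_i w_i$ and the norm map hits both $2\sum_i w_i$ (the norm of any $w_j$) and $n\sum_i w_i$ (the norm of $w$), hence, as $n$ is odd, all of the invariants; so $\widehat{H}^0(D_n,\widetilde{M}_+)=0$ and $\widehat{H}^0(D_n,R^{ab})\simeq\bm{Z}/2\bm{Z}$. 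By contrast $\widehat{H}^0(D_n,\bm{Z}[D_n]\oplus\bm{Z})\simeq\bm{Z}/2n\bm{Z}$, and $\widehat{H}^0(D_n,\bm{Z}[D_n]\oplus\bm{Z}_-)=0$. So $R^{ab}$ contains no copy of the regular lattice with torsion-free rank-one quotient, and the no-name lemma cannot be applied in the way you propose; the appearance of $k(D_n)$ in the theorem is a statement about fields that is not witnessed by a lattice embedding.

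The paper circumvents exactly this obstruction by never embedding $\bm{Z}[D_n]$ into $R^{ab}$: in Theorem \ref{t6.4} both $k(R^{ab})^{D_n}$ and $k(\bm{Z}[D_n])^{D_n}$ are exhibited as purely transcendental extensions of the \emph{common} field $k(\bm{Z}[D_n/\langle\tau\rangle])^{D_n}$, and the transcendence degrees are then matched. Concretely, $R^{ab}\simeq M_+\oplus\widetilde{M}_+$ together with the stable-permutation identity $\widetilde{M}_+\oplus\bm{Z}\simeq\bm{Z}[D_n/\langle\sigma\rangle]\oplus\bm{Z}[D_n/\langle\tau\rangle]$ (Theorem \ref{t3.4}) and Proposition \ref{p6.3} give $k(R^{ab})^{D_n}\simeq k(\bm{Z}[D_n/\langle\tau\rangle])^{D_n}(t_1,t_2,y_1,\dots,y_{n-1})$, while the substitution $z_i=x(\sigma^i)+x(\sigma^i\tau)$ plus the linearization Theorem \ref{t6.2} give $k(\bm{Z}[D_n])^{D_n}=k(\bm{Z}[D_n/\langle\tau\rangle])^{D_n}(u_1,\dots,u_n)$; comparing yields $k(R^{ab})^{D_n}=k(D_n)(t)$. (It is the failure of Krull--Schmidt, not a lattice-theoretic containment, that makes this work.) Your $n=2$ sketch has a parallel structural flaw: in $0\to R^{ab}\to\bm{Z}[D_2]^{(2)}\to I_{D_2}\to 0$ the quotient $I_{D_2}$ is not a permutation lattice, and even if it were, the no-name lemma would only describe $k(\bm{Z}[D_2]^{(2)})^{D_2}$ over $k(R^{ab})^{D_2}$ --- the wrong direction. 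The paper's Theorem \ref{t6.5} instead uses a permutation (in fact trivial) quotient of $R^{ab}$ itself, namely $0\to M_1\to R^{ab}\to\bm{Z}^{(2)}\to 0$ with $M_1$ the kernel of the norm map, and finishes with the Hajja--Kang rationality theorem for three-dimensional purely monomial actions. Finally, note that your statement of the no-name lemma omits the faithfulness hypothesis on the action on $k(M)$ that Proposition \ref{p6.3} and Theorem \ref{t6.2} require.
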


Note that the stable rationality in Theorem \ref{t1.4} follows from Endo and Miyata's Theorem (see Theorem \ref{t2.7} and Theorem \ref{t5.8}). However, to assert the stronger result that $K(R^{ab})^{D_n}$ is rational over $k$ if $n$ is odd, extra efforts are required. First we need an explicit decomposition $R^{ab}\simeq M_+ \oplus \widetilde{M}_+$ (see Theorem \ref{t5.7}) which was anticipated by Theorem \ref{t1.2}; see Definition \ref{d3.1} and Definition \ref{d3.3} for the constructions of $M_+$ and $\widetilde{M}_+$. Then applying Theorem \ref{t3.4} (i.e. the identity $\widetilde{M}_+ \oplus \bm{Z}\simeq
\bm{Z}[D_n/\langle \sigma \rangle] \oplus \bm{Z}[D_n/\langle
\tau\rangle]$) we can establish the rationality of $K(R^{ab})^{D_n}$. We remark also that the case for $n=2$ or $3$ in Theorem \ref{t1.4}, were proved by Kunyavskii \cite{Ku1,Ku2}.

So far as we know, the rationality problem of $k(R^{ab})^{D_n}$
was investigated first in a paper of Snider. His result is the
following.

\begin{theorem}[Snider \cite{Sn}] \label{t1.5}
Let the notations be the same as in Theorem \ref{t1.3}.
Let $k$ be any field.
If $n=2$, then $k(R^{ab})^{D_2}$ is rational over $k$.
If $n$ is an odd integer $\ge 3$ and $\zeta_n\in k$,
then $k(R^{ab})^{D_n}$ is stably rational over $k$.
\end{theorem}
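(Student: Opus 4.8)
The plan is to derive Theorem~\ref{t1.5} from the sharper Theorem~\ref{t1.3}, and then to record the self-contained lattice-theoretic route underlying Snider's original argument, since the latter explains where the hypothesis $\zeta_n\in k$ is spent.

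First I would simply quote Theorem~\ref{t1.3}. For $n=2$ that theorem already asserts that $k(R^{ab})^{D_2}$ is rational over $k$, which is the first claim verbatim. For odd $n\ge 3$, note that $\zeta_n\in k$ forces $\zeta_n+\zeta_n^{-1}\in k$, so Noether's problem for $D_n$ over $k$ has an affirmative answer; hence the second clause of Theorem~\ref{t1.3} yields $k(R^{ab})^{D_n}=k(D_n)(t)$, which is then rational over $k$, a fortiori stably $k$-rational. Thus Theorem~\ref{t1.5} is completely subsumed, and the genuine content lives inside Theorems~\ref{t1.2} and~\ref{t1.3}.

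To prove Theorem~\ref{t1.5} directly, without invoking Theorem~\ref{t1.3}, I would start from the presentation of $R^{ab}$ by permutation lattices,
\[ 0\to R^{ab}\to \bm{Z}[D_n]^2\to I_{D_n}\to 0,\qquad 0\to I_{D_n}\to \bm{Z}[D_n]\to \bm{Z}\to 0, \]
and then repeatedly apply the no-name lemma: whenever a $D_n$-lattice splits as $M\oplus P$ with $P$ a permutation lattice, $k(M\oplus P)^{D_n}$ is rational over $k(M)^{D_n}$. This reduces the computation of $k(R^{ab})^{D_n}$ to the multiplicative invariant field of a small ``core'' lattice, up to adjoining transcendentals. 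For odd $n$ one feeds in the decomposition $R^{ab}\simeq M_+\oplus\widetilde{M}_+$ of Theorem~\ref{t1.2}, discards the permutation/trivial part of $\widetilde{M}_+$, and is left to analyse $M_+$; for $n=2$ one analyses the rank-$5$ lattice for $C_2\times C_2$ directly.

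The hard part will be controlling the flasque (flabby) class of this core lattice, i.e.\ showing that after stripping permutation summands the remainder is stably permutation, which is exactly what yields stable rationality. This is where $\zeta_n\in k$ enters: over such a $k$ the cyclic character lattice of $\langle\sigma\rangle$ splits into one-dimensional pieces defined over $k$, and this is what lets the relevant flasque class collapse to $0$. The weaker hypothesis $\zeta_n+\zeta_n^{-1}\in k$ does not split it, which is precisely why Snider obtains only stable rationality; the sharpening to genuine rationality under $\zeta_n+\zeta_n^{-1}\in k$ is what Theorem~\ref{t1.3} accomplishes by reducing directly to Noether's problem for $D_n$ in place of a stably-permutation criterion.
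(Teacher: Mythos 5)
Your first paragraph is correct and is essentially the paper's own route: Theorem \ref{t1.5} is quoted from Snider without proof, and the paper's Theorem \ref{t1.3} (proved independently in Section 6 as Theorems \ref{t6.4} and \ref{t6.5}) subsumes it exactly as you say --- for $n=2$ verbatim, and for odd $n\ge 3$ because $\zeta_n\in k$ implies $\zeta_n+\zeta_n^{-1}\in k$, whence $k(R^{ab})^{D_n}=k(D_n)(t)$ is $k$-rational and a fortiori stably $k$-rational. There is no circularity, since Section 6 nowhere relies on Snider's result.

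One correction to your supplementary sketch, which misplaces where the hypothesis $\zeta_n\in k$ is spent. The collapse of the flasque class of the ``core'' lattice has nothing to do with roots of unity in $k$: by Theorem \ref{t5.7}, $R^{ab}\oplus\bm{Z}\simeq \bm{Z}[D_n/\langle\sigma\rangle]\oplus\bm{Z}[D_n/\langle\tau\rangle]^{(2)}$ is an isomorphism of $\bm{Z}[D_n]$-lattices, so $R^{ab}$ is stably permutation over every field, and consequently $k(R^{ab})^{D_n}$ is stably isomorphic to $k(D_n)$ unconditionally (this is precisely what Theorem \ref{t6.4} establishes, with no hypothesis on $k$). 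The root-of-unity assumption is spent entirely on the other factor of the reduction, namely on Noether's problem for $D_n$ --- the rationality of $k(D_n)$ itself, via \cite[Proposition 2.6]{CHK} --- not on splitting the character lattice of $\langle\sigma\rangle$ or trivializing any flasque class; this is also why the paper can weaken Snider's hypothesis to $\zeta_n+\zeta_n^{-1}\in k$ and still upgrade stable rationality to rationality.
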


An alternative proof of Snider's Theorem for $k(R^{ab})^{D_2}$ will be given in Section 5 using an idea of Kunyavskii \cite{Ku1}. In fact, we will prove in Theorem \ref{t6.9} that $k(M)^{D_2}$ is stably rational over $k$ for any $D_2$-lattice $M$ of rank $\le 5$. We don't know the answer if $M$ is a $D_2$-lattice $M$ of rank $\ge 6$. Nor do we know the answer to the stable rationality of $k(R^{ab})^{D_n}$ if $n \ge 4$ is an even integer.

The motivation of Snider to consider the above rationality problem
came from the attempt to prove whether the norm residue morphism
$R_{n,F}:K_2(F)/nK_2(F)\to H^2(F,\mu_n^{\otimes 2})$ is an
isomorphism where $F$ is a field \cite[page 144]{Mi}. For the
surjectivity of $R_{n,F}$, Snider used the idea of generic crossed
products with group $G$ ($G$ varies over all the possible finite
groups); in particular, he considered the generic crossed products
with group $D_n$. Thus the rationality of $k(R^{ab})^{D_n}$
enables him to prove the surjectivity of $R_{n,F}$ if the
corresponding Brauer class is similar to a crossed product with
group $D_n$. The isomorphism of $R_{n,F}$ was solved by Merkurjev
and Suslin \cite{MS} in 1983. However, the rationality of
$k(R^{ab})^{D_n}$ or $k(R^{ab})^G$ (where $G$ is any finite group)
remains a challenging problem.

We organize this paper as follows. In Section 3, we construct various $D_n$-lattices when $n$ is an odd integer.
These $D_n$-lattices turn out to be fundamental building blocks of
indecomposable $D_p$-lattices when $p$ is an odd prime number (see the
paper of Myrma Pike Lee \cite{Le} for details). We will prove some identities of these
$D_n$-lattices, e.g.\ $\widetilde{M}_+ \oplus \bm{Z}\simeq
\bm{Z}[D_n/\langle \sigma \rangle] \oplus \bm{Z}[D_n/\langle
\tau\rangle]$ (see Theorem \ref{t3.4} and Theorem \ref{t3.5}). Besides applications in the rationality problem, these identities exemplify the phenomenon that the
Krull-Schmidt-Azumaya's Theorem \cite[page 128]{CR1} may fail in
the category of $G$-lattices. We study some homological properties of
these lattices in Section 4, e.g.\ flabby, coflabby, invertible,
flabby resolutions (see Definition \ref{d2.1}). These homological
properties are used in the proof of Theorem \ref{t5.8}. The proofs of Theorem
\ref{t1.4} and Theorem \ref{t1.3} are given in Section 5.

\bigskip
Terminology and notations. In this paper, all the groups $G$ are
finite groups. A free group $F$ is called a free group of rank $d$
if $F$ is a free group on $d$ generators $s_1,\ldots,s_d$. A free
presentation of $G$, $1\to R\to F\xrightarrow{\pi} G\to 1$, is a
surjective morphism $\pi:F\to G$ where $F$ is a free group and $R$
is the kernel of $\pi$. $C_n$ and $D_n$ denote the cyclic group of
order $n$ and the dihedral group of order $2n$. If $R$ is a group,
then $[R,R]$ is its commutator subgroup and $R^{ab}$ denotes the
quotient group $R/[R,R]$. When $k$ is a field, we write
$\fn{gcd}\{n,\fn{char}k\}=1$ to denote the situation either
$\fn{char}k=0$ or $\fn{char}k=p>0$ with $p\nmid n$. By $\zeta_n$
we mean a primitive $n$-th root of unity in some field. If $M$ is
a module over some ring $A$, we denote by $M^{(n)}$ the direct sum
of $n$ copies of $M$.

\bigskip
Acknowledgments. We thank the anonymous referee for his insightful, constructive and meticulous comments.

\section{\boldmath Preliminaries of $G$-lattices}

Let $G$ be a finite group. Recall that a finitely generated
$\bm{Z}[G]$-module $M$ is called a $G$-lattice if it is
torsion-free as an abelian group. We define the rank of $M$:
$\fn{rank}_{\bm{Z}}M=n$ if $M$ is a free abelian group of rank
$n$. If $M$ is a $G$-lattice, define
$M^0:=\fn{Hom}_{\bm{Z}}(M,\bm{Z})$ which is also a $G$-lattice;
$M^0$ is called the dual lattice of $M$.

A $G$-lattice $M$ is called a permutation lattice if $M$ has a
$\bm{Z}$-basis permuted by $G$. A $G$-lattice $M$ is called stably
permutation if $M\oplus P$ is a permutation lattice where $P$ is
some permutation lattice. $M$ is called an invertible lattice if
it is a direct summand of some permutation lattice. A $G$-lattice
$M$ is called a flabby lattice if $H^{-1}(S,M)=0$ for any subgroup
$S$ of $G$; it is called coflabby if $H^1(S,M)=0$ for any subgroup
$S$ of $G$ (here $H^{-1}(S,M)$ and $H^1(S,M)$ denote the Tate cohomology groups of $S$). If $S$ is a subgroup of $G$, we will write
$\bm{Z}[G/S]$ for the permutation lattice $\bm{Z}[G]
\otimes_{\bm{Z}[S]}\bm{Z}$ where $\bm{Z}$ is the trivial
$\bm{Z}[S]$-lattice. For the basic notions of $G$-lattices, see
\cite{Sw3,Lo}.

\medskip
Let $G$ be a finite group.
Two $G$-lattices $M_1$ and $M_2$ are similar,
denoted by $M_1\sim M_2$, if $M_1\oplus P_1 \simeq M_2 \oplus P_2$ for some permutation $G$-lattices $P_1$ and $P_2$.
The flabby class monoid $F_G$ is the class of all flabby $G$-lattices under the similarity relation.
In particular, if $M$ is a flabby lattice,
$[M]\in F_G$ denotes the equivalence class containing $M$;
we define $[M_1]+[M_2]=[M_1\oplus M_2]$ and thus $F_G$ becomes an abelian monoid \cite{Sw3}.

\begin{defn} \label{d2.1}
Let $G$ be a finite group, $M$ be any $G$-lattice. Then $M$ has a
flabby resolution, i.e. there is an exact sequence of
$G$-lattices: $0\to M\to P\to E\to 0$ where $P$ is a permutation
lattice and $E$ is a flabby lattice. The class $[E]\in F_G$ is
uniquely determined by the lattice $M$ \cite{Sw3}. We define
$[M]^{fl}=[E]\in F_G$, following the nomenclature in \cite[page
38]{Lo}. Sometimes we will say that $[M]^{fl}$ is permutation or
invertible if the class $[E]$ contains a permutation or invertible
lattice.
\end{defn}

\begin{defn} \label{d2.2}
Let $K/k$ be a finite Galois field extension with $G=\fn{Gal}(K/k)$.
Let $M=\bigoplus_{1\le i\le n} \bm{Z}\cdot e_i$ be a $G$-lattice.
We define an action of $G$ on $K(M)=K(x_1,\ldots,x_n)$,
the rational function field of $n$ variables over $K$,
by $\sigma\cdot x_j=\prod_{1\le i\le n} x_i^{a_{ij}}$ if $\sigma\cdot e_j=\sum_{1\le i\le n} a_{ij} e_i \in M$,
for any $\sigma \in G$ (note that $G$ acts on $K$ also).
The fixed field is denoted by $K(M)^G$.
\end{defn}

If $T$ is an algebraic torus over $k$ satisfying
$T\times_{\fn{Spec}(k)}\fn{Spec}(K)\simeq\bm{G}_{m,K}^n$ where
$\bm{G}_{m,K}$ is the one-dimensional multiplicative group over
$K$, then $M:=\fn{Hom}(T,\bm{G}_{m,K})$ is a $G$-lattice and the
function field of $T$ over $k$ is isomorphic to $K(M)^G$ by Galois
descent \cite[page 36; Vo]{Sw3}.

\begin{defn} \label{d2.3}
We give a generalization of $K(M)^G$ in Definition \ref{d2.2}. Let
$M=\bigoplus_{1\le i\le n} \bm{Z}\cdot e_i$ be a $G$-lattice,
$k'/k$ be a finite Galois extension field such that there is a
surjection $G\to \fn{Gal}(k'/k)$. Thus $G$ acts naturally on $k'$
by $k$-automorphisms. We define an action of $G$ on
$k'(M)=k'(x_1,\ldots,x_n)$ in a similar way as $K(M)$. The fixed
field is denoted by $k'(M)^G$. The action of $G$ on $k'(M)$ is
called a purely quasi-monomial action in \cite[Definition
1.1]{HKK}; it is possible that $G$ acts faithfully on $k'$ (the
case $k'=K$) or trivially on $k'$ (the case $k'=k$).
\end{defn}

We recall the notions of rationality, stable rationality and retract rationality.

\begin{defn} \label{d2.4}
Let $k\subset L$ be a field extension. The field $L$ is rational
over $k$ (in short, $k$-rational) if, for some $n$, $L\simeq
k(X_1,\ldots,X_n)$, the rational function field of $n$ variables
over $k$. $L$ is called stably rational over $k$ (or, stably
$k$-rational) if the field $L(Y_1,\ldots,Y_m)$ is $k$-rational
where $Y_1,\ldots,Y_m$ are some elements algebraically independent
over $L$. When $k$ is an infinite field, $L$ is called retract
$k$-rational, if there exist an affine domain $A$ whose quotient
field is $L$ and $k$-algebra morphisms $\varphi: A\to
k[X_1,\ldots,X_n][1/f]$, $\psi: k[X_1,\ldots,X_n][1/f]\to A$
satisfying $\psi\circ\varphi=1_A$ the identity map on $A$ where
$k[X_1,\ldots,X_n]$ is a polynomial ring over $k$, $f\in
k[X_1,\ldots,X_n]\backslash \{0\}$ \cite[Definition 3.1; Ka,
Definition 1.1]{Sa}.
\end{defn}

It is known that ``$k$-rational" $\Rightarrow$ ``stably
$k$-rational" $\Rightarrow$ ``retract $k$-rational". Moreover, if
$k$ is an algebraic number field, retract $k$-rationality of
$k(G)$ implies the inverse Galois problem is true for the field
$k$ and the group $G$ \cite [Ka]{Sa} (see Definition \ref{d6.1}
for the definition of $k(G)$).

\begin{theorem} \label{t2.5}
Let $K/k$ be a finite Galois extension field, $G=\fn{Gal}(K/k)$
and $M$ be a $G$-lattice.
\begin{enumerate}
\item[{\rm (1)}] {\rm (Voskresenskii, Endo and Miyata
\cite[Theorem 1.2; Len, Theorem 1.7]{EM1})} $K(M)^G$ is stably
$k$-rational if and only if $[M]^{fl}$ is permutation, i.e.\ there
exists a short exact sequence of $G$-lattices $0\to M\to P_1\to
P_2\to 0$ where $P_1$ and $P_2$ are permutation $G$-lattices.
\item[{\rm (2)}] {\rm (Saltman \cite[Theorem 3.14; Ka, Theorem
2.8]{Sa})} Assume that $k$ is an infinite field. Then $K(M)^G$ is retract $k$-rational if and only if
$[M]^{fl}$ is invertible.
\end{enumerate}
\end{theorem}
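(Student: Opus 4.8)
The plan is to treat the two equivalences in parallel, since they share one mechanism: a short exact sequence of $G$-lattices is to be translated into a birational relation between the associated tori (equivalently, between the fields $K(\cdot)^G$), and the only difference between statement (1) and statement (2) will be that ``is a permutation lattice'' gets weakened to ``is a direct summand of a permutation lattice.'' Throughout, $T_N$ denotes the torus with character lattice $N$, so that $k(T_N)=K(N)^G$ and the contravariant torus functor sends an exact sequence $0\to M\to P\to E\to 0$ to an exact sequence of tori $1\to T_E\to T_P\to T_M\to 1$.

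First I would isolate the one computational input, the multiplicative no-name lemma: for any $G$-lattice $M$ and any permutation $G$-lattice $P$, the field $K(M\oplus P)^G$ is rational over $K(M)^G$. Since $P=\bigoplus_i \bm{Z}[G/H_i]$, this reduces to $P=\bm{Z}[G/H]$, where the adjoined variables are permuted (semilinearly, with the $G$-action on $K$) according to the cosets $G/H$; a twisted Hilbert 90 / normal-basis argument then straightens them into genuine transcendentals over $K(M)^G$. In particular (take $M=0$) every permutation lattice yields a $k$-rational field, and, more usefully, the lemma shows that similar lattices $M_1\sim M_2$ give stably $k$-isomorphic fields $K(M_1)^G$ and $K(M_2)^G$. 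This already makes $[M]^{fl}\in F_G$ a plausible invariant of the stable birational type of $K(M)^G$.

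For the ``if'' halves I would run the torus dictionary in reverse. If $[M]^{fl}$ is permutation, the theorem's own reformulation furnishes a sequence $0\to M\to P_1\to P_2\to 0$ with both $P_i$ permutation; the corresponding sequence $1\to T_{P_2}\to T_{P_1}\to T_M\to 1$ exhibits $T_{P_1}$ as a $T_{P_2}$-torsor over $T_M$ with \emph{special} (quasi-trivial) structure group $T_{P_2}$, hence Zariski-locally trivial, so $T_{P_1}$ is $k$-birational to $T_M\times T_{P_2}$. As $T_{P_1}$ is $k$-rational, $K(M)^G=k(T_M)$ is stably $k$-rational. If instead $[M]^{fl}$ is only invertible, I would take a flabby resolution $0\to M\to P\to E\to 0$ with $E$ invertible, write $E\oplus E'\simeq Q$ permutation, and feed the splitting of the induced torsor into Saltman's criterion to produce the $k$-algebra maps $\varphi,\psi$ with $\psi\circ\varphi=1$ demanded by Definition \ref{d2.4}; this gives retract $k$-rationality.

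The converse directions are where the real work lies, and I expect them to be the main obstacle, since they require showing that these lattice-theoretic classes are genuine obstructions and not merely sufficient conditions. For (1) I would follow Endo--Miyata and promote $[M]^{fl}$ to a bona fide invariant of the stable $k$-isomorphism class of $K(M)^G$ — using the no-name reductions above to absorb the extra variables, and a specialization/cohomology argument to show that a purely transcendental extension contributes the trivial flabby class — so that stable rationality forces $[M]^{fl}=0$. For (2) I would follow Saltman: the retraction data $\psi\circ\varphi=1$, passed through a generic (versal) torus torsor, must descend to a lattice-level splitting realizing the flabby summand as a direct factor of a permutation lattice, i.e.\ $[M]^{fl}$ invertible. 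The delicate points — that $[\,\cdot\,]^{fl}$ is well defined and additive on $F_G$, that it is unchanged under specialization of a rational family, and that Saltman's lifting property transfers cleanly from fields to lattices — are precisely the technical core of \cite{EM1} and \cite{Sa}, which I would cite for the arguments only sketched here.
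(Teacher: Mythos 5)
The paper offers no proof of Theorem 2.5 at all: it is a background result quoted verbatim with citations to Endo--Miyata, Lenstra, Saltman and Kang, so there is no internal argument to compare yours against. Your sketch is correct as far as it goes and is essentially a faithful outline of the proofs in exactly those cited sources (the multiplicative no-name lemma, the torus dictionary with quasi-trivial/special structure group for the ``if'' directions, and the Endo--Miyata and Saltman machinery for the converses, which you explicitly defer to the references) --- the same reliance on the literature that the paper itself makes.
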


\begin{theorem}[{Endo and Miyata \cite[Theorem 1.5; Sw2, Theorem 4.4]{EM2}}] \label{t2.6}
Let $G$ be a finite group. Then all the flabby (resp. coflabby)
$G$-lattices are invertible $\Leftrightarrow$ all the Sylow
subgroups of $G$ are cyclic $\Leftrightarrow$ $[I_G^0]^{fl}$ is
invertible where $I_G$ is the augmentation ideal of the group ring
$\bm{Z}[G]$.
\end{theorem}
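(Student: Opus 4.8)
Label the three assertions: $(a)$ every flabby $G$-lattice is invertible, $(b)$ every Sylow subgroup of $G$ is cyclic, $(c)$ $[I_G^0]^{fl}$ is invertible; write $(a')$ for the coflabby variant of $(a)$. The plan is to prove the cycle $(b)\Rightarrow(a)\Rightarrow(c)\Rightarrow(b)$ and to reduce $(a')$ to $(a)$ by duality. The duality is cheap: by Tate duality a lattice $M$ is flabby if and only if its dual $M^0$ is coflabby (concretely $H^{-1}(S,M)$ is dual to $H^1(S,M^0)$ for every subgroup $S$), and $M$ is invertible if and only if $M^0$ is, since permutation lattices are self-dual; applying $(a)$ to $M^0$ therefore gives $(a')$ and conversely. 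The implication $(a)\Rightarrow(c)$ is likewise immediate, because by Definition \ref{d2.1} the class $[I_G^0]^{fl}$ is represented by a flabby lattice $E$, which $(a)$ forces to be invertible. Thus the real content lies in the two implications $(b)\Rightarrow(a)$ and $(c)\Rightarrow(b)$.

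For $(b)\Rightarrow(a)$ I would argue $p$-locally and then Sylow-by-Sylow. First, invertibility is a local property: a $G$-lattice $M$ is invertible precisely when $\bm{Z}_p\otimes_{\bm{Z}}M$ is an invertible $\bm{Z}_p[G]$-lattice for every prime $p$ dividing $|G|$. Second, if $G_p$ is a Sylow $p$-subgroup then $[G:G_p]$ is a unit in $\bm{Z}_p$, so the relative trace gives a splitting and $\bm{Z}_p\otimes_{\bm{Z}}M$ is invertible over $\bm{Z}_p[G]$ as soon as its restriction to $G_p$ is invertible over $\bm{Z}_p[G_p]$. Third, flabbiness is inherited by restriction to any subgroup, since it is defined through all subgroups. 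These reductions collapse the problem to a single assertion: over a cyclic $p$-group every flabby lattice is invertible. Here one exploits that Tate cohomology of a cyclic group is periodic of period two, so on each (cyclic) subgroup $H^{-1}(S,-)=H^1(S,-)$ and a flabby lattice is automatically coflabby; one then concludes, from the explicit description of the $p$-adic representations of cyclic $p$-groups, that a lattice which is simultaneously flabby and coflabby is a direct summand of a permutation lattice.

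For $(c)\Rightarrow(b)$ I would prove the contrapositive, localizing at a non-cyclic Sylow $p$-subgroup. Dualizing the augmentation sequence $0\to I_G\to\bm{Z}[G]\to\bm{Z}\to0$ yields $0\to\bm{Z}\to\bm{Z}[G]\to I_G^0\to0$; restricting to a subgroup $H$ and using that $\bm{Z}[G]$ is free over $\bm{Z}[H]$ shows that $I_G^0|_H$ agrees with $I_H^0$ up to a permutation summand, so that the restriction of $[I_G^0]^{fl}$ to $H$ equals $[I_H^0]^{fl}$. Since restriction carries invertible lattices to invertible lattices, it suffices to prove that for a non-cyclic $p$-group $P$ the class $[I_P^0]^{fl}$ is \emph{not} invertible. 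This is the main obstacle and the genuinely hard, computational heart of the theorem. Because non-invertibility propagates to overgroups under restriction, and every non-cyclic $p$-group contains either $C_p\times C_p$ or (when $p=2$) the quaternion group $Q_8$, it is enough to settle these two critical cases. For each I would detect non-invertibility by a cohomological obstruction: localizing at $p$, reducing modulo $p$, and analyzing $\bm{F}_p[P]$-modules so as to produce an invariant of the flabby class $[I_P^0]^{fl}$ in $F_P$ that cannot be annihilated by adding any permutation lattice. Closing this point completes the cycle, and the duality remark then yields the coflabby statement $(a')$ as well; every remaining step is a formal reduction or local bookkeeping.
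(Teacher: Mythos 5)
The paper never proves this statement: Theorem 2.6 is imported verbatim from Endo--Miyata [EM2, Theorem 1.5] and Swan [Sw2, Theorem 4.4], so your outline has to stand on its own, and it does not. To your credit, the formal architecture you set up is the classical one and is correct: the duality reduction of the coflabby case to the flabby case (via $H^{-1}(S,M)\simeq H^1(S,M^0)^{\vee}$ and self-duality of permutation lattices), the immediate implication $(a)\Rightarrow(c)$, the compatibility of flabby classes with restriction together with $\fn{Res}_H I_G^0\simeq I_H^0\oplus \bm{Z}[H]^{([G:H]-1)}$, the trace argument over $\bm{Z}_p$ reducing invertibility to Sylow subgroups, and the reduction of a non-cyclic $p$-group to the critical subgroups $C_p\times C_p$ and $Q_8$. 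Even your unproved assertion that invertibility is a local property is true, but it is itself a lemma needing an argument --- for instance via the characterization that $M$ is invertible if and only if $\fn{Ext}^1_{\bm{Z}[G]}(M,C)=0$ for every coflabby $C$ (the ``only if'' is the splitting lemma [Len, Proposition 1.2], the ``if'' follows by splitting a coflabby coresolution $0\to C\to P\to M\to 0$ obtained by dualizing a flabby resolution of $M^0$); since these Ext-groups are finite, the criterion localizes.

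The genuine gaps are exactly the two steps that constitute the substance of the theorem, and for one of them your proposed method cannot work. First, for $(b)\Rightarrow(a)$ you reduce to the claim that every flabby lattice over a cyclic $p$-group is invertible and then invoke ``the explicit description of the $p$-adic representations of cyclic $p$-groups.'' No such description exists: $\bm{Z}_p[C_{p^n}]$ has infinite representation type once $n\ge 3$, so no classification of its lattices is available, and the route through an explicit list is closed. The actual proofs of Endo--Miyata and Swan proceed instead by induction on $n$, exploiting the fibre-square decomposition of $\bm{Z}[C_{p^n}]$ over $\bm{Z}[\zeta_{p^n}]$ and $\bm{Z}[C_{p^{n-1}}]$ and the arithmetic of $\bm{Z}[\zeta_{p^n}]$. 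Second, for $(c)\Rightarrow(b)$ the entire content is the non-invertibility of $[I_P^0]^{fl}$ for $P=C_p\times C_p$ and $P=Q_8$, and here you offer only the intention to ``produce an invariant,'' not a construction; you acknowledge this yourself. Note moreover that the two cases are not parallel: a periodicity or cohomological-growth obstruction of the kind one extracts from the spliced sequence $0\to E^0\to P^0\to\bm{Z}[G]\to\bm{Z}\to 0$ can work for $C_p\times C_p$, whose cohomology is not periodic, but must fail as stated for $Q_8$, which has periodic cohomology --- which is precisely why the quaternion case in [EM2] requires a separate, finer argument. As it stands, your proposal is a correct road map whose two destinations are missing.
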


\begin{theorem}[{Endo and Miyata \cite[Theorem 2.3]{EM2}}] \label{t2.7}
Let $G$ be a finite group, $I_G$ be the augmentation ideal of the
group ring $\bm{Z}[G]$. Then $[I_G^0]^{fl}=0 \Leftrightarrow [I_G^0]^{fl} \in F_G$ is of finite order $\Leftrightarrow G$ is isomorphic to the cyclic group $C_n$ where $n$ is any positive
integer, or the group $\langle \rho, \sigma, \tau:
\rho^m=\sigma^n=\tau^{2^d}=1, \tau \sigma \tau^{-1}=\sigma^{-1},
\rho \sigma=\sigma \rho, \rho \tau =\tau \rho \rangle$ where $m$
and $n$ are odd integers, $n\ge 3$, $d \ge 1$ and $\gcd
\{m,n\}=1$.
\end{theorem}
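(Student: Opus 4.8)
The plan is to first rephrase the condition: since the zero class in $F_G$ is represented by a permutation lattice, $[I_G^0]^{fl}=0$ is equivalent to saying that $I_G^0$ is stably permutation, i.e.\ $I_G^0\oplus P_1\simeq P_2$ for permutation lattices $P_1,P_2$. I would identify $I_G^0$ concretely by dualizing the augmentation sequence $0\to I_G\to\bm{Z}[G]\to\bm{Z}\to 0$: applying $\fn{Hom}(-,\bm{Z})$ and using the self-duality of $\bm{Z}[G]$ yields $0\to\bm{Z}\to\bm{Z}[G]\to I_G^0\to 0$, where the left map sends $1$ to the norm element $N_G=\sum_{g\in G}g$. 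Hence $I_G^0\simeq\bm{Z}[G]/\bm{Z}\cdot N_G$, and this short exact sequence will be the basic device for all cohomological computations.

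The first genuine step is a reduction via Theorem \ref{t2.6}. Because the zero class is in particular invertible, if some Sylow subgroup of $G$ is non-cyclic then $[I_G^0]^{fl}$ is not invertible, hence certainly nonzero; and one checks directly that both families in the statement have only cyclic Sylow subgroups. So it suffices to treat the case where $G$ is a metacyclic $Z$-group $\langle a,b:a^\mu=b^\nu=1,\,bab^{-1}=a^r\rangle$, and one verifies that the two listed families are precisely the $Z$-groups in which the acting generator induces an automorphism of order $\le 2$ (i.e.\ $\pm 1$, inversion on the odd cyclic normal part, trivial on the coprime central part) on the normal cyclic subgroup, with the complementary factor a $2$-group.

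For the sufficiency direction I would exhibit, for each listed $G$, an explicit exact sequence $0\to I_G^0\to P_1\to P_2\to 0$ with $P_1,P_2$ permutation, which by Definition \ref{d2.1} gives $[I_G^0]^{fl}=[P_2]=0$. For $G=C_n$ this is classical from the structure of $\bm{Z}[C_n]$. For the family $C_m\times(C_n\rtimes C_{2^d})$ I would induct on $|G|$ using a normal cyclic subgroup $N\trianglelefteq G$: Shapiro's lemma realizes the induced pieces $\bm{Z}[G]\otimes_{\bm{Z}[N]}(-)$ as permutation lattices, and the standard sequences relating $I_G$, $\bm{Z}[G]\otimes_{\bm{Z}[N]}I_N$ and $I_{G/N}$ (after dualizing) let me splice a permutation presentation of $I_G^0$ out of those for the smaller groups. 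The inversion action is exactly what makes the relevant induced lattices $\bm{Z}[G/\langle\text{involution}\rangle]$ split off with the right multiplicities, in the spirit of the identity $\widetilde M_+\oplus\bm{Z}\simeq\bm{Z}[D_n/\langle\sigma\rangle]\oplus\bm{Z}[D_n/\langle\tau\rangle]$ recorded in the introduction.

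The necessity direction is where I expect the real difficulty. I must show that every $Z$-group $G$ outside the two families has $I_G^0$ \emph{not} stably permutation. One must first note that the obvious local invariants are blind: from the norm sequence, $\hat H^i(S,I_G^0)\cong\hat H^{i+1}(S,\bm{Z})$ for every subgroup $S$, so this depends only on $S$ and cannot distinguish the admissible actions $r\equiv\pm 1$ from the forbidden ones with $r$ of larger order modulo $\mu$. Hence the obstruction must be the strictly finer invariant detecting stable permutation-ness inside the genus, and the plan is to compute the flabby class of $I_G^0$ in $F_G$ as a sum of local class-group contributions of the orders $\bm{Z}_p[G]$ and to show that, whenever the acting generator has order $>2$ on the cyclic normal subgroup, at least one such contribution survives. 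Carrying out this genus-and-class-number bookkeeping uniformly over all competing metacyclic actions, and proving nonvanishing in each case, is the technical heart of the argument and the step I anticipate will be hardest.
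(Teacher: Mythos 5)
The paper does not prove this statement at all: Theorem \ref{t2.7} is quoted from Endo and Miyata \cite[Theorem 2.3]{EM2} and used as a black box (it enters only in the proof of Theorem \ref{t5.9}). So your proposal has to stand on its own, and it does not, for two reasons.

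The first is a genuine error in your opening reformulation, which is precisely the formulation your necessity direction relies on. It is \emph{not} true that $[I_G^0]^{fl}=0$ is equivalent to $I_G^0$ being stably permutation. By Definition \ref{d2.1} and Theorem \ref{t2.5}(1), $[I_G^0]^{fl}=0$ is equivalent to the existence of an exact sequence $0\to I_G^0\to P_1\to P_2\to 0$ with $P_1,P_2$ permutation --- the criterion you correctly invoke in the sufficiency direction --- and this is strictly weaker than $I_G^0$ itself being stably permutation. Indeed, a stably permutation lattice is coflabby (permutation lattices have vanishing $H^1$), whereas your own norm sequence $0\to\bm{Z}\to\bm{Z}[G]\to I_G^0\to 0$ gives $H^1(S,I_G^0)\simeq H^2(S,\bm{Z})\ne 0$ for every nontrivial subgroup $S$, since $\bm{Z}[G]$ is cohomologically trivial. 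Hence $I_G^0$ is not stably permutation for \emph{any} nontrivial $G$, including $G=C_n$ and the groups of the second family. So the task you set yourself for necessity (``show $I_G^0$ is not stably permutation outside the two families'') holds vacuously for all nontrivial $G$, proves nothing about $[I_G^0]^{fl}$, and, were it the right criterion, would contradict your own sufficiency direction. Second, even with the target corrected, the proposal contains no proof: the sufficiency direction is an induction plan whose splicing step is never carried out, and the necessity direction --- the actual substance of Endo--Miyata's theorem, namely that for a $Z$-group whose action on the cyclic normal subgroup has order greater than $2$ no resolution of $I_G^0$ by two permutation lattices exists --- is explicitly deferred as the part you ``anticipate will be hardest.'' That deferred step \emph{is} the theorem; identifying where the difficulty sits is not the same as resolving it, so as written you must either supply Endo--Miyata's class-group analysis or cite \cite{EM2} as the paper does.
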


\section{\boldmath Some $D_n$-lattices}

Throughout this section,
$G$ denotes the group $G=\langle\sigma,\tau:\sigma^n=\tau^2=1,\tau\sigma\tau^{-1}=\sigma^{-1}\rangle$ where $n \ge 3$ is an odd integer,
i.e.\ $G$ is the dihedral group $D_n$.
Define $H=\langle \tau \rangle$.

We will construct six $G$-lattices which will become indecomposable $G$-lattices if $n=p$ is an odd prime number.

\begin{defn} \label{d3.1}
Let $G=\langle\sigma,\tau\rangle$ be the dihedral group defined
before. Define $G$-lattices $M_+$ and $M_-$ by $M_+=\fn{Ind}^G_H
\bm{Z}$, $M_-=\fn{Ind}^G_H \bm{Z}_-$ the induced lattices where
$\bm{Z}$ and $\bm{Z}_-$ are $H$-lattices such that $\tau$ acts on
$\bm{Z}= \bm{Z}\cdot u$, $\bm{Z}_-=\bm{Z}\cdot u'$ by $\tau\cdot
u=u$, $\tau\cdot u'=-u'$ respectively (note that $u$ and $u'$ are
the generators of $\bm{Z}$ and $\bm{Z}_-$ as abelian groups). By
choosing a $\bm{Z}$-basis for $M_+$ corresponding to $\sigma^i
u\in \fn{Ind}^G_H\bm{Z}$ (where $1\le i\le n$), the actions of
$\sigma$ and $\tau$ on $M_+$ are given by the $n\times n$ integral
matrices
\[
\sigma\mapsto A=\begin{pmatrix}
0 & 0 & \cdots & 0 & 1 \\
1 & 0 & & & 0 \\
& \ddots & \ddots & & \vdots \\
& & 1 & 0 & 0 \\
& & & 1 & 0
\end{pmatrix}, \quad
\tau\mapsto B=\left(\begin{array}{@{}cccc;{3pt/2pt}c@{}}
& & & 1 & \\ & & \iddots & & \\ & 1 & & & \\ 1 & & & & \\ \hdashline[3pt/2pt] & & & & 1
\end{array}\right).
\]
Note that the action of matrices is determined on column vectors.

Similarly, for a $\bm{Z}$-basis for $M_-$ corresponding to $\sigma^i u'$,
the actions of $\sigma$ and $\tau$ are given by
\[
\sigma\mapsto A=\begin{pmatrix}
0 & 0 & \cdots & 0 & 1 \\
1 & 0 & & & 0 \\
& \ddots & \ddots & & \vdots \\
& & 1 & 0 & 0 \\
& & & 1 & 0
\end{pmatrix}, \quad
\tau\mapsto -B=\left(\begin{array}{@{}cccc;{3pt/2pt}c@{}}
& & & -1 & \\ & & \iddots & & \\ & -1 & & & \\ -1 & & & & \\ \hdashline[3pt/2pt] & & & & -1
\end{array}\right).
\]
\end{defn}

\begin{defn} \label{d3.2}
As before, $G=\langle \sigma,\tau\rangle \simeq D_n$. Let
$f(\sigma)=1+\sigma+\cdots+\sigma^{n-1}\in \bm{Z}[G]$. Since
$\tau\cdot f(\sigma)=f(\sigma)\cdot \tau$, the left
$\bm{Z}[G]$-ideal $\bm{Z}[G]\cdot f(\sigma)$ is a two-sided ideal;
as an ideal in $\bm{Z}[G]$, we denote it by $\langle
f(\sigma)\rangle$. The natural projection $\bm{Z}[G]\to
\bm{Z}[G]/\langle f(\sigma)\rangle$ induces an isomorphism of
$\bm{Z}[G]/\langle f(\sigma)\rangle$ and the twisted group ring
$\bm{Z}[\zeta_n]\circ H$ (see \cite[p.589]{CR1}). Explicitly, let
$\zeta_n$ be a primitive $n$-th root of unity. Then
$\bm{Z}[\zeta_n]\circ H=\bm{Z}[\zeta_n]\oplus \bm{Z}[\zeta_n]\cdot
\tau$ and $\tau\cdot \zeta_n=\zeta_n^{-1}$. If $n=p$ is an odd
prime number, $\bm{Z}[\zeta_p]\circ H$ is a hereditary order
\cite[pages 593--595]{CR1}. Note that we have the following fibre
product diagram
\[
\xymatrix{\bm{Z}[G] \ar[r] \ar[d] & \bm{Z}[\zeta_n]\circ H \ar[d] \\
\bm{Z}[H] \ar[r] & \overline{\bm{Z}}[H]}
\]
where $\overline{\bm{Z}}=\bm{Z}/n\bm{Z}$ (compare with \cite[page 748, (34.43)]{CR1}).
\end{defn}

Using the $G$-lattices $M_+$ and $M_-$ in Definition \ref{d3.1},
define $N_+=\bm{Z}[G]/\langle f(\sigma)\rangle \otimes_{\bm{Z}[G]} M_+ =M_+/f(\sigma)M_+$,
$N_-=\bm{Z}[G]/\langle f(\sigma)\rangle \otimes_{\bm{Z}[G]} M_-=M_-/f(\sigma) M_-$.

The $\bm{Z}[G]/\langle f(\sigma)\rangle$-lattices $N_+$ and $N_-$ may be regarded as $G$-lattices
through the $\bm{Z}$-algebra morphism $\bm{Z}[G]\to \bm{Z}[G]/\langle f(\sigma)\rangle$.
By choosing a $\bm{Z}$-basis for $N_+$ corresponding to $\sigma^i u$ where $1\le i\le n-1$,
the actions of $\sigma$ and $\tau$ on $N_+$ are given by the $(n-1)\times (n-1)$ integral matrices
\[
\sigma\mapsto A'=\begin{pmatrix}
0 & 0 & 0 & \cdots & 0 & -1 \\
1 & 0 & & & & -1 \\
& 1 & 0 & & & -1 \\
& & & \ddots & & \vdots \\
& & & & 0 & -1 \\
& & & & 1 & -1
\end{pmatrix}, \quad
\tau\mapsto B'=\begin{pmatrix}
& & & 1 \\ & & \iddots & \\ & 1 & & \\ 1 & & &
\end{pmatrix}.
\]

Similarly, the actions of $\sigma$ and $\tau$ on $N_-$ are given by
\[
\sigma\mapsto A'=\begin{pmatrix}
0 & 0 & 0 & \cdots & 0 & -1 \\
1 & 0 & & & & -1 \\
& 1 & 0 & & & -1 \\
& & & \ddots & & \vdots \\
& & & & 0 & -1 \\
& & & & 1 & -1
\end{pmatrix}, \quad
\tau\mapsto -B'=\begin{pmatrix}
& & & -1 \\ & & \iddots & \\ & -1 & & \\ -1 & & &
\end{pmatrix}.
\]

\begin{defn} \label{d3.3}
We will use the $G$-lattices $M_+$ and $M_-$ in Definition
\ref{d3.1} to construct $G$-lattices $\widetilde{M}_+$ and
$\widetilde{M}_-$ which are of rank $n+1$ satisfying the short
exact sequences of $G$-lattices
\begin{align*}
0 &\to M_+\to \widetilde{M}_+\to \bm{Z}_-\to 0 \\
0 &\to M_-\to \widetilde{M}_-\to \bm{Z}\to 0
\end{align*}
where the $\bm{Z}$-lattice structures of $\widetilde{M}_+$ and $\widetilde{M}_-$ will be described below and $\bm{Z}=\bm{Z}\cdot w$,
$\bm{Z}_-=\bm{Z}\cdot w'$ are $G$-lattices defined by $\sigma\cdot w=w$, $\tau\cdot w=w$, $\sigma\cdot w'=w'$, $\tau\cdot w'=-w'$.
\end{defn}

Let $\{w_i=\sigma^{i+1}u:0\le i\le n-1\}$ be the corresponding $\bm{Z}$-basis of $M_+$ in Definition \ref{d3.1}.
As a free abelian group, $\widetilde{M}_+=(\bigoplus_{0\le i\le n-1} \bm{Z}\cdot w_i) \oplus \bm{Z}\cdot w$.
Define the actions of $\sigma$ and $\tau$ on $\widetilde{M}_+$ by the $(n+1)\times (n+1)$ integral matrices
\[
\sigma\mapsto \tilde{A}=\left(\begin{array}{@{\quad}c@{\quad~};{3pt/2pt}c@{}}
 & \\ A & \\  & \\ \hdashline[3pt/2pt] & 1
\end{array}\right),\quad
\tau\mapsto \tilde{B}=\left(\begin{array}{@{\quad}c@{\quad~};{3pt/2pt}c@{}}
 & 1 \\ B & \vdots \\ & 1 \\ \hdashline[3pt/2pt] & -1
\end{array}\right).
\]

Similarly, let $\{w_i^{\prime}=\sigma^{i+1}u^{\prime}:0\le i\le n-1\}$ be the $\bm{Z}$-basis of $M_-$ in Definition \ref{d3.1},
and $\widetilde{M}_-=(\bigoplus_{0\le i\le n-1} \bm{Z} w_i^{\prime})\oplus\bm{Z}\cdot w'$.
Define the actions of $\sigma$ and $\tau$ on $\widetilde{M}_-$ by
\[
\sigma\mapsto \tilde{A}=\left(\begin{array}{@{\quad}c@{\quad~};{3pt/2pt}c@{}}
 & \\ A & \\  & \\ \hdashline[3pt/2pt] & 1
\end{array}\right),\quad
\tau\mapsto -\tilde{B}=\left(\begin{array}{@{\quad}c@{\quad~};{3pt/2pt}c@{}}
 & -1 \\ -B & \vdots \\ & -1 \\ \hdashline[3pt/2pt] & 1
\end{array}\right).
\]

In this section, we will show that
$\widetilde{M}_+$ and $\widetilde{M}_-$ are stably permutation
$G$-lattices.

\begin{theorem} \label{t3.4}
Let $G=\langle\sigma,\tau:\sigma^n=\tau^2=1,\tau\sigma\tau^{-1}=\sigma^{-1}\rangle \simeq D_n$ where $n$ is an odd integer.
Then $\widetilde{M}_+\oplus \bm{Z}\simeq \bm{Z}[G/\langle\sigma\rangle]\oplus\bm{Z}[G/\langle\tau\rangle]$.
\end{theorem}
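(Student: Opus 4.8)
The plan is to reduce the identity to a statement about two short exact sequences with a common quotient, and then to exploit a pullback. First I would observe that $M_+=\fn{Ind}^G_{\langle\tau\rangle}\bm{Z}\cong\bm{Z}[G/\langle\tau\rangle]$, so the asserted isomorphism is equivalent to $\widetilde{M}_+\oplus\bm{Z}\cong M_+\oplus\bm{Z}[G/\langle\sigma\rangle]$. Alongside the defining sequence $0\to M_+\xrightarrow{\iota}\widetilde{M}_+\xrightarrow{\pi}\bm{Z}_-\to 0$ of Definition \ref{d3.3}, I would use the tautological sequence $0\to\bm{Z}\to\bm{Z}[G/\langle\sigma\rangle]\xrightarrow{p}\bm{Z}_-\to 0$, in which $\bm{Z}$ is the trivial submodule spanned by the sum $f_0+f_1$ of the two cosets $f_0=\langle\sigma\rangle$, $f_1=\tau\langle\sigma\rangle$, and $p$ sends $f_0,f_1$ to $\pm w'$.

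Next I would form the pullback $P=\widetilde{M}_+\times_{\bm{Z}_-}\bm{Z}[G/\langle\sigma\rangle]$. This produces two short exact sequences of $G$-lattices,
\[
0\to M_+\to P\to\bm{Z}[G/\langle\sigma\rangle]\to 0,\qquad 0\to\bm{Z}\to P\to\widetilde{M}_+\to 0,
\]
whose kernels are $\ker\pi=M_+$ and $\ker p=\bm{Z}$ respectively. If both sequences split, then $M_+\oplus\bm{Z}[G/\langle\sigma\rangle]\cong P\cong\widetilde{M}_+\oplus\bm{Z}$, which is exactly the claim.

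The first sequence splits easily: since $\bm{Z}[G/\langle\sigma\rangle]=\fn{Ind}^G_{\langle\sigma\rangle}\bm{Z}$, a $G$-section corresponds, by the universal property of induced lattices, to a $\langle\sigma\rangle$-fixed element of $P$ lifting the coset $f_0$. The extra generator $w\in\widetilde{M}_+$ is $\sigma$-fixed (the last column of $\tilde A$) and satisfies $\pi(w)=w'=p(f_0)$, so $(w,f_0)\in P$ provides such a lift, and the resulting $G$-map is a section. The second sequence is the crux: I would split it by exhibiting a $G$-homomorphism $\phi:\widetilde{M}_+\to\bm{Z}[G/\langle\sigma\rangle]$ with $p\circ\phi=\pi$, after which $\tilde m\mapsto(\tilde m,\phi(\tilde m))$ is the desired section. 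Since $\phi$ must carry $M_+=\ker\pi$ into $\ker p=\bm{Z}(f_0+f_1)$, $G$-equivariance forces $\phi(w_i)=e(f_0+f_1)$ for a single integer $e$ (the $\sigma$-orbit of the $w_i$ is transitive) and $\phi(w)=\alpha f_0+\beta f_1$. Imposing $p\phi(w)=\pi(w)=w'$ and compatibility with $\tau$ (recall $\tau w=\sum_i w_i-w$ from the last column of $\tilde B$) yields the linear system $\alpha-\beta=1$, $\alpha+\beta=ne$.

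I expect the splitting of the second sequence to be the main obstacle, and it is here — and only here — that the hypothesis that $n$ is odd is essential: the system is solvable over $\bm{Z}$ precisely when $ne$ is odd, so for $n$ odd one may take $e=1$, $\alpha=(n+1)/2$, $\beta=(n-1)/2$, whereas for even $n$ the parity obstruction $\alpha-\beta\equiv\alpha+\beta\pmod 2$ cannot be met. With both splittings in hand, the chain $M_+\oplus\bm{Z}[G/\langle\sigma\rangle]\cong P\cong\widetilde{M}_+\oplus\bm{Z}$ completes the proof, and re-identifying $M_+\cong\bm{Z}[G/\langle\tau\rangle]$ gives the stated form.
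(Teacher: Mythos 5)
Your proposal is correct, and it is a genuinely different argument from the one in the paper. The paper proves the isomorphism by brute force: it writes down explicit elements $t$, $x$, $y$ of $\bm{Z}[G/\langle\sigma\rangle]\oplus\bm{Z}[G/\langle\tau\rangle]$ (with $y$ involving the coefficients $\tfrac{n\pm 1}{2}$), checks that $\bigl(\bigoplus_i \bm{Z}\cdot\sigma^i(x)\bigr)\oplus\bm{Z}\cdot y\simeq\widetilde{M}_+$ and $\bm{Z}\cdot t\simeq\bm{Z}$, and then verifies by a determinant computation ($\det T=1$ after column operations) that these $n+2$ elements form a $\bm{Z}$-basis of the right-hand side. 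You instead pull back the defining extension $0\to M_+\to\widetilde{M}_+\to\bm{Z}_-\to 0$ against the tautological extension $0\to\bm{Z}\to\bm{Z}[G/\langle\sigma\rangle]\to\bm{Z}_-\to 0$ and split both projections out of the pullback $P$: the splitting of $0\to M_+\to P\to\bm{Z}[G/\langle\sigma\rangle]\to 0$ via the $\sigma$-fixed lift $(w,f_0)$ and Frobenius reciprocity is valid for all $n$, and the splitting of $0\to\bm{Z}\to P\to\widetilde{M}_+\to 0$ reduces, exactly as you say, to the solvability of $\alpha-\beta=1$, $\alpha+\beta=ne$ over $\bm{Z}$, which holds precisely when $n$ is odd (take $e=1$, $\alpha=\tfrac{n+1}{2}$, $\beta=\tfrac{n-1}{2}$); your derived conditions are both necessary and sufficient since equivariance need only be checked on the generators $\sigma,\tau$ and on a $\bm{Z}$-basis. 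What the two approaches buy is complementary: the paper's computation produces an explicit basis change realizing the isomorphism, while your pullback argument avoids all determinant work, explains structurally why the two lattices agree (both are the two extensions glued over the common quotient $\bm{Z}_-$), and isolates the hypothesis ``$n$ odd'' as a clean parity obstruction --- the same half-integers $\tfrac{n\pm1}{2}$ that the paper embeds in its element $y$ appear in your solution $(\alpha,\beta)$, making the comparison transparent.
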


\begin{proof}
Let $u_0$, $u_1$ be the $\bm{Z}$-basis of
$\bm{Z}[G/\langle\sigma\rangle]$ corresponding to the cosets $\langle \sigma \rangle$, $\tau\langle \sigma \rangle$. Then
$\sigma:u_0\mapsto u_0$, $u_1\mapsto u_1$,
$\tau:u_0\leftrightarrow u_1$.

Let $\{v_i:0\le i\le n-1\}$ be the $\bm{Z}$-basis of
$\bm{Z}[G/\langle\tau\rangle]$ correspond to $\sigma^i\langle\tau\rangle$ where
$0\le i\le n-1$. Then $\sigma: v_i\mapsto v_{i+1}$ (where the
index is understood modulo $n$), $\tau: v_i\mapsto v_{n-i}$ for
$0\le i\le n-1$.

It follows that $u_0,u_1,v_0,v_1,\ldots,v_{n-1}$ is a $\bm{Z}$-basis of $\bm{Z}[G/\langle\sigma\rangle]\oplus\bm{Z}[G/\langle\tau\rangle]$.

Define
\begin{gather*}
t = u_0+u_1+\sum_{0\le i\le n-1} v_i, \\
x = u_0+u_1+\sum_{1\le i\le n-1} v_i,\quad y= \frac{n-1}{2}u_0+\frac{n+1}{2}u_1+\frac{n-1}{2} \sum_{0\le i\le n-1}v_i.
\end{gather*}
Note that $x=t-v_0$ and $y=\frac{n-1}{2}t+u_1$.

Since $\sum_{0\le i\le n-1} \sigma^i\cdot (v_1+\cdots+v_{n-1})=(n-1)\sum_{0\le i\le n-1} v_i$,
it follows that $\tau\cdot y=-y+\sum_{0\le i\le n-1} \sigma^i (x)$. It is routine to verify that
\[
\left(\bigoplus_{0\le i\le n-1} \bm{Z}\cdot \sigma^i(x)\right) \oplus \bm{Z}\cdot y \simeq \widetilde{M}_+, \quad \bm{Z}\cdot t\simeq \bm{Z}
\]
by checking the actions of $\sigma$ and $\tau$ on lattices in both sides.

Now we will show that $\sigma(x),
\sigma^2(x),\ldots,\sigma^{n-1}(x),x,y,t$ is a $\bm{Z}$-basis of
$\bm{Z}[G/\langle\sigma\rangle]\oplus\bm{Z}[G/\langle\tau\rangle]$.
Write the determinant of these $n+2$ elements with respect to the
$\bm{Z}$-basis $u_0,u_1,v_0,v_1,\ldots,v_{n-1}$. We get the
coefficient matrix $T$ as
\begin{gather*}
\renewcommand{\arraystretch}{1.15} \normalsize
T=\left(\begin{array}{@{\,}cccc;{3pt/2pt}cc@{\,}}
1 & 1 & \cdots & 1 & \frac{n-1}{2} & 1 \\
1 & 1 & \cdots & 1 & \frac{n+1}{2} & 1 \\[1mm] \hdashline[3pt/2pt]
1 & 1 & & 0 & \frac{n-1}{2} & 1 \\
0 & 1 & & 1 & \frac{n-1}{2} & 1 \\
1 & 0 & & 1 & \frac{n-1}{2} & 1 \\
\vdots & \vdots & & \vdots & \vdots & \vdots \\
1 & 1 & & 1 & \frac{n-1}{2} & 1
\end{array}\right) \hspace*{-44mm}
\begin{minipage}[c][0mm]{56mm}
$\begin{blockarray}{cl}
~ & ~ \\[19mm]
\begin{block}{c\}l}
~ & \\ & \\ & \text{$n$ rows} \\[2pt] & \\ & \\
\end{block}
~ & ~ \\[-10mm] \underbrace{\hspace*{20mm}}_{\text{\normalsize$n$ columns}}\hspace*{21mm} &
\end{blockarray}$
\end{minipage} \\[1mm]
\end{gather*}

The determinant of $T$ may be calculated as follows: Subtract the
last column from each of the first $n$ columns. Also subtract
$\frac{n-1}{2}$ times of the last column from the $(n+1)$-th
column. Then it is easy to see $\det (T)= 1$.
\end{proof}

\begin{theorem} \label{t3.5}
Let $G=\langle\sigma,\tau:\sigma^n=\tau^2=1,\tau\sigma\tau^{-1}=\sigma^{-1}\rangle\simeq D_n$ where $n$ is an odd integer.
Then $\widetilde{M}_-\oplus\bm{Z}[G/\langle\tau\rangle]\simeq \bm{Z}[G]\oplus\bm{Z}$.
\end{theorem}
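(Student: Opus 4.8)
The plan is to realize both sides inside the ambient permutation lattice $\bm{Z}[G]\oplus\bm{Z}$ and to obtain the isomorphism from a short exact sequence together with one cohomology computation, rather than from a determinant. First I note that the summand $\bm{Z}[G/\langle\tau\rangle]$ is exactly $M_+=\fn{Ind}^G_{\langle\tau\rangle}\bm{Z}$ (since $H=\langle\tau\rangle$), so the assertion reads $\widetilde{M}_-\oplus M_+\simeq\bm{Z}[G]\oplus\bm{Z}$. I would write $a_i=\sigma^i$, $b_i=\sigma^i\tau$ ($0\le i\le n-1$) for the standard permutation basis of $\bm{Z}[G]$, so that $\sigma:a_i\mapsto a_{i+1},\ b_i\mapsto b_{i+1}$ and $\tau:a_i\mapsto b_{n-i},\ b_i\mapsto a_{n-i}$ (indices mod $n$), and let $c$ generate the trivial summand $\bm{Z}$.

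Next I would exhibit an explicit copy of $\widetilde{M}_-$ inside $V:=\bm{Z}[G]\oplus\bm{Z}$. Set $w_i=a_i-b_i=\sigma^i(1-\tau)$ for $0\le i\le n-1$; a direct check gives $\sigma\cdot w_i=w_{i+1}$ and $\tau\cdot w_i=-w_{n-i}$, so $\langle w_0,\dots,w_{n-1}\rangle$ is a copy of $M_-$. For the extension generator I take $w'=c-\sum_j b_j$; then $\sigma\cdot w'=w'$ and $\tau\cdot w'=c-\sum_j a_j=w'-\sum_i w_i$, which is precisely the action of $\tau$ on the last basis vector of $\widetilde{M}_-$ prescribed by the matrix $-\tilde{B}$ in Definition \ref{d3.3}. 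Hence $L:=\langle w_0,\dots,w_{n-1},w'\rangle$ is a sublattice of $V$ isomorphic to $\widetilde{M}_-$, and it is a $\bm{Z}$-direct summand. Computing the quotient $V/L$ (modulo $a_i\equiv b_i$ and $c\equiv\sum_j b_j$) leaves the free lattice on $\bar b_0,\dots,\bar b_{n-1}$ with $\sigma:\bar b_i\mapsto\bar b_{i+1}$ and $\tau:\bar b_i\mapsto\bar b_{n-i}$, i.e. $V/L\simeq\bm{Z}[G/\langle\tau\rangle]=M_+$. Thus I obtain a short exact sequence of $G$-lattices $0\to\widetilde{M}_-\to V\to M_+\to0$, and it remains only to split it.

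The splitting is where the real content lies. The obstruction is the class in $\fn{Ext}^1_G(M_+,\widetilde{M}_-)$, and by Shapiro's lemma (using $M_+=\fn{Ind}^G_{\langle\tau\rangle}\bm{Z}$ and the $\bm{Z}[\langle\tau\rangle]$-freeness of $\bm{Z}[G]$) this group equals $\fn{Ext}^1_{\langle\tau\rangle}(\bm{Z},\widetilde{M}_-)=H^1(\langle\tau\rangle,\widetilde{M}_-)$. I would compute the latter directly from the explicit $\tau$-action: in $\widetilde{M}_-$ both $\ker(1+\tau)$ and $\fn{im}(\tau-1)$ turn out to be spanned by $w_0$ together with the vectors $w_j+w_{n-j}$ for $1\le j\le(n-1)/2$, so they coincide and $H^1(\langle\tau\rangle,\widetilde{M}_-)=0$. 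Therefore the sequence splits, $V\simeq\widetilde{M}_-\oplus M_+$, which is the desired isomorphism $\widetilde{M}_-\oplus\bm{Z}[G/\langle\tau\rangle]\simeq\bm{Z}[G]\oplus\bm{Z}$.

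The step I expect to be the main obstacle is exactly this vanishing $H^1(\langle\tau\rangle,\widetilde{M}_-)=0$. Its point is that the ``obvious'' attempt to split — complementing $w_i=a_i-b_i$ by $p_i=a_i+b_i$ — only yields a sublattice of index $2^n$ in $\bm{Z}[G]$, so no naive $M_+$-complement can exist; what rescues the splitting is that the extension generator $w'$ must be folded into a $\tau$-fixed seed, and the sign line $\langle w_0\rangle$ (which by itself contributes the nontrivial $H^1(\langle\tau\rangle,M_-)\cong\bm{Z}/2$) is annihilated precisely by the extension class of $w'$. Here the oddness of $n$ is essential: it makes $j\mapsto n-j$ fixed-point-free on $\{1,\dots,n-1\}$, so each pair $\langle w_j,w_{n-j}\rangle$ restricts to a free $\bm{Z}[\langle\tau\rangle]$-module and their number $(n-1)/2$ is an integer. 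Equivalently, the whole argument can be packaged as the pushout $P$ of $M_-\hookrightarrow\bm{Z}[G]$ along $M_-\hookrightarrow\widetilde{M}_-$, where the complementary extension $0\to\bm{Z}[G]\to P\to\bm{Z}\to0$ splits for free since $H^1(G,\bm{Z}[G])=0$; I would adopt whichever of these two presentations reads most cleanly alongside Theorem \ref{t3.4}.
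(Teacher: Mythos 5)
Your proof is correct, and it completes the argument by a genuinely different route from the paper's. Both proofs start from essentially the same embedding: your $w_i=a_i-b_i$ are exactly the paper's $\sigma^i(x)=u_i-v_i$, and your $w'=c-\sum_j b_j$ plays the same role as the paper's $y$ (the two differ by the $G$-invariant vector $\sum_i u_i+\sum_j v_j$), so both realize $\widetilde{M}_-$ as a pure sublattice $L\subset V=\bm{Z}[G]\oplus\bm{Z}$. The paper then finishes constructively: it exhibits a second explicit family $\sigma^i(z)$ spanning a copy of $\bm{Z}[G/\langle\tau\rangle]$ and verifies that all $2n+1$ elements form a $\bm{Z}$-basis by reducing a $(2n+1)\times(2n+1)$ determinant to $\pm 1$. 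You instead identify the quotient $V/L\simeq M_+=\bm{Z}[G/\langle\tau\rangle]$ and split the extension $0\to\widetilde{M}_-\to V\to M_+\to 0$ abstractly: Eckmann--Shapiro gives $\fn{Ext}^1_{\bm{Z}[G]}(\fn{Ind}^G_{\langle\tau\rangle}\bm{Z},\widetilde{M}_-)\simeq H^1(\langle\tau\rangle,\widetilde{M}_-)$, and your computation that $\ker(1+\tau)=\fn{im}(\tau-1)$ (both spanned by $w_0$ and $w_j+w_{n-j}$) is right; one can even see it in one line, since $\fn{im}(\tau-1)$ contains $-(w_j+w_{n-j})$ and $(\tau-1)w'=-\sum_i w_i$, hence contains $w_0$. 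What each approach buys: the paper's proof is completely explicit (it hands you the basis realizing the isomorphism and uses nothing beyond determinant manipulations), while yours dispenses with guessing the complementary copy of $M_+$ and with the determinant evaluation, replacing them by a short cohomology computation; it also isolates the structural reason the splitting exists --- the class of $w'$ kills the $\bm{Z}/2$ responsible for $H^1(\langle\tau\rangle,M_-)\neq 0$, which is precisely why no complement exists inside $\bm{Z}[G]$ alone --- and it generalizes to the principle that any extension of a permutation lattice by a lattice with vanishing $H^1$ at the relevant stabilizers splits. One caveat: your closing ``pushout packaging'' is not by itself a substitute for the argument, since the splitting of $0\to\widetilde{M}_-\to P\to M_+\to 0$ still requires exactly the $H^1(\langle\tau\rangle,\widetilde{M}_-)=0$ computation; keep it as a remark on how to construct the embedding, not as the proof of the splitting.
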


\begin{proof}
The idea of the proof is similar to that of Theorem \ref{t3.4}.

Let $u_0,u_1,\ldots,u_{n-1},v_0,v_1,\ldots,v_{n-1},t$ be a $\bm{Z}$-basis of $\bm{Z}[G]\oplus\bm{Z}$ where $u_i$,
$v_j$ correspond to $\sigma^i$, $\sigma^j\tau$ in $\bm{Z}[G]$.
The actions of $\sigma$ and $\tau$ are given by
\begin{align*}
\sigma &: u_i\mapsto u_{i+1},~ v_j\mapsto v_{j+1},~ t\mapsto t, \\
\tau &: u_i\leftrightarrow v_{n-i},~ t\mapsto t
\end{align*}
where the index of $u_i$ or $v_j$ is understood modulo $n$.

Define $x,y,z\in \bm{Z}[G]\oplus \bm{Z}$ by
\begin{gather*}
x = u_0-v_0, \quad y=\left(\sum_{0\le i\le n-1} u_i\right)+t, \\
z =\left(\sum_{1\le i\le \frac{n-1}{2}} u_i\right)+\left(\sum_{\frac{n+1}{2}\le j\le n-1} v_j\right)+t.
\end{gather*}

We claim that
\[
\left(\bigoplus_{0\le i\le n-1} \bm{Z}\cdot \sigma^i(x)\right)\oplus \bm{Z}\cdot y\simeq \widetilde{M}_-, \quad
\bigoplus_{0\le i\le n-1} \bm{Z}\cdot \sigma^i(z)\simeq \bm{Z}[G/\langle\tau\rangle].
\]

Since $\tau(x)=-x$, $\tau(z)=z$,
it follows that $\tau\cdot \sigma^i(x)=-\sigma^{n-i}(x)$, $\tau\cdot\sigma^i(z)=\sigma^{n-i}(z)$.
The remaining proof is omitted.

Now we will show that
$\sigma(x),\sigma^2(x),\ldots,\sigma^{n-1}(x),
x,y,\sigma(z),\ldots,\sigma^{n-1}(z)$, $z$ form a $\bm{Z}$-basis
of $\bm{Z}[G]\oplus \bm{Z}$. Write the coefficient matrix of these
elements with respect to the $\bm{Z}$-basis
$u_0,u_1,\ldots,u_{n-1},v_0,v_1,\ldots,v_{n-1},t$. We get
$\det(T_n)$ where $T_n$ is a $(2n+1)\times (2n+1)$ integral
matrix. For example,
\[
T_3=\left(\begin{array}{@{}ccc;{3pt/2pt}c;{3pt/2pt}ccc@{}}
0 & 0 & 1 & 1 & 0 & 0 & 1 \\
1 & 0 & 0 & 1 & 1 & 0 & 0 \\
0 & 1 & 0 & 1 & 0 & 1 & 0 \\ \hdashline[3pt/2pt]
0 & 0 & -1 & 0 & 0 & 1 & 0 \\
-1 & 0 & 0 & 0 & 0 & 0 & 1 \\
0 & -1 & 0 & 0 & 1 & 0 & 0 \\ \hdashline[3pt/2pt]
0 & 0 & 0 & 1 & 1 & 1 & 1
\end{array}\right).
\]

For any $n$, we evaluate $\det (T_n)$ by adding the $i$-th row to
$(i+n)$-th row of $T_n$ for $1\le i\le n$. We find that
$\det(T_n)=\pm\det(T')$ where $T'$ is an $(n+1)\times (n+1)$
integral matrix. Note that all the entries of the $i$-th row of
$T'$ (where $1\le i\le n$) are one except one position, because of
the definition of $z$ (and those of $\sigma^i(z)$ for $1\le i\le
n-1$). Subtract the last row from the $i$-th row where $1\le i\le
n$. We find $\det(T')=\pm 1$.
\end{proof}

Suppose that $G\simeq D_n$ where $n$ is an odd integer. From Theorem \ref{t3.4} and Theorem \ref{t3.5}, we find that
\[
\widetilde{M}_+ \oplus \widetilde{M}_-
\oplus \bm{Z} \oplus \bm{Z}[G/\langle\tau\rangle]\simeq
\bm{Z}[G] \oplus \bm{Z}[G/\langle\sigma\rangle]\oplus
\bm{Z} \oplus \bm{Z}[G/\langle\tau\rangle].
\]
If the cancellation of $\bm{Z} \oplus \bm{Z}[G/\langle \tau \rangle]$ is possible, then we get the isomorphism
\[
\widetilde{M}_+\oplus \widetilde{M}_-\simeq \bm{Z}[G]\oplus \bm{Z}[G/\langle \sigma \rangle].
\]
In Theorem \ref{t3.7}, we will show that this is, indeed, the case.

We first prove Lemma \ref{l3.6}.
Define
\begin{align*}
{\rm Circ}(c_0,c_1,\ldots,c_{n-1})=
\left(
\begin{array}{@{}ccccc@{}}
 c_0 & c_{n-1} & \cdots & c_2 & c_1 \\
 c_1 & c_0 & c_{n-1} &  & c_2 \\
 \vdots & c_1 & c_0 & \ddots & \vdots \\
 c_{n-2} &  & \ddots & \ddots & c_{n-1} \\
 c_{n-1} & c_{n-2} & \cdots & c_1 & c_0
\end{array}
\right)
\end{align*}
be the $n\times n$ circulant matrix whose determinant is
\begin{align*}
\det({\rm Circ}(c_0,c_1,\ldots,c_{n-1}))
&=\prod_{k=0}^{n-1}(c_0+c_1\zeta_n^k+\cdots+c_{n-1}\zeta_n^{(n-1)k}).
\end{align*}
The determinant of the circulant matrix may be found, for examples, on \cite[page 281]{Ja}.

\begin{lemma} \label{l3.6}
Let $n\geq 3$ be an odd integer.\\
{\rm (1)}
$\displaystyle{\det({\rm Circ}(\overbrace{1,\ldots,1}^\frac{n-1}{2},
\overbrace{0,\ldots,0}^\frac{n+1}{2}))=\frac{n-1}{2}}$. \\
{\rm (2)}
$\det({\rm Circ}(\overbrace{-1,\ldots,-1}^\frac{n-1}{2},0,\overbrace{1,\ldots,1}^\frac{n-3}{2},0))=-1$.
\end{lemma}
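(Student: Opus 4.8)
The plan is to evaluate both determinants directly from the displayed product formula, writing $\lambda_k=c_0+c_1\zeta_n^k+\cdots+c_{n-1}\zeta_n^{(n-1)k}$ for the $k$-th factor and separating the $k=0$ term $\lambda_0=\sum_j c_j$ from the product over $k\neq 0$. The one arithmetic device used repeatedly is that if $\gcd(a,n)=1$ then $k\mapsto ak \pmod n$ permutes $\{1,\dots,n-1\}$, so that $\prod_{k=1}^{n-1}(1-\zeta_n^{ak})=\prod_{k=1}^{n-1}(1-\zeta_n^{k})=n$ and, more generally, any symmetric product over the nonzero residues is unchanged after reindexing $k\mapsto ak$.

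For part (1), put $m=(n-1)/2$, so that $\lambda_0=m=(n-1)/2$ and, for $k\neq 0$, $\lambda_k$ is the geometric sum $\sum_{j=0}^{m-1}\zeta_n^{jk}=(\zeta_n^{mk}-1)/(\zeta_n^{k}-1)$. Because $n$ is odd we have $2m=n-1\equiv -1\pmod n$, hence $\gcd(m,n)=1$; the substitution $k\mapsto mk$ then shows $\prod_{k=1}^{n-1}(\zeta_n^{mk}-1)=\prod_{k=1}^{n-1}(\zeta_n^{k}-1)$, so the product of the $\lambda_k$ over $k\neq 0$ telescopes to $1$ and $\det=(n-1)/2$.

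For part (2) the constant term is $\lambda_0=-\tfrac{n-1}{2}+\tfrac{n-3}{2}=-1$, so it remains to prove $\prod_{k=1}^{n-1}\lambda_k=1$. The key step is to bring $\lambda_k$ (for $k\neq 0$) into closed form: summing the two geometric blocks and simplifying with $\zeta_n^{nk}=1$, I expect
\[
\lambda_k=\frac{(1+\zeta_n^{k})\bigl(1-\zeta_n^{k(n+1)/2}\bigr)}{\zeta_n^{k}\,(\zeta_n^{k}-1)}.
\]
Taking the product over $k\neq 0$ factor by factor, I would use $\prod_{k=1}^{n-1}\zeta_n^{k}=\zeta_n^{n(n-1)/2}=1$, the standard value $\prod_{k=1}^{n-1}(\zeta_n^{k}-1)=(-1)^{n-1}n=n$, the evaluation $\prod_{k=1}^{n-1}(1+\zeta_n^{k})=1$ obtained by putting $X=-1$ in $\prod_{k=1}^{n-1}(X-\zeta_n^{k})=1+X+\cdots+X^{n-1}$ (the alternating sum of $n$ terms is $1$ since $n$ is odd), and $\prod_{k=1}^{n-1}(1-\zeta_n^{k(n+1)/2})=n$ via the permutation device, valid because $2\cdot\frac{n+1}{2}=n+1\equiv 1\pmod n$ forces $\gcd(\frac{n+1}{2},n)=1$. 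Numerator and denominator products both equal $n$, so $\prod_{k\neq 0}\lambda_k=1$ and $\det=-1$.

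The step I expect to cost the most effort is the closed-form factorization of $\lambda_k$ in part (2): unlike the clean geometric sum of part (1), the alternating $\pm1$ pattern with a gap at position $(n-1)/2$ and a trailing zero does not collapse on sight, and extracting the factors $(1+\zeta_n^{k})$ and $(1-\zeta_n^{k(n+1)/2})$ requires the right grouping together with repeated use of $\zeta_n^{nk}=1$. Once that identity is secured, the four product evaluations are routine facts about $n$-th roots of unity and the coprimality bijections, and the computation for part (1) is entirely parallel but simpler.
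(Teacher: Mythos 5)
Your proposal is correct, and it takes a genuinely different route from the paper's. The factorization you flagged as the risky step does hold: with $x=\zeta_n^k$, $x\neq 1$, one has $(x-1)\lambda_k=-(x^{(n-1)/2}-1)+(x^{n-1}-x^{(n+1)/2})=x^{-1}(1+x)-x^{(n-1)/2}(1+x)=x^{-1}(1+x)\bigl(1-x^{(n+1)/2}\bigr)$, which is exactly your closed form. The paper starts from the same circulant product formula and likewise splits off $k=0$ (in part (1) it even uses the same geometric-sum expression $\frac{1-\zeta_n^{(n-1)/2}}{1-\zeta_n}$), but it then evaluates the product over $k\neq 0$ by exhibiting each factor as the image under $\zeta_n\mapsto\zeta_n^k$ of a cyclotomic unit with an explicitly computed inverse --- namely $-\zeta_n(1+\zeta_n^{(n-1)/2})$ in part (1), and a two-case formula depending on $n\bmod 4$ in part (2) --- and concludes that the product is $1$. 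As written, that argument still needs two implicit supplements: Galois invariance, to see that the product over all $k\neq 0$ (including $k$ with $\gcd(k,n)>1$, whose factors are not conjugates of the $k=1$ factor when $n$ is composite) and the corresponding product of inverses are both rational integers, so that the product is $\pm 1$; and a conjugate-pairing positivity argument to fix the sign as $+1$. Your approach buys exactness and dispenses with all of that: the reindexing bijections $k\mapsto mk$ for $\gcd(m,n)=1$, together with the standard evaluations $\prod_{k=1}^{n-1}(1-\zeta_n^k)=n$, $\prod_{k=1}^{n-1}(1+\zeta_n^k)=1$ and $\prod_{k=1}^{n-1}\zeta_n^k=1$, compute the product on the nose, sign included; in part (1) the cancellation is even literal, since numerator and denominator run over the same multiset of factors. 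What the paper's route buys in exchange is brevity once the inverse is guessed, plus the explicit cyclotomic-unit inverses themselves, which may be of independent interest.
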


\begin{proof}
(1)
\begin{align*}
&\det({\rm Circ}(\overbrace{1,\ldots,1}^\frac{n-1}{2},
\overbrace{0,\ldots,0}^\frac{n+1}{2}))\\
&=\prod_{k=0}^{n-1}\left(1+\zeta_n^k+\cdots+\zeta_n^{\frac{n-3}{2}k}\right)\\
&=\frac{n-1}{2} \prod_{k=1}^{n-1}\left(1+\zeta_n^k+\cdots+\zeta_n^{\frac{n-3}{2}k}\right)\\
&=\frac{n-1}{2} \prod_{k=1}^{n-1}\frac{(1-(\zeta_n^k)^{\frac{n-1}{2}})}{(1-\zeta_n^k)}=\frac{n-1}{2}\\
\end{align*}
where in the $3$rd line, we apply $\sum_{i=0}^mx^i=\frac{1-x^{m+1}}{1-x}$ 
for $x=\zeta_n^k$ and $m=\frac{n-3}{2}$ and 
in the $4$th line, we note that $\frac{n-1}{2}$ and $n$ 
are relatively prime, which implies that 
$\langle\zeta_n^{\frac{n-1}{2}}\rangle=\langle\zeta_n\rangle$ and so 
\begin{align*}
\prod_{k=1}^{n-1}(1-(\zeta_n^{\frac{n-1}{2}})^k)=\prod_{k=1}^{n-1}(1-\zeta_n^k)
\end{align*} 
as required.\\ 

(2) By the same argument as in (1), we have
\begin{align*}
&\det({\rm Circ}(\overbrace{-1,\ldots,-1}^\frac{n-1}{2},0,\overbrace{1,\ldots,1}^\frac{n-3}{2},0))\\
&=\prod_{k=0}^{n-1}\left(-1-\zeta_n^k-\cdots-\zeta_n^{\frac{n-3}{2}k}+\zeta_n^{\frac{n+1}{2}k}+\cdots+\zeta_n^{(n-2)k}\right)\\
&=\left(-\frac{n-1}{2}+\frac{n-3}{2}\right)(-1)^{n-1}\prod_{k=1}^{n-1}\left(1-\zeta_n^{\frac{n+1}{2}k}+\zeta_n^k-\zeta_n^{\frac{n+3}{2}k}
+\cdots-\zeta_n^{(n-2)k}+\zeta_n^{\frac{n-3}{2}k}\right)\\
&=(-1)^n\prod_{k=1}^{n-1}\sum_{i=0}^{n-3}(-\zeta_n^{\frac{n+1}{2}k})^i\\
&=(-1)\prod_{k=1}^{n-1}\frac{(1-(-\zeta_n^{\frac{n+1}{2}k})^{n-2})}{(1-(-\zeta_n^{\frac{n+1}{2}k}))}\\
&=(-1)\prod_{k=1}^{n-1}\frac{(1+\zeta_n^{\frac{n+1}{2}(n-2)k})}{(1+\zeta_n^{\frac{n+1}{2}k})}=-1
\end{align*}
where we note that $\frac{n+1}{2}(2s)\equiv s\pmod{n}$ and 
$\frac{n+1}{2}(2s+1)\equiv(\frac{n+1}{2}+s)\pmod{n}$ for the 3rd line and 
$n-2$ is odd for the 5th line. 
We again note that $\frac{n+1}{2}$ is relatively prime to $n$ 
and also $\frac{n+1}{2}(n-2)$ is relatively prime to $n$ so 
$\langle\zeta_n^{\frac{n+1}{2}(n-2)}\rangle=\langle\zeta_n^{\frac{n+1}{2}}\rangle=\langle\zeta_n\rangle$ allows us to conclude as before that the last product is $1$.
\end{proof}

\begin{theorem} \label{t3.7}
Let $G=\langle\sigma,\tau: \sigma^n=\tau^2=1,\tau\sigma\tau^{-1}=\sigma^{-1}\rangle\simeq D_n$ where $n$ is an odd integer.
Then $\widetilde{M}_+\oplus \widetilde{M}_-\simeq \bm{Z}[G]\oplus \bm{Z}[G/\langle \sigma \rangle]$.
\end{theorem}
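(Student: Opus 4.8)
The plan is to follow the template of the proofs of Theorems \ref{t3.4} and \ref{t3.5}: fix a $\bm{Z}$-basis of the right-hand side $\bm{Z}[G]\oplus\bm{Z}[G/\langle\sigma\rangle]$, exhibit $2n+2$ explicit elements that split off a copy of $\widetilde{M}_+$ and a copy of $\widetilde{M}_-$, and finally check that these elements form a $\bm{Z}$-basis by showing that the transition matrix $T$ has $\det T=\pm1$. I would take $u_0,\dots,u_{n-1}$ (corresponding to $\sigma^i$) and $v_0,\dots,v_{n-1}$ (corresponding to $\sigma^i\tau$) as the basis of $\bm{Z}[G]$, so that $\sigma:u_i\mapsto u_{i+1},\,v_j\mapsto v_{j+1}$ and $\tau:u_i\leftrightarrow v_{n-i}$ (indices mod $n$), and $p,q$ as the basis of $\bm{Z}[G/\langle\sigma\rangle]$, with $\sigma$ fixing $p,q$ and $\tau:p\leftrightarrow q$.

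To produce $\widetilde{M}_+$ I would choose a $\tau$-fixed element $x$ whose $\sigma$-orbit $\{\sigma^i x\}_{0\le i\le n-1}$ is $\bm{Z}$-independent and satisfies $\tau\cdot\sigma^i x=\sigma^{n-i}x$; by Definition \ref{d3.1} this orbit spans a copy of $M_+$. Adjoining a $\sigma$-fixed element $y$ with $\tau\cdot y=\bigl(\sum_i\sigma^i x\bigr)-y$ realizes the extension $0\to M_+\to\widetilde{M}_+\to\bm{Z}_-\to0$ of Definition \ref{d3.3}. Symmetrically, a $\tau$-anti-fixed $x'$ (i.e.\ $\tau x'=-x'$) with independent orbit gives a copy of $M_-$, and a $\sigma$-fixed $y'$ with $\tau\cdot y'=-\bigl(\sum_i\sigma^i x'\bigr)+y'$ completes a copy of $\widetilde{M}_-$. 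Verifying that the matrices of $\sigma$ and $\tau$ on these two sublattices agree with $\tilde{A},\tilde{B}$ and $\tilde{A},-\tilde{B}$ is then routine. The delicate point is the \emph{choice} of $x,x'$: the obvious generators $x=u_0+v_0$, $x'=u_0-v_0$ do give copies of $M_+$ and $M_-$, but $M_+\oplus M_-$ sits inside $\bm{Z}[G]$ with index $2^n$, so with these generators (and the two $\sigma$-fixed correctors $y,y'$, which cannot repair a $2$-power of that size) one finds $2^n\mid\det T$; hence $\det T\ne\pm1$ and the elements fail to be a basis.

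This is exactly why Lemma \ref{l3.6} is needed, and it is the main obstacle. To kill the spurious power of $2$ I would instead take $x$ and $x'$ to be \emph{partial sums} over roughly half of the group elements — of the same shape as the element $z=\sum_{1\le i\le(n-1)/2}u_i+\sum_{(n+1)/2\le j\le n-1}v_j+t$ used in the proof of Theorem \ref{t3.5} — and let $y,y'$ involve $p,q$ so that the collection actually meets the $\bm{Z}[G/\langle\sigma\rangle]$ summand. With such generators the $\sigma$-orbit blocks of $T$ become circulant matrices, and after the reduction step of Theorem \ref{t3.5} (add the $i$-th row to the $(i+n)$-th row for $1\le i\le n$, then clear the remaining columns) $\det T$ collapses to $\pm\det({\rm Circ}(\cdots))$ for a circulant of exactly the type evaluated in Lemma \ref{l3.6}. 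For instance Lemma \ref{l3.6}(2) gives this determinant as $-1$, whence $\det T=\pm1$ and the two sublattices give the desired internal direct sum decomposition.

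As an independent check on the statement, one can also add the isomorphism of Theorem \ref{t3.4} (after forming $\oplus\,\bm{Z}$ and $\oplus\,\widetilde{M}_-$) to that of Theorem \ref{t3.5}: this yields $\widetilde{M}_+\oplus\widetilde{M}_-\oplus\bm{Z}\simeq\bm{Z}[G]\oplus\bm{Z}[G/\langle\sigma\rangle]\oplus\bm{Z}$, so the asserted isomorphism is precisely the cancellation of the trivial summand $\bm{Z}$ — which, however, does not follow formally (Krull--Schmidt--Azumaya fails for $G$-lattices) and is what the explicit construction above is designed to establish.
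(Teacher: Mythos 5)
Your strategy coincides with the paper's own: realize $\widetilde{M}_+$ and $\widetilde{M}_-$ as explicit internal sublattices of $\bm{Z}[G]\oplus\bm{Z}[G/\langle\sigma\rangle]$, each generated by the $\sigma$-orbit of a partial-sum element together with one corrector meeting the $\bm{Z}[G/\langle\sigma\rangle]$ summand, and then prove that the resulting $2n+2$ elements form a $\bm{Z}$-basis via a circulant-determinant computation resting on Lemma \ref{l3.6}. You also correctly diagnose why the naive generators $u_0+v_0$, $u_0-v_0$ fail (the index $2^n$), and your closing observation is accurate and worth keeping: Theorems \ref{t3.4} and \ref{t3.5} do give $\widetilde{M}_+\oplus\widetilde{M}_-\oplus\bm{Z}\simeq\bm{Z}[G]\oplus\bm{Z}[G/\langle\sigma\rangle]\oplus\bm{Z}$ rigorously, so the theorem is precisely a cancellation of one trivial summand, which cannot be done formally since Krull--Schmidt--Azumaya fails for $G$-lattices.

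The gap is that the decisive work is announced but not done. You never write down $x$, $x'$, $y$, $y'$; ``partial sums over roughly half of the group elements, of the same shape as $z$ in Theorem \ref{t3.5}'' is not a construction, and the whole theorem hangs on a specific choice that simultaneously (i) spans copies of $\widetilde{M}_\pm$ and (ii) has unimodular transition matrix. The choice that works is delicate: in the paper it is $x=u_0+\sum_{(n+3)/2\le i\le n-1}u_i+\sum_{2\le j\le (n+1)/2}v_j+t_0+t_1$ and an analogous $z$, with correctors $y_0,y_1$ carrying coefficients $\frac{n-1}{2}$ and $n-1$; nothing in your outline pins this down. Moreover, your prediction of how the computation ends --- that $\det T$ ``collapses to $\pm\det({\rm Circ}(\cdots))$ of exactly the type evaluated in Lemma \ref{l3.6}(2), which is $-1$'' --- is not what actually happens: after the column operations the determinant factors as a product of \emph{both} circulants of Lemma \ref{l3.6}, namely $\det({\rm Circ}(1,\ldots,1,0,\ldots,0))=\frac{n-1}{2}$ and $\det({\rm Circ}(-1,\ldots,-1,0,1,\ldots,1,0))=-1$, times a further factor $-\frac{2}{n-1}$ produced by a rational column operation, and only the product of all three equals $1$. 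So the reduction is more subtle than a single application of Lemma \ref{l3.6}(2), and without exhibiting concrete generators and carrying the computation through, you have a correct plan and a correct heuristic for why it should work, but not a proof.
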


\begin{proof}
Let $u_0,u_1,\ldots,u_{n-1},v_0,v_1,\ldots,v_{n-1}$ be
a $\bm{Z}$-basis of $\bm{Z}[G]$ where $u_i$,
$v_j$ correspond to $\sigma^i$, $\sigma^j\tau$ in $\bm{Z}[G]$.
The actions of $\sigma$ and $\tau$ are given by
\begin{align*}
\sigma &: u_i\mapsto u_{i+1},~ v_j\mapsto v_{j+1},\\
\tau &: u_i\leftrightarrow v_{n-i}
\end{align*}
where the index of $u_i$ or $v_j$ is understood modulo $n$. Let
$t_0$, $t_1$ be the $\bm{Z}$-basis of
$\bm{Z}[G]/\langle\sigma\rangle]$ correspond to the cosets $\langle \sigma \rangle$, $\tau\langle \sigma \rangle$. Then
$\sigma:t_0\mapsto t_0$, $t_1\mapsto t_1$,
$\tau:t_0\leftrightarrow t_1$.

The idea first is to find isomorphic copies of $\widetilde{M}_+$ 
and $\widetilde{M}_-$ inside $\bm{Z}[G]\oplus\bm{Z}[G/\langle\sigma\rangle]$ 
then to show that the $\bm{Z}$-basis of $\widetilde{M}_+\oplus\widetilde{M}_-$ 
is a $\bm{Z}$-basis of $\bm{Z}[G]\oplus\bm{Z}[G/\langle\sigma\rangle]$. 

To find an isomorphic copy of $\widetilde{M}_+$, 
one could find an element $x_0$ with $\tau(x_0)=x_0$ 
such that 
$\{\sigma^i x_0 : i=0,\ldots,n-1\}$ is $\bm{Z}$-linearly independent. 
Then it can easily be shown that $\bm{Z}[D_n]\cdot x_0\simeq M_+$. 
If we find $y_0$ such that $\sigma(y_0)=y_0$ and 
$\tau(y_0)=-y_0+\sum_{i=0}^{n-1}\sigma^i(x_0)$, then 
$\{\sigma^i x_0 : i=0,\ldots,n-1\}\cup\{y_0\}$ is a $\bm{Z}$-basis 
of $\widetilde{M}_+$. 
Indeed, we may take such $x_0$ and $y_0$ as 
\begin{align*}
x_0&=\sum_{i=\frac{n+1}{2}}^{n-1}u_i
+\sum_{j=1}^{\frac{n-1}{2}}v_j+t_0+t_1,\\
y_0&=\frac{n-1}{2}\left(\sum_{j=0}^{n-1} v_j\right)+t_0+(n-1)t_1.
\end{align*}

Similary, to find an isomorphic copy of $\widetilde{M}_-$, 
one could find an element $z_0$ with $\tau(z_0)=-z_0$ 
such that 
$\{\sigma^i z_0 : i=0,\ldots,n-1\}$ is $\bm{Z}$-linearly independent. 
Then it can easily be shown that $\bm{Z}[D_n]\cdot z_0\simeq M_-$. 
If we find $y_1$ such that $\sigma(y_1)=y_1$ and 
$\tau(y_1)=y_1-\sum_{i=0}^{n-1}\sigma^i(z_0)$, then 
$\{\sigma^i z_0 : i=0,\ldots,n-1\}\cup\{y_1\}$ is a $\bm{Z}$-basis 
of $\widetilde{M}_-$. 
Indeed, we may take such $z_0$ and $y_1$ as 
\begin{align*}
z_0 &=u_0+\sum_{i=\frac{n+1}{2}}^{n-1} u_i
-\sum_{j=0}^{\frac{n-1}{2}} v_j+t_0-t_1,\\
y_1&=\sum_{i=0}^{n-1}u_i
-\frac{n-1}{2}\left(\sum_{j=0}^{n-1} v_j\right)+t_0-(n-1)t_1.
\end{align*}
Then we have 
\[
\left(\bigoplus_{0\le i\le n-1} \bm{Z}\cdot \sigma^i(x_0)\right)\oplus \bm{Z}\cdot y_0\simeq \widetilde{M}_+, \quad
\left(\bigoplus_{0\le i\le n-1} \bm{Z}\cdot \sigma^i(z_0)\right)\oplus \bm{Z}\cdot y_1\simeq \widetilde{M}_-.
\]

It remains to show that $\sigma^{\frac{n-1}{2}}(x_0)$, $\ldots$,
$\sigma^{n-1}(x_0)$, $x_0$, $\sigma(x_0)$, $\ldots$,
$\sigma^{\frac{n-3}{2}}(x_0)$, $y_0$, $\sigma^{\frac{n-1}{2}}(z_0)$,
$\ldots$, $\sigma^{n-1}(z_0)$, $z_0$, $\sigma(z_0)$, $\ldots$,
$\sigma^{\frac{n-3}{2}}(z_0)$, $y_1$ form a $\bm{Z}$-basis of
$\bm{Z}[G]\oplus \bm{Z}[G/\langle\sigma\rangle]$. 

Let $Q$ be the 
coefficient matrix of $\sigma^{\frac{n-1}{2}}(x_0)$, $\ldots$,
$\sigma^{n-1}(x_0)$, $x_0$, $\sigma(x_0)$, $\ldots$,
$\sigma^{\frac{n-3}{2}}(x_0)$, $y_0$, $\sigma^{\frac{n-1}{2}}(z_0)$,
$\ldots$, $\sigma^{n-1}(z_0)$, $z_0$, $\sigma(z_0)$, $\ldots$,
$\sigma^{\frac{n-3}{2}}(z_0)$, $y_1$ with respect to the
$\bm{Z}$-basis $u_0, u_1,\ldots,u_{n-1},v_0, v_1,\ldots,v_{n-1},
t_0,t_1$.

The matrix $Q$ is defined as
\begin{align*}
Q=
\left(\begin{array}{@{}ccc;{3pt/2pt}c;{3pt/2pt}ccc;{3pt/2pt}c@{}}
 & & & 0 & & & & 1\\[-3mm]
\multicolumn{3}{c;{3pt/2pt}}{{\rm Circ}(\overbrace{1,\ldots,1}^\frac{n-1}{2},
\overbrace{0,\ldots,0}^\frac{n+1}{2})} & \vdots &
\multicolumn{3}{c;{3pt/2pt}}{{\rm Circ}(\overbrace{1,\ldots,1}^\frac{n+1}{2},
\overbrace{0,\ldots,0}^\frac{n-1}{2})} & \vdots\\[1mm]
 & & & 0 & & & & 1\\ \hdashline[3pt/2pt] & & & & & & & \\[-4mm]
 & & & \frac{n-1}{2} & & & & -\frac{n-1}{2}\\[-3mm]
\multicolumn{3}{c;{3pt/2pt}}{{\rm Circ}(\overbrace{0,\ldots,0}^\frac{n+1}{2},
\overbrace{1,\ldots,1}^\frac{n-1}{2})} & \vdots &
\multicolumn{3}{c;{3pt/2pt}}{{\rm Circ}(\overbrace{0,\ldots,0}^\frac{n-1}{2},
\overbrace{-1,\ldots,-1}^\frac{n+1}{2})} & \vdots\\[1mm]
 & & & \frac{n-1}{2} & & & & -\frac{n-1}{2}\\[1mm] \hdashline[3pt/2pt]
\multicolumn{3}{c;{3pt/2pt}}{1\qquad\quad \cdots\qquad\quad 1} & 1 & \multicolumn{3}{c;{3pt/2pt}}{1\qquad\quad \cdots\qquad\quad 1} & 1\\ \hdashline[3pt/2pt]
\multicolumn{3}{c;{3pt/2pt}}{\underbrace{1\qquad\quad \cdots\qquad\quad 1}_{n}} & n-1 & \multicolumn{3}{c;{3pt/2pt}}{\underbrace{-1\qquad\ \ \cdots\qquad\ \ -1}_{n}}  & -(n-1)
\end{array}\right)
\hspace*{-3mm}\begin{blockarray}{cl}
\begin{block}{c\}l}
 & \\[2mm] & n \\[2mm] & \\
\end{block} \\[-4mm]
\begin{block}{c\}l}
 & \\[2mm] & n \\[2mm] & \\
\end{block}
 &  \\ & \\
\end{blockarray}.
\end{align*}

For examples, when $n=3,5$, $Q$ is of the form
\[
\left(
\begin{array}{@{}ccc;{3pt/2pt}c;{3pt/2pt}ccc;{3pt/2pt}c@{}}
 1 & 0 & 0 & 0 & 1 & 0 & 1 & 1 \\
 0 & 1 & 0 & 0 & 1 & 1 & 0 & 1 \\
 0 & 0 & 1 & 0 & 0 & 1 & 1 & 1 \\\hdashline[3pt/2pt]
 0 & 1 & 0 & 1 & 0 & -1 & -1 & -1 \\
 0 & 0 & 1 & 1 & -1 & 0 & -1 & -1 \\
 1 & 0 & 0 & 1 & -1 & -1 & 0 & -1 \\\hdashline[3pt/2pt]
 1 & 1 & 1 & 1 & 1 & 1 & 1 & 1 \\\hdashline[3pt/2pt]
 1 & 1 & 1 & 2 & -1 & -1 & -1 & -2 \\
\end{array}
\right)
\]
and
\[
\left(
\begin{array}{@{}ccccc;{3pt/2pt}c;{3pt/2pt}ccccc;{3pt/2pt}c@{}}
 1 & 0 & 0 & 0 & 1 & 0 & 1 & 0 & 0 & 1 & 1 & 1 \\
 1 & 1 & 0 & 0 & 0 & 0 & 1 & 1 & 0 & 0 & 1 & 1 \\
 0 & 1 & 1 & 0 & 0 & 0 & 1 & 1 & 1 & 0 & 0 & 1 \\
 0 & 0 & 1 & 1 & 0 & 0 & 0 & 1 & 1 & 1 & 0 & 1 \\
 0 & 0 & 0 & 1 & 1 & 0 & 0 & 0 & 1 & 1 & 1 & 1 \\\hdashline[3pt/2pt]
 0 & 1 & 1 & 0 & 0 & 2 & 0 & -1 & -1 & -1 & 0 & -2 \\
 0 & 0 & 1 & 1 & 0 & 2 & 0 & 0 & -1 & -1 & -1 & -2 \\
 0 & 0 & 0 & 1 & 1 & 2 & -1 & 0 & 0 & -1 & -1 & -2 \\
 1 & 0 & 0 & 0 & 1 & 2 & -1 & -1 & 0 & 0 & -1 & -2 \\
 1 & 1 & 0 & 0 & 0 & 2 & -1 & -1 & -1 & 0 & 0 & -2 \\\hdashline[3pt/2pt]
 1 & 1 & 1 & 1 & 1 & 1 & 1 & 1 & 1 & 1 & 1 & 1 \\\hdashline[3pt/2pt]
 1 & 1 & 1 & 1 & 1 & 4 & -1 & -1 & -1 & -1 & -1 & -4 \\
\end{array}
\right).
\]

\ We will show that det$(Q)=-1$. For a given matrix, we denote by
$C(i)$ its $i$-th column. When we say that, apply $C(i)+C(1)$ on
the $i$-th column, we mean the column operation by adding the 1st
column to the $i$-th column.

On the $(2n+2)$-th column, apply $C(2n+2)+C(n+1)$. On the
$(n+1)$-th column, apply $C(n+1)+\frac{n-1}{2}(C(2n+2))$. On the
$(n+1)$-th column, apply $C(n+1)-C(1)-\cdots-C(n)$. Then all the
entries of the $(n+1)$-th column are zero except for the last
$(2n+2)$-th entry, which is $-1$. Hence it is enough to show
$\det(Q_0)=1$ where $Q_0$ is a $(2n+1)\times(2n+1)$ matrix defined
by
\begin{align*}
Q_0=\left(\begin{array}{@{}ccc;{3pt/2pt}ccc;{3pt/2pt}c@{}}
 & &  & & & & 1\\[-3mm]
\multicolumn{3}{c;{3pt/2pt}}{{\rm Circ}(\overbrace{1,\ldots,1}^\frac{n-1}{2},
\overbrace{0,\ldots,0}^\frac{n+1}{2})} &
\multicolumn{3}{c;{3pt/2pt}}{{\rm Circ}(\overbrace{1,\ldots,1}^\frac{n+1}{2},
\overbrace{0,\ldots,0}^\frac{n-1}{2})} & \vdots\\[1mm]
 & & & & & & 1\\\hdashline[3pt/2pt]
 & & & & & & 0\\[-3mm]
\multicolumn{3}{c;{3pt/2pt}}{{\rm Circ}(\overbrace{0,\ldots,0}^\frac{n+1}{2},
\overbrace{1,\ldots,1}^\frac{n-1}{2})} &
\multicolumn{3}{c;{3pt/2pt}}{{\rm Circ}(\overbrace{0,\ldots,0}^\frac{n-1}{2},
\overbrace{-1,\ldots,-1}^\frac{n+1}{2})} & \vdots\\[1mm]
 & & & & & & 0 \\\hdashline[3pt/2pt]
\multicolumn{3}{c;{3pt/2pt}}{\underbrace{1\qquad\quad\ \cdots\qquad\quad\ 1}_{n}}
 & \multicolumn{3}{c;{3pt/2pt}}{\underbrace{1\qquad\quad\ \cdots\qquad\quad\ 1}_{n}}  & 2
\end{array}\right)
\hspace*{-3mm}\begin{blockarray}{cl}
\begin{block}{c\}l}
 & \\[2mm] & n \\[2mm] & \\
\end{block}
\begin{block}{c\}l}
 & \\[2mm] & n \\[2mm] & \\
\end{block}
 &  \\
\end{blockarray}.
\end{align*}

On the $(n+i)$-th column, apply $C(n+i)+C(f(\frac{n+1}{2}+i))$ for
$i=1,\ldots n$ where
\[
f(k)=
\begin{cases}
k & k \leq n \\
k-n & k>n.
\end{cases}
\]
On the $(n+i)$-th column, apply $C(n+i)-C(2n+1)$ for
$i=1,\ldots,n$. On the $(2n+1)$-th column, apply
$C(2n+1)-\frac{2}{n-1}\{C(1)+\cdots+C(n)\}-\{C(n+1)+\cdots+C(2n)\}$. 
Thus we get $\det(Q_0)=\det(Q_1)$ where
\begin{align*}
Q_1=\left(\begin{array}{@{}ccc;{3pt/2pt}ccc;{3pt/2pt}c@{}}
 & &  & & & & 0\\[-3mm]
\multicolumn{3}{c;{3pt/2pt}}{{\rm Circ}(\overbrace{1,\ldots,1}^\frac{n-1}{2},
\overbrace{0,\ldots,0}^\frac{n+1}{2})} &
&  & \mathbf{O} &\vdots\\[1mm]
 & & & & & & 0\\\hdashline[3pt/2pt]
 & & & & & & 0\\[-3mm]
\multicolumn{3}{c;{3pt/2pt}}{{\rm Circ}(\overbrace{0,\ldots,0}^\frac{n+1}{2},
\overbrace{1,\ldots,1}^\frac{n-1}{2})} &
\multicolumn{3}{c;{3pt/2pt}}{{\rm Circ}(0,\overbrace{1,\ldots,1}^\frac{n-3}{2},0,
\overbrace{-1,\ldots,-1}^\frac{n-1}{2})} & \vdots\\[1mm]
 & & & & & & 0 \\\hdashline[3pt/2pt]
\multicolumn{3}{c;{3pt/2pt}}{\underbrace{1\qquad\quad \cdots\qquad\quad 1}_{n}}
 & \multicolumn{3}{c;{3pt/2pt}}{\underbrace{0\qquad\qquad \cdots\qquad\qquad 0}_{n}}  & -\frac{2}{n-1}
\end{array}\right)
\hspace*{-3mm}\begin{blockarray}{cl}
\begin{block}{c\}l}
 & \\[2mm] & n \\[2mm] & \\
\end{block}
\begin{block}{c\}l}
 & \\[2mm] & n \\[2mm] & \\
\end{block}
 &  \\
\end{blockarray}.
\end{align*}
Because of Lemma \ref{l3.6} and
\[
\det({\rm Circ}(0,\overbrace{1,\ldots,1}^\frac{n-3}{2},0,
\overbrace{-1,\ldots,-1}^\frac{n-1}{2})) =\det({\rm
Circ}(\overbrace{-1,\ldots,-1}^\frac{n-1}{2},0,\overbrace{1,\ldots,1}^\frac{n-3}{2},0)),
\] we find $\det(Q_1)=\frac{n-1}{2}\cdot(-1)\cdot(-\frac{2}{n-1})=1$.
\end{proof}

\begin{lemma} \label{l4.4}
Let $M_+$, $M_-$, $N_+$, $N_-$, $\bm{Z}$, $\bm{Z}_-$ be $G$-lattices with
$G=\langle\sigma,\tau:\sigma^n=\tau^2=1,
\tau\sigma\tau^{-1}=\sigma^{-1}\rangle\simeq D_n$ in Definition \ref{d3.1}, Definition \ref{d3.2} and  Definition \ref{d3.3} where $n$ is an
odd integer. Then there are non-split exact sequences of
$G$-lattices $0\to N_-\to M_+\to \bm{Z}\to 0$ and $0\to N_+\to M_-\to
\bm{Z}_-\to 0$.
\end{lemma}

\begin{proof}
\begin{Case}{1} $M_+$. \end{Case}

By definition, choose a $\bm{Z}$-basis $\{x_i:0\le i\le n-1\}$ of
$M_+$ such that $\sigma: x_i\mapsto x_{i+1}$,
$\tau:x_i\leftrightarrow x_{n-i}$ where the index is understood
modulo $n$.

Define $u_0=x_{\frac{n-1}{2}}-x_{\frac{n+1}{2}}$,
$t=x_{\frac{n-1}{2}}$, $u_i=\sigma^i(u_0)$ for $0\le i\le n-1$.

It follows that $\sum_{0\le i\le n-1} u_i=0$ and
$\{u_1,u_2,\ldots,u_{n-1},t\}$ is a $\bm{Z}$-basis of $M_+$ with
$\sigma$ and $\tau$ acting by
\begin{align*}
\sigma :{}& u_1\mapsto u_2\mapsto \cdots \mapsto u_{n-1}\mapsto u_0=-(u_1+u_2+\cdots+u_{n-1}), \\
& t\mapsto t+u_1+u_2+\cdots+u_{n-1}, \\
\tau :{}& u_i\leftrightarrow-u_{n-i},~ t\mapsto
t+u_1+u_2+\cdots+u_{n-1}.
\end{align*}

Note that $\sum_{1\le i\le n-1} \bm{Z}\cdot u_i\simeq N_-$ and
$M_+/(\sum_{1\le i\le n-1} \bm{Z}\cdot u_i)\simeq \bm{Z}$. Hence
we get the sequence $0\to N_-\to M_+\to \bm{Z} \to 0$.

We claim that this sequence doesn't split. Otherwise, $M_+ \simeq N_- \oplus \bm{Z}$. Since both $M_+$ and $\bm{Z}$ are permutation lattices, it follows that $H^1(S, N_-)=0$ for any subgroup $S$ of $G$. On the other hand, it is not difficult to see that $H^1(\langle \sigma \rangle, N_-)=\bm{Z}/n \bm{Z}$. Thus we find a contradiction.

\begin{Case}{2} $M_-$. \end{Case}

The proof is similar to Case 1. Choose a $\bm{Z}$-basis $\{x_i:
0\le i\le n-1\}$ of $M_-$ with $\sigma: x_i\mapsto x_{i+1}$,
$\tau: x_i\mapsto -x_{n-i}$.

Define $u_0=x_{\frac{n-1}{2}}-x_{\frac{n+1}{2}}$,
$t=x_{\frac{n-1}{2}}$, $u_i=\sigma^i (u_0)$ for $0\le i\le n-1$.
We find that
\begin{align*}
\sigma :{}& u_1\mapsto u_2\mapsto \cdots \mapsto u_{n-1}\mapsto -(u_1+u_2+\cdots+u_{n-1}), \\
& t\mapsto t+u_1+u_2+\cdots+u_{n-1}, \\
\tau :{}& u_i\leftrightarrow u_{n-i},~ t\mapsto
-t-u_1-u_2-\cdots-u_{n-1}.
\end{align*}

Thus $\sum_{0\le i\le n-1} \bm{Z}\cdot u_i \simeq N_+$ and
$M_-/(\sum_{1\le i\le n-1} \bm{Z}u_i)\simeq \bm{Z}_-$.

Again, the sequence $0\to N_+\to M_-\to \bm{Z}_-\to 0$ doesn't
split. Otherwise, there is some element $s\in M_-$ satisfying that $\sigma(s)=s$, $\tau(s)=-s$ and $M_-=N_+ \oplus \bm{Z}s$. A straightforward computation shows that it is necessary that $s$ is of the form $b(u_1+2u_2+\cdots +iu_i+\cdots +(n-1)u_{n-1}+nt)$ where $b$ is some integer. In that case, $N_+ + \bm{Z}s$ is a proper sublattice of $M_-$. Hence we get a contradiction.

\end{proof}

\begin{lemma} \label{l4.5}
Let $N_+$, $N_-$, $\widetilde{M}_+$, $\widetilde{M}_-$, $\bm{Z}$, $\bm{Z}_-$ be
$G$-lattices with
$G=\langle\sigma,\tau:\sigma^n=\tau^2=1,\tau\sigma\tau^{-1}=\sigma^{-1}\rangle
\simeq D_n$ in Definition \ref{d3.1}, Definition \ref{d3.2} and  Definition \ref{d3.3} where $n$ is an odd integer. Then there are non-split
exact sequences of $G$-lattices $0\to N_-\to
\widetilde{M}_+\to \bm{Z}[G/\langle\sigma\rangle]\to 0$ and $0\to N_+\to \widetilde{M}_- \to
\bm{Z}[G/\langle\sigma\rangle]\to 0$.

Moreover, the sequences $0\to N_-\to M_+\to \bm{Z}\to 0$ and $0\to N_+\to M_-\to \bm{Z}_-\to 0$ in Lemma \ref{l4.4} are the pull-backs of the sequences  $0\to N_-\to
\widetilde{M}_+\to \bm{Z}[G/\langle\sigma\rangle]\to 0$ and $0\to N_+\to \widetilde{M}_- \to
\bm{Z}[G/\langle\sigma\rangle]\to 0$ through the map $\alpha: \bm{Z} \to \bm{Z}[G/\langle\sigma\rangle]$ and the map $\alpha_-: \bm{Z_-} \to \bm{Z}[G/\langle\sigma\rangle]$ respectively.
\end{lemma}

\begin{proof}
\begin{Case}{1} $\widetilde{M}_+$. \end{Case}

We adopt the same notations $x_0,x_1,\ldots,x_{n-1},u_1,\ldots,u_{n-1}$ in the proof of Lemma \ref{l4.4}.
Write $\widetilde{M}_+=(\bigoplus_{0\le i\le n-1} \bm{Z}\cdot x_i)\oplus \bm{Z}\cdot w$ with
\begin{align*}
\sigma &: x_i\mapsto x_{i+1}, ~ w\mapsto w, \\
\tau &: x_i \mapsto x_{n-i}, ~ w\mapsto -w+x_0+x_1+\cdots+x_{n-1}
\end{align*}
where the index is understood modulo $n$.

Define $u_0=x_{\frac{n-1}{2}} - x_{\frac{n+1}{2}}$, $t=x_{\frac{n-1}{2}}$, $u_i=\sigma^i (u_0)$ for $0\le i\le n-1$.

We claim that
\begin{align*}
\sum_{0\le i\le n-1} x_i=nt-(n-1)u_0-(n-2)u_1-\cdots-u_{n-2}.
\end{align*}

Since $x_{\frac{n-1}{2}}=t$, $x_{\frac{n+1}{2}}=t-u_0$, we find
that $x_{\frac{n+3}{2}}=x_{\frac{n+1}{2}}-u_1=t-u_0-u_1$. By
induction, we may find similar formulae for $x_{\frac{n+5}{2}},
\ldots,x_{n-1},x_0,\ldots,x_{\frac{n-3}{2}}$. In particular,
$x_{\frac{n-3}{2}}=t-u_0-u_1-\cdots -u_{n-2}$. Thus the formula of
$\sum_{0\le i\le n-1} x_i$ is found.

Note that $\{u_1,\ldots,u_{n-1},t,w\}$ is a $\bm{Z}$-basis of $\widetilde{M}_+$ and
\begin{align*}
\sigma &: u_1\mapsto u_2\mapsto \cdots \mapsto u_{n-1}\mapsto -(u_1+u_2+\cdots+u_{n-1}), ~ t\mapsto t+\sum_{1\le i\le n-1} u_i,~ w\mapsto w, \\
\tau &: u_i\mapsto -u_{n-i},~ t\mapsto t+\sum_{1\le i\le n-1} u_i,~ w\mapsto -w+u_1+2u_2+\cdots+(n-1)u_{n-1}+nt.
\end{align*}

Since $\sigma$ acts trivially on $\widetilde{M}_+/N_- =\bm{Z}\bar{t}\oplus\bm{Z}\bar{w}$, the $G$-lattice $\widetilde{M}_+/N_-$ may be regarded as a $\bar{G}$-lattice where $\bar{G}=G/\langle \sigma \rangle \simeq \langle \tau \rangle \simeq C_2$.

Define $w_0=-\frac{n-1}{2}t+w$, $w_1=\frac{n+1}{2}t-w$.
Then $\{u_1,\ldots,u_{n-1},w_0,w_1\}$ is also a $\bm{Z}$-basis of $\widetilde{M}_+$ and $\widetilde{M}_+/N_- =\bm{Z}\bar{w}_0\oplus\bm{Z}\bar{w}_1$. It is routine to verify that $\tau: \bar{w}_0\mapsto \bar{w}_1 \mapsto \bar{w}_0$. Thus the $\bar{G}$-lattice $\widetilde{M}_+/N_-$ is isomorphic to $\bm{Z}[\bar{G}]$. The exact sequence $0\to N_-\to
\widetilde{M}_+\to \bm{Z}[G/\langle\sigma\rangle]\to 0$ is found.

\medskip
It is easy to verify that $0\to N_-\to M_+\to \bm{Z}\to 0$ is the pull-back of the sequences $0\to N_-\to
\widetilde{M}_+\to \bm{Z}[G/\langle\sigma\rangle]\to 0$ through the map $\alpha: \bm{Z} \to \bm{Z}[G/\langle\sigma\rangle]$ defined by $\alpha(w)=1+\tau \in \bm{Z}[G/\langle\sigma\rangle]$ where $\bm{Z}=\bm{Z}\cdot w$.

Since the pull-back sequence $0\to N_-\to M_+\to \bm{Z}\to 0$ doesn't split, the source sequence $0\to N_-\to
\widetilde{M}_+\to \bm{Z}[G/\langle\sigma\rangle]\to 0$ will not split (consider the map of the extension groups induced by $\alpha$).

\medskip
\begin{Case}{2} $\widetilde{M}_-$. \end{Case}

The proof is similar.
We adopt the notations $x_0,x_1,\ldots,x_{n-1},u_1,\ldots,u_{n-1}$ in the proof of Case 1.
Write $\widetilde{M}_- =(\bigoplus_{0\le i\le n-1} \bm{Z}\cdot x_i)\oplus \bm{Z}\cdot w$ such that
\begin{align*}
\sigma &: x_i\mapsto x_{i+1},~ w\mapsto w, \\
\tau &: x_i\mapsto -x_{n-i},~ w\mapsto w-\sum_{0\le i\le n-1} x_i
\end{align*}
where $0\le i\le n-1$ and the index is understood modulo $n$.

Define $u_0=x_{\frac{n-1}{2}}-x_{\frac{n+1}{2}}$, $t=x_{\frac{n-1}{2}}$, $u_i=\sigma^i(u_0)$ for $0\le i\le n-1$.
Then $\{u_1,\ldots,u_{n-1},t,w\}$ is a $\bm{Z}$-basis of $\widetilde{M}_-$ and
\begin{align*}
\sigma:{}& u_1\mapsto u_2\mapsto\cdots\mapsto u_{n-1}\mapsto -(u_1{+}\cdots{+}u_{n-1}),\\
& t\mapsto t+u_1+u_2+\cdots+u_{n-1},\, w\mapsto w, \\
\tau:{}& u_i\mapsto u_{n-i},~ t\mapsto -t-\left(\sum_{1\le i\le n-1} u_i\right),\\
& w\mapsto w-u_1-2u_2-\cdots -(n-1)u_{n-1}-nt.
\end{align*}

Consider $\widetilde{M}_-/N_+$ and define $w_0=\frac{n-1}{2}t-w$, $w_1=\frac{n+1}{2}t-w$. The remaining proof is similar to {Case}{1} and is omitted.
\end{proof}

\begin{lemma} \label{l4.6}
Let $N_+$, $N_-$ be $G$-lattices in Definition \ref{d3.2} with
$G=\langle\sigma,\tau:\sigma^n=\tau^2=1,\tau\sigma\tau^{-1}=\sigma^{-1}\rangle
\simeq D_n$ where $n$ is an odd integer. Then there is a non-split
exact sequences of $G$-lattices $0\to N_+ \oplus N_- \to
\bm{Z}[G]\to \bm{Z}[G/\langle\sigma\rangle] \to 0$.
\end{lemma}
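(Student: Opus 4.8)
The plan is to realize $\bm{Z}[G]$ as a non-split extension of the rank-two permutation lattice $\bm{Z}[G/\langle\sigma\rangle]=\bm{Z}[H]$ (the cokernel occurring in the definition of $Y_2$) by $N_+\oplus N_-$, and then, when $n=p$, to identify this extension with $Y_2$ by combining Lee's uniqueness with the indecomposability of $\bm{Z}[G]$. Writing $u_i=\sigma^i$ and $v_j=\sigma^j\tau$ for the standard $\bm{Z}$-basis of $\bm{Z}[G]$, I would begin with the natural surjection $\pi\colon\bm{Z}[G]\to\bm{Z}[G/\langle\sigma\rangle]$ sending each group element to its coset; its kernel is $K=(\sigma-1)\bm{Z}[G]$, of rank $2n-2$, with $\bm{Z}$-basis $\{u_{i-1}-u_i,\ v_{j-1}-v_j:1\le i,j\le n-1\}$.

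The first substantive step is to produce a $\bm{Z}$-basis of $K$ on which $\sigma$ and $\tau$ act by the matrices defining $N_+\oplus N_-$. Following the bookkeeping style of Theorems \ref{t3.4}, \ref{t3.5} and \ref{t3.7}, I would take the $\tau$-diagonal sublattice spanned by the vectors $(u_{i-1}-u_i)+(v_{i-1}-v_i)$, which is $\sigma$- and $\tau$-stable and isomorphic to $N_-$ (it is the intersection $M_+\cap K$, i.e.\ the kernel of the augmentation $M_+\to\bm{Z}$ of Lemma \ref{l4.4}), and then exhibit an explicit complementary $\sigma,\tau$-stable copy of $N_+$ (for instance, when $n=3$, the sublattice generated by $(u_0-u_1)-(v_1-v_2)$ and its $\sigma$-image; together with the $N_-$-generators this has change-of-basis determinant $1$). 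Checking that these $2(n-1)$ vectors form a $\bm{Z}$-basis of $K$ for every odd $n$ reduces, exactly as in \S3, to a single determinant evaluation. I expect confirming that the chosen complement closes up correctly in general to be the most laborious part. This yields the exact sequence $0\to N_+\oplus N_-\to\bm{Z}[G]\xrightarrow{\pi}\bm{Z}[G/\langle\sigma\rangle]\to0$.

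Non-splitness is then uniform in $n$ and immediate: an element of $\bm{Z}[G]$ is fixed by $\sigma$ precisely when it lies in $\bm{Z}\cdot f(\sigma)\oplus\bm{Z}\cdot f(\sigma)\tau$, and $\pi$ carries this rank-two sublattice onto $n\,\bm{Z}[G/\langle\sigma\rangle]$, of index $n^2$. Any $G$-section $s$ of $\pi$ would have $\sigma$-invariant image (since $\sigma$ acts trivially on $\bm{Z}[G/\langle\sigma\rangle]$), hence image inside $\bm{Z}\cdot f(\sigma)\oplus\bm{Z}\cdot f(\sigma)\tau$, so that $\pi\circ s$ could not be the identity; thus the sequence does not split.

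Finally, when $n=p$ is an odd prime, Lemma \ref{l4.3} gives $N_+\simeq R$ and $N_-\simeq P$, so the sequence becomes a non-split extension $0\to R\oplus P\to\bm{Z}[G]\to\bm{Z}[H]\to0$ lying in the principal ideal class. By Theorem \ref{t4.1} and the remark following it, the only indecomposable lattice arising from such an extension is $Y_2=(Y_2)_{\c{A}}$ with $\c{A}$ principal, so it remains to prove that $\bm{Z}[G]$ is indecomposable. For this I would use the fibre-product description of $\bm{Z}[D_p]$ in Definition \ref{d3.2}: an idempotent of $\bm{Z}[G]$ is a pair of idempotents of $\bm{Z}[\zeta_p]\circ H$ and $\bm{Z}[H]$ having the same image in $\overline{\bm{Z}}[H]=\bm{F}_p[H]$. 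Since $\bm{Z}[H]=\bm{Z}[C_2]$ has only the idempotents $0,1$, the first component must reduce to $0$ or $1$ in $\bm{F}_p[H]$; but the kernel of $\bm{Z}[\zeta_p]\circ H\to\bm{F}_p[H]$ is $(1-\zeta_p)\,(\bm{Z}[\zeta_p]\circ H)$, which maps into the Jacobson radical of the $p$-adic completion and hence contains no nonzero idempotent. Therefore $\bm{Z}[G]$ has only the trivial idempotents, is indecomposable, and so coincides with $Y_2$. I expect this indecomposability argument to be the conceptual crux of the proof.
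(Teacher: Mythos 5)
Your proposal is correct, and its skeleton is the same as the paper's: the paper also starts from the natural projection $\varphi\colon\bm{Z}[G]\to\bm{Z}[H]$ (you rightly took the cokernel to be $\bm{Z}[G/\langle\sigma\rangle]\simeq\bm{Z}[H]$; the $\bm{Z}[G/\langle\tau\rangle]$ printed in the statement is a typo, as a rank count shows), exhibits inside $\ker\varphi$ explicit $\sigma,\tau$-stable copies of $N_+$ and $N_-$ built out of the differences $\sigma^{i-1}-\sigma^i$ and $\sigma^{j-1}\tau-\sigma^j\tau$, checks by a determinant evaluation that together they give a $\bm{Z}$-basis of $\ker\varphi$, and then quotes Lee's uniqueness of the indecomposable lattice arising from non-split extensions of $\bm{Z}[H]$ by $R\oplus P$. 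Three points of comparison. First, where the paper carries out the basis verification for all odd $n$ (its Steps 3--6, with a slightly different, but equivalent, pair of shifted difference vectors), you exhibit the complementary copy of $N_+$ only for $n=3$ and defer the general case; this is the one real omission in your write-up, but it is fillable exactly as you predict: taking $z_i=(u_{i-1}-u_i)-(v_i-v_{i+1})$ with indices mod $n$, one checks $\sigma(z_i)=z_{i+1}$, $\tau(z_i)=z_{n-i}$, $\sum_{i}z_i=0$, and the change-of-basis matrix of $\{w_i,z_i\}$ relative to $\{u_{i-1}-u_i,\,v_{j-1}-v_j\}$ has block form $\bigl(\begin{smallmatrix}I&I\\I&D\end{smallmatrix}\bigr)$, so its determinant is $\det(D-I)$, which equals $1$ for every odd $n$ by an expansion no harder than the paper's. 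Second, your non-splitness argument genuinely differs from the paper's and is more self-contained: the paper deduces non-splitness from the (merely asserted) indecomposability of $\bm{Z}[G]$, whereas you observe directly that the $\sigma$-fixed sublattice $\bm{Z}\,f(\sigma)\oplus\bm{Z}\,f(\sigma)\tau$ is carried by $\pi$ into $n\bm{Z}[H]$, so no $G$-section can exist. Third, you actually prove that $\bm{Z}[D_p]$ is indecomposable, via idempotents in the fibre product of Definition \ref{d3.2} (the $\bm{Z}[C_2]$-component forces the idempotent to be congruent to $0$ or $1$ modulo the kernel $(1-\zeta_p)\bigl(\bm{Z}[\zeta_p]\circ H\bigr)$, which lies in the radical of the $p$-adic completion and hence contains no nonzero idempotent); this argument is correct, and it supplies precisely the hypothesis needed to invoke Lee's uniqueness, which concerns indecomposable middle terms, while the paper disposes of it with a parenthetical remark. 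In sum, your route buys complete treatments of the two points the paper leaves implicit (non-splitness and indecomposability of $\bm{Z}[G]$), at the cost of leaving the general-$n$ determinant computation, which the paper does in full, as a sketch that does in fact go through.
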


\begin{proof}
This lemma was proved by Lee for the case when $n=p$ is an odd
prime number in (i) of Case 1 of \cite[pages 222--224]{Le}. There
was also a remark in the first paragraph of \cite[page 229,
Section 4]{Le}.

Here is a proof for the general case when $n$ is an odd integer.

\bigskip
Step 1. Let $\{\sigma^i,\sigma^i \tau: 0\le i\le n-1\}$ be a
$\bm{Z}$-basis of $\bm{Z}[G]$.

Let $\{t_0,t_1\}$ be a $\bm{Z}$-basis of $\bm{Z}[G/\langle \sigma \rangle]$ with
$\sigma(t_i)=t_i$, $\tau:t_0\leftrightarrow t_1$.

Define a $G$-lattice surjection $\varphi:\bm{Z}[G]\to \bm{Z}[G/\langle \sigma \rangle]$
by $\varphi(\sigma^i)=t_0$, $\varphi(\sigma^i \tau)=t_1$. Define a
$G$-lattice $M$ by $M=\fn{Ker}(\varphi)$. We will prove that $M
\simeq N_+ \oplus N_-$ (note that $\bm{Z}[G]$ is indecomposable).

Define $u_i, v_i \in M$ as follows. Define $u_0=
\sigma^{(n-1)/2}-\sigma^{(n+1)/2}$, $v_0=\sigma^{(n+1)/2}
\tau-\sigma^{(n-1)/2} \tau$, and $u_i= \sigma^i(u_0)$, $v_i=
\sigma^i(v_0)$ for $0 \le i \le n-1$.

It follows that $\sum_{0 \le i \le n-1} u_i = \sum_{0 \le i \le
n-1} v_i=0$, and $\{u_i,v_i : 1\le i\le n-1\}$ is a $\bm{Z}$-basis
of $M$. Moreover, it is easy to see that $\sigma: u_i \mapsto
u_{i+1},v_i \mapsto v_{i+1}$, $\tau: u_i \mapsto v_{n-i},v_i
\mapsto u_{n-i}$ where the index is understood modulo $n$.

\bigskip
Step 2.

Define $x_i=u_i+v_i$, $y_i=u_{i-1}-v_{i+1}$ where $0\le i \le
n-1$. Clearly $\sum_{0 \le i \le n-1}x_i=\sum_{0 \le i \le
n-1}y_i=0$. We claim that $\{x_i, y_i: 1 \le i \le n-1 \}$ is a
$\bm{Z}$-basis of $M$.

Assume the above claim. Define $M_1=\oplus_{1 \le i \le n-1}
\bm{Z} \cdot x_i$, $M_2=\oplus_{1 \le i \le n-1} \bm{Z} \cdot
y_i$. It is easy to verify that $M_1 \simeq N_+$ and $M_2 \simeq
N_-$. Hence the proof that $M \simeq N_+ \oplus N_-$ is finished.

\bigskip
Step 3.

We will prove that $\{x_i, y_i: 1 \le i \le n-1 \}$ is a
$\bm{Z}$-basis of $M$.

Let $Q$ be the coefficient matrix of $x_1, x_2,
\ldots,x_{n-1},y_1, \ldots,y_{n-1}$ with respect to the
$\bm{Z}$-basis $u_1, u_2, \ldots,u_{n-1},v_1, \ldots,v_{n-1}$. For
the sake of visual convenience, we will consider the matrix $P$
which is the transpose of $Q$. We will show that det$(P)= 1$.


The matrix $P$ is defined as
\[
P=\left(\begin{array}{@{}cccc;{3pt/2pt}ccccc@{}}
1 &  &  &  & 1 &  &  & \\
 & \ddots &  &  &  &  \ddots & & \\
 &  & \ddots &  &  &  & \ddots & \\
 &  &  & 1 &  &  & & 1 \\\hdashline[3pt/2pt]
-1 & \cdots & -1 & -1 & 0 & -1 & & \\
1 &  &  & 0 & \vdots &  & \ddots & \\
 &  \ddots &  & \vdots & 0 &  &  & -1 \\
 &   & 1 &  0 & 1 & 1 &  \cdots & 1
\end{array}\right).
\]

For examples, when $n=3, 5$, it is of the form

\begin{align*}
P&=\left(\begin{array}{@{}cc;{3pt/2pt}ccc@{}}
1 & 0 & 1 & 0\\
0 & 1 & 0 & 1\\\hdashline[3pt/2pt]
-1 & -1 & 0 & -1\\
1 &  0 & 1 & 1
\end{array}\right),\\
P&=\left(\begin{array}{@{}cccc;{3pt/2pt}ccccc@{}}
1 & 0 & 0 & 0 & 1 & 0 & 0 & 0\\
0 & 1 & 0 & 0 & 0 & 1 & 0 & 0\\
0 & 0 & 1 & 0 & 0 & 0 & 1 & 0\\
0 & 0 & 0 & 1 & 0 & 0 & 0 & 1 \\\hdashline[3pt/2pt]
-1 & -1 & -1 & -1 & 0 & -1 & 0 & 0\\
1 & 0 & 0 & 0 & 0 & 0 & -1 & 0\\
0 & 1 & 0 & 0 & 0 & 0 & 0 & -1 \\
0 & 0 & 1 &  0 & 1 & 1 &  1 & 1
\end{array}\right).
\end{align*}

In the case $n=3,5$, it is routine to show that det$(P)= 1$. When
$n \ge 7$, we will apply column operations on the matrix $P$ and
then expand the determinant along a row. Thus we are reduced to
matrices of smaller size.

For a given matrix, we denote by $C(i)$ 
its $i$-th column.
When we say that, apply $C(i)+C(1)$ on the $i$-th column, we mean
the column operation by adding the 1st column to the $i$-th
column.

\bigskip
Step 4.

We will prove det$(P)= 1$ where $P$ is the $(2n-2)\times(2n-2)$
integral matrix defined in Step 3. Suppose $n \ge 7$.

Apply column operations on the matrix $P$. On the $(n+i)$-th
column where $0 \le i \le n-2$, apply $C(n+i)-C(i+1)$.

Thus all the entries of the right upper part of the resulting
matrix vanish. We get det$(P)=$ det$(P_0)$ where $P_0$ is an
$(n-1)\times(n-1)$ integral matrix defined as

\begin{align*}
P_0&=\left(\begin{array}{@{}cccccccc@{}}
1 & 0 & 1 & 1 & 1 & 1 & \cdots & 1\\
-1 & 0 & -1 & & & & &  \\
& -1 & 0 & -1 & & & &  \\
& & -1 & 0 & -1 & & &  \\
& & & -1 & 0 & -1 & &  \\
& & &  & \ddots & \ddots & \ddots & \\
& & & &  & -1 & 0 & -1 \\
1 & 1 & 1 & \cdots& 1 & 1 & 0 & 1
\end{array}\right).
\end{align*}

\bigskip
Step 5.

Apply column operations on $P_0$. On the 3rd column, apply
$C(3)-C(1)$. Then, on the 4th column, apply $C(4)-C(2)$.

In the resulting matrix, each of the 2nd row and the 3rd row have
only one non-zero entry.

Thus det$(P_0) = $ det$(P_1)$ where $P_1$ is an $(n-3)\times(n-3)$
integral matrix defined as

\begin{align*}
P_1&=\left(\begin{array}{@{}cccccccc@{}}
0 & 1 & 1 & 1 & 1 & \cdots & 1\\
-1 & 0 & -1 & & & & &  \\
& -1 & 0 & -1 & & & &  \\
& & -1 & 0 & -1 & & &  \\
 & &  & \ddots & \ddots & \ddots & \\
& & & &  -1 & 0 & -1 \\
0 & 0 & 1 & \cdots& 1 & 0 & 1
\end{array}\right).
\end{align*}

\bigskip
Step 6.

Apply column operations on $P_1$. On the 3rd column, apply
$C(3)-C(1)$. On the 4th column, apply $C(4)-C(2)$.

Then expand the determinant along the 2nd row and the 3rd row. We
get det$(P_1) = $ det$(P_2)$ where $P_2$ is an $(n-5)\times(n-5)$
integral matrix defined as

\begin{align*}
P_2&=\left(\begin{array}{@{}ccccc@{}}
1 & 0 & 1 & \cdots & 1 \\
-1 & 0 & -1 &  &  \\
 & \ddots & \ddots & \ddots & \\
 &  & -1 & 0 & -1 \\
1 & \cdots & 1 & 0 & 1
\end{array}\right).
\end{align*}

But the matrix $P_2$ looks the same as $P_0$ except the size.
Done.

\end{proof}

The following lemma is suggested by the referee.

\begin{lemma} \label{l3.9}
Let $I_G:=\fn{Ker}\{\bm{Z}[G]\xrightarrow{\varepsilon} \bm{Z}\}$ be the augmentation ideal of $\bm{Z}[G]$ where
$\varepsilon(\sum_{g\in G} n_g\cdot g)=\sum_{g\in G} n_g$ with $G=\langle\sigma,\tau:\sigma^n=\tau^2=1,\tau\sigma\tau^{-1}=\sigma^{-1}\rangle
\simeq D_n$ and $n$ an odd integer. Let $M_+$, $M_-$, $N_-$, $\widetilde{M}_+$, $\widetilde{M}_-$ be
$G$-lattices in Definition \ref{d3.1}, Definition \ref{d3.2} and  Definition \ref{d3.3}. Then $I_G \simeq M_- \oplus N_-$ and there are exact sequences of $G$-lattices $0\to M_+\to \bm{Z}[G]\to M_- \to 0$, $0\to \widetilde{M}_+\to \bm{Z}[G]\to N_- \to 0$, $0\to M_+ \oplus \widetilde{M}_+\to \bm{Z}[G]^{(2)}\to I_G \to 0$
\end{lemma}

\begin{proof}
Step 1. Note that $G$ is a Frobenius group. Thus we may use the proof of Theorem 8.5 (iii) $\Rightarrow$ (i) in \cite[pages 57-58]{Gr1}. More explicitly, write $H=\langle \tau \rangle$. By Step 1 and Step 2 of \cite[pages 57-58]{Gr1}, the $G$-lattice $I_G$ is isomorphic to the direct sum of the $G$-lattice $I_{\bm{Z}[G/H]}=\sum_{1 \le i \le n-1}\bm{Z}\cdot (\sigma^{i-1}H-\sigma^iH)$ and the $G$-lattice $\sum_{0 \le i \le n-1}\bm{Z}\cdot \sigma^i(1-\tau)$.

On the other hand, it is easy to verify that the $G$-lattice $I_{\bm{Z}[G/H]}$ is isomorphic to $N_-$ with $\sigma^{i-1}H-\sigma^iH$ corresponding to the basis element in Definition \ref{d3.2}. Similarly, the $G$-lattice $\sum_{0 \le i \le n-1}\bm{Z}\cdot \sigma^i(1-\tau)$ is isomorphic to $M_-$ with $\sigma^i(1-\tau)$ corresponding to the basis element $x_i$ in the proof of Lemma \ref{l4.4}.

\bigskip
Step 2. For the exact sequence $0\to M_+\to \bm{Z}[G]\to M_- \to 0$, note that the $G$-sublattice $M:=\sum_{0 \le i \le n-1}\bm{Z}\cdot \sigma^i(1+\tau)$ of $\bm{Z}[G]$ is isomorphic to $M_+$ with $\sigma^i(1+\tau)$ corresponding to the basis element in Definition \ref{d3.1}. Choose $\sigma^i$ (where $0 \le i \le n-1$) to be a basis for $\bm{Z}[G]/M$. Then it is easy to verify that $\bm{Z}[G]/M$ is isomorphic to $M_-$ with $\sigma^i$ corresponding to the basis element in Definition \ref{d3.1}. Hence the result.

It remains to find the sequence $0\to \widetilde{M}_+\to \bm{Z}[G]\to N_- \to 0$. Define a sublattice $\widetilde{M}:=\sum_{0 \le i \le n-1}\bm{Z}\cdot \sigma^i(1+\tau)+\bm{Z}\cdot \sum_{0\le i \le n-1}\sigma^i\tau$ of $\bm{Z}[G]$. The lattice $\widetilde{M}$ is isomorphic to $\widetilde{M}_+$ with $\sigma^i(1+\tau)$ corresponding to the basis element $w_i$ and $\sum_{0\le i \le n-1}\sigma^i\tau$ corresponding to $w$ where $w_i$ and $w$ are the basis of $\widetilde{M}_+$ in Definition \ref{d3.3}. Choose $\sigma^i$ (where $1 \le i \le n-1$) to be a basis for $\bm{Z}[G]/\widetilde{M}$. It is not difficult to show that $\bm{Z}[G]/\widetilde{M}$ is isomorphic to $N_-$ with $\sigma^i$ corresponding to the basis element in Definition \ref{d3.2}.
\end{proof}

\section{The relation modules}

Recall the relation module $R^{ab}$ in Section 1.

\begin{lemma} \label{l5.1}
Let $G$ be a finite group, $1\to R\to F\to G\to 1$ be a free presentation of $G$ where $F$ is a free group of finite rank.
Then $R$ is a free group of finite rank and $R^{ab}$ is a faithful $G$-lattice,
i.e.\ if $g\in G$ and $g \cdot x=x$ for any $x\in R^{ab}$, then $g=1$.
\end{lemma}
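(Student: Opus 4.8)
The plan is to treat the two assertions separately; only the faithfulness requires real work.

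For the first assertion, $R$ is a subgroup of the free group $F$ of finite rank $d$, so by Nielsen--Schreier (Schreier's theorem, \cite[page 36]{Kur}) $R$ is again free; and since $[F:R]=|G|<\infty$, Schreier's index formula gives $\fn{rank} R=1+(d-1)|G|$, as already recorded in the introduction. Consequently $R^{ab}=R/[R,R]$ is free abelian of rank $1+(d-1)|G|$, and the conjugation action of $F$ on $R$ descends (inner automorphisms of $R$ act trivially on an abelianization) to a $G=F/R$-action making $R^{ab}$ a $G$-lattice.

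For the faithfulness I would use Gruenberg's exact sequence of $G$-lattices
\[
0\to R^{ab}\to \bm{Z}[G]^{d}\xrightarrow{\partial} I_G\to 0,
\]
where $I_G$ is the augmentation ideal of $\bm{Z}[G]$ and $\partial$ sends the $i$-th standard basis vector of $\bm{Z}[G]^{d}$ to $\varepsilon(s_i)-1$ (see \cite{Gr1,Gr2}). This identifies $R^{ab}$ with a $G$-submodule of the free module $\bm{Z}[G]^{d}$, the $G$-action being the restriction of the left regular action. The key point --- and the main obstacle --- is that this alone does \emph{not} yield faithfulness, since a submodule of a faithful module need not be faithful; one must exhibit the regular representation as an actual constituent of $R^{ab}$.

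To do this I would tensor the sequence with $\bm{Q}$ and compare characters. Writing $\rho$ for the character of the regular representation $\bm{Q}[G]$, additivity of characters along the short exact sequence $0\to \bm{Q}\otimes R^{ab}\to \bm{Q}[G]^{d}\to \bm{Q}\otimes I_G\to 0$ (which splits, as $\bm{Q}[G]$ is semisimple) gives, since $\bm{Q}\otimes I_G$ has character $\rho-1$,
\[
\chi_{\bm{Q}\otimes R^{ab}}=d\rho-(\rho-1)=(d-1)\rho+1 .
\]
In every free presentation considered in this paper one has $d=\fn{rank} F=2\ge 2$, so the regular representation occurs in $\bm{Q}\otimes R^{ab}$ with multiplicity $d-1\ge 1$. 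Since $\bm{Q}[G]$ is a faithful $G$-module, so is $\bm{Q}\otimes R^{ab}$; and as $R^{ab}$ embeds in $\bm{Q}\otimes R^{ab}$, any $g\in G$ fixing $R^{ab}$ pointwise fixes $\bm{Q}\otimes R^{ab}$ pointwise, forcing $g=1$. Thus $R^{ab}$ is faithful. The hypothesis $\fn{rank} F\ge 2$ is genuinely needed here: for $d=1$ the group $F\cong\bm{Z}$ is abelian and acts trivially on $R^{ab}$, so $R^{ab}$ would fail to be faithful whenever $G\ne 1$.
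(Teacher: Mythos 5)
Your proof is correct, and it does more than the paper's own proof, which is essentially a pair of citations: freeness of $R$ from the Nielsen--Schreier theorem \cite[page 36]{Kur}, and faithfulness from Passi \cite{Pa}, together with the remark that ``the faithfulness can be proved by using a theorem of Gasch\"utz'' \cite[page 8]{Gr1}. What you have done is carry out that second, alluded-to route in full: your character computation along Gruenberg's sequence $0\to R^{ab}\to \bm{Z}[G]^{(d)}\to I_G\to 0$ is precisely a proof of the rational form of Gasch\"utz's theorem, namely $\bm{Q}\otimes R^{ab}\simeq \bm{Q}\oplus\bm{Q}[G]^{(d-1)}$, and your concluding step (an element of $G$ fixing $R^{ab}$ pointwise fixes the $\bm{Q}$-span $\bm{Q}\otimes R^{ab}$, which contains the faithful regular representation once $d\ge 2$) is exactly how faithfulness is extracted from it. Your input is the same exact sequence the paper establishes in Lemma \ref{l5.2}, whose proof does not depend on Lemma \ref{l5.1}, so there is no circularity; beyond that you use only Maschke's theorem and additivity of characters, making the argument self-contained where the paper defers to the literature. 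You also correctly flag, and your diagnosis is right, that the statement as printed is slightly too generous: for $d=1$ (forcing $G$ cyclic) the group $F\simeq\bm{Z}$ is abelian, $R^{ab}\simeq\bm{Z}$ carries the trivial action, and faithfulness fails for $G\ne 1$; so the lemma implicitly assumes $\fn{rank}F\ge 2$, which is harmless for this paper (where always $d=2$) but worth making explicit. Your observation that the mere embedding $R^{ab}\hookrightarrow\bm{Z}[G]^{(d)}$ cannot give faithfulness --- a submodule of a faithful module need not be faithful --- is likewise the correct identification of where the real work lies.
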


\begin{proof}
$R$ is a free group of finite rank by the Nielsen-Schreier Theorem
\cite[page 36]{Kur}; later we will exhibit a free generating set
of $R$ as indicated in \cite[Theorem 8.1]{Ma}. Since $R$ is free,
$R^{ab}$ is a free abelian group of finite rank. Thus $R^{ab}$ is
a $G$-lattice. It is a faithful $G$-lattice by a theorem of Gasch\"utz (see
\cite[page 8]{Gr1}).
\end{proof}

\begin{lemma} \label{l5.2}
Let $G$ be a finite group, $1\to R\to F\to G\to 1$ be a free
presentation of $G$ where $F$ is a free group of rank $d$. Let
$I_G:=\fn{Ker}\{\bm{Z}[G]\xrightarrow{\varepsilon} \bm{Z}\}$ be
the augmentation ideal of $\bm{Z}[G]$ where
$\varepsilon(\sum_{g\in G} n_g\cdot g)=\sum_{g\in G} n_g$.
\begin{enumerate}
\item[{\rm (1)}] There is a short exact sequence of $G$-lattices
$0\to R^{ab} \to \bm{Z}[G]^{(d)} \to I_G \to 0$.  \item[{\rm (2)}]
Write $I_G^{\otimes 2}:=I_G\otimes_{\bm{Z}} I_G$. Then $R^{ab}
\oplus \bm{Z}[G]^{(|G|-1)} \simeq I_G^{\otimes 2} \oplus
\bm{Z}[G]^{(d)}$. \item[{\rm (3)}] Both $R^{ab}$ and $I_G^{\otimes 2}$ are coflabby and $0\to (I_G)^0 \to \bm{Z}[G]^{(|G|-1)} \to (I_G^{\otimes 2})^0\to 0$ is a flabby resolution of $(I_G)^0$.
\end{enumerate}
\end{lemma}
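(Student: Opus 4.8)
The plan is to establish (1) as the standard relation-module exact sequence and then to deduce (2) formally, by comparing two presentations of the augmentation ideal $I_G$ with free middle terms via Schanuel's lemma. Write $\pi\colon F\to G$ for the presentation map and $\mathfrak r=\fn{Ker}\{\bm{Z}[F]\to\bm{Z}[G]\}$; this is a two-sided ideal because $R$ is normal in $F$, and $\mathfrak r\subseteq I_F$ since the augmentation of $\bm{Z}[F]$ factors through $\bm{Z}[G]$.

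For (1), I would start from the fundamental theorem of the free differential calculus: $I_F$ is free as a left $\bm{Z}[F]$-module on $\{s_i-1:1\le i\le d\}$, so that $\bm{Z}[G]\otimes_{\bm{Z}[F]}I_F\simeq\bm{Z}[G]^{(d)}$, and the map $I_F\to I_G$ induced by $\pi$ becomes, after using $\bm{Z}[G]\otimes_{\bm{Z}[F]}I_G\simeq I_G$, a $\bm{Z}[G]$-linear map $\theta\colon\bm{Z}[G]^{(d)}\to I_G$ with $\theta(e_i)=\pi(s_i)-1$. Here $\theta$ is surjective because $I_G$ is generated over $\bm{Z}[G]$ by the $\pi(s_i)-1$, and applying the right-exact functor $\bm{Z}[G]\otimes_{\bm{Z}[F]}-$ to $0\to\mathfrak r\to I_F\to I_G\to 0$ shows that $\fn{Ker}\,\theta$ is exactly the $\bm{Z}[G]$-submodule generated by the images of $\mathfrak r$. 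To identify this kernel with $R^{ab}$ I would use the Fox-derivative map $\psi\colon R\to\bm{Z}[G]^{(d)}$, $\psi(r)=(\overline{\partial r/\partial s_i})_{1\le i\le d}$: the product rule for Fox derivatives, reduced modulo $\mathfrak r$ (where $\bar r=1$ for $r\in R$), shows that $\psi$ is a homomorphism annihilating $[R,R]$, that it is $G$-equivariant for the conjugation action, and that its image lands in and spans $\fn{Ker}\,\theta$; hence $\psi$ induces a surjection $R^{ab}\twoheadrightarrow\fn{Ker}\,\theta$. The remaining point is the injectivity of $\psi$ on $R^{ab}$, which is the Magnus embedding and rests on $R$ being a free group (Nielsen--Schreier, Lemma \ref{l5.1}). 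With injectivity in hand $R^{ab}\simeq\fn{Ker}\,\theta$, which is (1); alternatively the whole four-term sequence $0\to R^{ab}\to\bm{Z}[G]^{(d)}\to\bm{Z}[G]\to\bm{Z}\to 0$ is Gruenberg's relation sequence and may be cited from \cite{Gr1}.

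For (2), the plan is to exhibit a second short exact sequence with cokernel $I_G$ and a free middle term. Since $I_G$ is $\bm{Z}$-free, hence flat, tensoring the augmentation sequence $0\to I_G\to\bm{Z}[G]\to\bm{Z}\to 0$ over $\bm{Z}$ with $I_G$ (diagonal $G$-action) stays exact and yields $0\to I_G^{\otimes 2}\to\bm{Z}[G]\otimes_{\bm{Z}}I_G\to I_G\to 0$. The untwisting isomorphism $g\otimes m\mapsto g\otimes g^{-1}m$ carries the diagonal action on $\bm{Z}[G]\otimes_{\bm{Z}}I_G$ to the action on the left factor alone, identifying it with $\bm{Z}[G]^{(|G|-1)}$ because $\fn{rank}_{\bm{Z}}I_G=|G|-1$. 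Thus (1) and this new sequence are two extensions $0\to R^{ab}\to\bm{Z}[G]^{(d)}\to I_G\to 0$ and $0\to I_G^{\otimes 2}\to\bm{Z}[G]^{(|G|-1)}\to I_G\to 0$ whose middle terms are free (hence projective) $\bm{Z}[G]$-modules, so Schanuel's lemma gives at once $R^{ab}\oplus\bm{Z}[G]^{(|G|-1)}\simeq I_G^{\otimes 2}\oplus\bm{Z}[G]^{(d)}$, which is (2). As a consistency check, both sides have $\bm{Z}$-rank $|G|(|G|+d-2)+1$.

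The only genuine difficulty lies in (1), and within it in the injectivity of the comparison map $\psi\colon R^{ab}\to\bm{Z}[G]^{(d)}$; once this is secured (by freeness of $R$ and the Magnus embedding, or by quoting Gruenberg), everything in (2) is purely formal, the two mechanisms being the untwisting isomorphism for $\bm{Z}[G]\otimes_{\bm{Z}}(-)$ and Schanuel's lemma.
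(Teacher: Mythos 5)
Your proposal is correct and follows essentially the same route as the paper: part (1) is the standard relation sequence $0\to R^{ab}\to \bm{Z}[G]\otimes_{\bm{Z}[F]}I_F\to I_G\to 0$ (which the paper simply cites from Hilton--Stammbach, while you sketch its proof via Fox derivatives and the Magnus embedding), and part (2) is obtained exactly as in the paper by tensoring the augmentation sequence with $I_G$, untwisting $\bm{Z}[G]\otimes_{\bm{Z}}I_G\simeq\bm{Z}[G]^{(|G|-1)}$, and applying Schanuel's lemma to the two sequences with free middle terms.
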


\begin{proof}
(1) By \cite[page 199, Corollary 6.4]{HS}, we have a short exact sequence $0\to R^{ab} \to \bm{Z}[G]\otimes_{\bm{Z}[F]} I_F \to I_G\to 0$.
Since $I_F$ is a free $\bm{Z}[F]$-module of rank $d$ by \cite[page 196, Theorem 5.5]{HS},
we find that $\bm{Z}[G]\otimes_{\bm{Z}[F]} I_F \simeq \bm{Z}[G]^{(d)}$.

(2) Tensor $0\to I_G \to \bm{Z}[G]\to \bm{Z}\to 0$ with $I_G$ over $\bm{Z}$.
We get $0\to I_G^{\otimes 2}\to \bm{Z}[G] \otimes_{\bm{Z}} I_G \to I_G\to 0$.
For a $\bm{Z}[G]$-module $M$, write $M_0$ the underlying abelian group of $M$;
thus $M_0$ becomes a trivial $\bm{Z}[G]$-module.
By \cite[page 212, Lemma 11.7, Corollary 11.8]{HS}, the $\bm{Z}[G]$-modules $\bm{Z}[G]\otimes_{\bm{Z}} I_G$ and $\bm{Z}[G]\otimes_{\bm{Z}} (I_G)_0$ are isomorphic.
Hence $\bm{Z}[G]\otimes_{\bm{Z}} I_G \simeq \bm{Z}[G]^{(|G|-1)}$.

In summary, we have two short exact sequences $0\to R^{ab}\to \bm{Z}[G]^{(d)} \to I_G \to 0$ and $0\to I_G^{\otimes 2} \to \bm{Z}[G]^{(|G|-1)} \to I_G\to 0$.
Apply Schanuel's Lemma.

(3) It suffices to show that $H^1(S, I_G^{\otimes 2})=0$ for any subgroup $S$ of
$G$. Use the exact sequences $0\to I_G^{\otimes 2} \to \bm{Z}[G]^{(|G|-1)} \to I_G\to 0$ (in (2)) and $0\to I_G\to \bm{Z}[G]\to \bm{Z}\to 0$. We find that $H^1(S,I_G^{\otimes 2})\simeq \widehat{H}^0
(S,I_G)\simeq H^{-1}(S,\bm{Z})=0$.
\end{proof}

\begin{remark}
{}From the above lemma, we may determine the rank of the
free abelian group $R^{ab}$. This is also the rank of $R$ as a
free group; thus it supplies a proof of Schreier's Theorem \cite[page
36]{Kur} by homological algebra.
\end{remark}

\begin{lemma} \label{l5.3}
Let $G$ be a finite group,
$1\to R_1 \to F_1 \to G\to 1$ and $1\to R_2 \to F_2 \to G\to 1$ be two free presentations of $G$ where $F_i$ is a free group of rank $d_i$ for $i=1,2$. Then $R_1^{ab}\oplus \bm{Z}[G]^{(d_2)} \simeq R_2^{ab}\oplus \bm{Z}[G]^{(d_1)}$.

\end{lemma}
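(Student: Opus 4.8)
The plan is to reduce the statement to a single application of Schanuel's Lemma. First I would invoke Lemma \ref{l5.2}(1) separately for each of the two free presentations. For $1\to R_1\to F_1\to G\to 1$ with $F_1$ of rank $d_1$ this produces a short exact sequence of $G$-lattices
\[
0\to R_1^{ab}\to \bm{Z}[G]^{(d_1)}\to I_G\to 0,
\]
and likewise for $1\to R_2\to F_2\to G\to 1$ with $F_2$ of rank $d_2$ we obtain
\[
0\to R_2^{ab}\to \bm{Z}[G]^{(d_2)}\to I_G\to 0.
\]
The crucial observation is that both sequences have the same cokernel, namely the augmentation ideal $I_G$, which is defined intrinsically from $G$ and does not depend on the chosen presentation.

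Next I would note that the middle terms $\bm{Z}[G]^{(d_1)}$ and $\bm{Z}[G]^{(d_2)}$ are free, hence projective, $\bm{Z}[G]$-modules. This places us in exactly the setting of Schanuel's Lemma: two short exact sequences $0\to K_i\to P_i\to M\to 0$ (for $i=1,2$) with each $P_i$ projective and a common right-hand term $M=I_G$. Schanuel's Lemma then yields an isomorphism of $G$-modules
\[
K_1\oplus P_2\simeq K_2\oplus P_1,
\]
which upon substituting $K_i=R_i^{ab}$ and $P_i=\bm{Z}[G]^{(d_i)}$ gives precisely
\[
R_1^{ab}\oplus \bm{Z}[G]^{(d_2)}\simeq R_2^{ab}\oplus \bm{Z}[G]^{(d_1)},
\]
as desired.

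I do not anticipate any substantive obstacle: the entire content of the lemma is already packaged into Lemma \ref{l5.2}(1), and the remaining step is a textbook invocation of Schanuel's Lemma, exactly as was carried out at the end of the proof of Lemma \ref{l5.2}(2). The only point requiring minor care is to confirm that the two sequences are genuinely built over the same cokernel $I_G$ with the standard augmentation, so that the hypotheses of Schanuel's Lemma are literally met; this is immediate from the construction in Lemma \ref{l5.2}(1), where $I_G$ depends only on $G$ and not on the presentation $F_i\to G$.
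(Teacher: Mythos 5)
Your proposal is correct and is essentially identical to the paper's own proof: both invoke Lemma \ref{l5.2}(1) to obtain the two exact sequences $0\to R_i^{ab}\to \bm{Z}[G]^{(d_i)}\to I_G\to 0$ and then conclude by Schanuel's Lemma. Your additional remarks (projectivity of the middle terms, presentation-independence of $I_G$) merely spell out the hypotheses the paper leaves implicit.
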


\begin{proof}
By Lemma \ref{l5.2} we have exact sequences of $G$-lattices $0\to R_i^{ab}\to \bm{Z}[G]^{(d_i)}\to I_G\to 0$ for $i=1,2$.
Apply Schanuel's Lemma.
\end{proof}

\begin{lemma} \label{l5.5}
Let $G=D_n$ be the dihedral group of order $2n$ where $n\ge 2$.
Let $1\to R\to F\to G\to 1$ be a free presentation where $F$ is a free group of finite rank.
\begin{enumerate}
\item[{\rm (1)}]
If $n$ is odd, then $R^{ab}$ is an invertible lattice.
\item[{\rm (2)}]
If $n$ is even, then $R^{ab}$ is coflabby, but is not flabby.
\end{enumerate}
\end{lemma}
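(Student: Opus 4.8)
The plan is to handle the two cases with the homological machinery already in place, the key preliminary observation being that by Lemma \ref{l5.4} the lattice $R^{ab}$ is coflabby for \emph{every} free presentation; thus the only genuinely new content is invertibility in part (1) and the failure of flabbiness in part (2). For part (1), when $n$ is odd the Sylow $2$-subgroup of $D_n$ is $\langle\tau\rangle\simeq C_2$, while for each odd prime $p\mid n$ the Sylow $p$-subgroup lies inside the cyclic group $\langle\sigma\rangle$; hence every Sylow subgroup of $D_n$ is cyclic. Since $R^{ab}$ is coflabby, the coflabby half of Theorem \ref{t2.6} applies verbatim and yields at once that $R^{ab}$ is invertible, with no computation required.

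For part (2), coflabbiness is again Lemma \ref{l5.4}, so the heart of the matter is to exhibit a subgroup $S\le D_n$ with $H^{-1}(S,R^{ab})\ne 0$. The device I would use is a double dimension shift through the two short exact sequences of Lemma \ref{l5.2}, namely $0\to R^{ab}\to\bm{Z}[G]^{(d)}\to I_G\to 0$ and $0\to I_G\to\bm{Z}[G]\to\bm{Z}\to 0$. Upon restriction to any subgroup $S$, both middle terms become free $\bm{Z}[S]$-modules, hence cohomologically trivial over $S$, so the long exact sequences in Tate cohomology collapse to isomorphisms $H^{-1}(S,R^{ab})\cong H^{-2}(S,I_G)\cong H^{-3}(S,\bm{Z})$. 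For the trivial module this last group is the Tate-homology group $H_2(S,\bm{Z})$, i.e.\ the Schur multiplier of $S$.

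I would then exploit that $n$ even forces a Klein four subgroup into $D_n$: since $\sigma^{n/2}$ is central of order $2$ and commutes with $\tau$, the subgroup $S=\langle\sigma^{n/2},\tau\rangle$ is isomorphic to $C_2\times C_2$. Its Schur multiplier $H_2(C_2\times C_2,\bm{Z})\cong\bm{Z}/2\bm{Z}$ is nonzero, whence $H^{-1}(S,R^{ab})\ne 0$ and $R^{ab}$ fails to be flabby, as desired.

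The main obstacle is essentially bookkeeping: one must verify that both middle modules are genuinely $S$-free (true because $\bm{Z}[G]$ is free over $\bm{Z}[S]$ for any subgroup $S$) and that the connecting maps in the two long exact sequences realize the claimed degree shifts, and then invoke the standard identification $H^{-3}(S,\bm{Z})\cong H_2(S,\bm{Z})$ together with the well-known value of the Schur multiplier of the Klein four group. It is worth remarking that the same isomorphism reproves flabbiness in the odd case, since every subgroup of $D_n$ with $n$ odd has trivial Schur multiplier, although this observation is not needed once Theorem \ref{t2.6} has been applied.
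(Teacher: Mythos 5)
Your proposal is correct, and part (2) coincides with the paper's own proof: the same subgroup $S=\langle\sigma^{n/2},\tau\rangle\simeq C_2\times C_2$, the same double dimension shift through the two sequences of Lemma \ref{l5.2} giving $H^{-1}(S,R^{ab})\simeq H^{-2}(S,I_G)\simeq H^{-3}(S,\bm{Z})\simeq H_2(S,\bm{Z})\simeq\bm{Z}/2\bm{Z}$ (the paper derives the last value via the K\"unneth formula rather than quoting the Schur multiplier, but that is cosmetic). In part (1), however, you take a genuinely shorter route than the paper. You invoke the coflabby half of Theorem \ref{t2.6} directly: Sylow subgroups of $D_n$ ($n$ odd) are cyclic, so \emph{every} coflabby $D_n$-lattice is invertible, and $R^{ab}$ is coflabby by Lemma \ref{l5.4}. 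The paper instead uses the third clause of Theorem \ref{t2.6} (that $[I_G^0]^{fl}$ is invertible), dualizes the sequence $0\to R^{ab}\to\bm{Z}[G]^{(d)}\to I_G\to 0$ to obtain a flabby resolution $0\to I_G^0\to\bm{Z}[G]^{(d)}\to(R^{ab})^0\to 0$, deduces that $(R^{ab})^0$ is invertible, and dualizes back. Your argument avoids the dualization bookkeeping entirely; what the paper's detour buys is the explicit identity $[I_G^0]^{fl}=[(R^{ab})^0]$, which is the mechanism reused in the proof of Theorem \ref{t5.9}. Both arguments are sound, and your closing remark (that the same dimension shift shows flabbiness in the odd case) is correctly flagged as unnecessary, since it gives only flabbiness and not the invertibility asserted in part (1).
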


\begin{proof}
(1) Suppose $n$ is odd. Then all the $p$-Sylow subgroups of $D_n$
are cyclic groups. Hence we may apply Theorem \ref{t2.6}. Thus
$[I_G^0]^{fl}$ is invertible.

On the other hand, from Lemma \ref{l5.2}, we get $0\to R^{ab}\to \bm{Z}[G]^{(d)} \to I_G\to 0$.
Taking the dual lattices, we get $0\to I_G^0\to \bm{Z}[G]^{(d)}\to (R^{ab})^0\to 0$.
By Lemma \ref{l5.2}, $R^{ab}$ is coflabby.
Hence $(R^{ab})^0$ is flabby.
Thus $0\to I_G^0 \to \bm{Z}[G]^{(d)}\to (R^{ab})^0\to 0$ is a flabby resolution of $I_G^0$,
i.e.\ $[I_G^0]^{fl}=[(R^{ab})^0]$.

Since $[I_G^0]^{fl}$ is invertible, it follows that $[(R^{ab})^0]$ is invertible.
Hence there is some permutation $G$-lattice $P$ such that $(R^{ab})^0 \oplus P$ is an invertible lattice.
Thus $(R^{ab})^0\oplus P\oplus M=Q$ for some $G$-lattice $M$ and some permutation $G$-lattice $Q$.
Taking dual, we find that $R^{ab}$ is invertible.

(2) By Lemma \ref{l5.2}, $R^{ab}$ is coflabby. It remains to show that
$R^{ab}$ is not flabby. Since $n$ is even, the subgroup
$S=\langle\tau,\sigma^{n/2}\rangle$ exists and is isomorphic to
$C_2 \times C_2$. We will show that $H^{-1}(S,R^{ab})\ne 0$.

{}From the exact sequences $0\to R^{ab}\to \bm{Z}[G]^{(d)}\to I_G
\to 0$ and $0\to I_G\to \bm{Z}[G]\to \bm{Z}\to 0$, we find that
$H^{-1}(S,R^{ab})\simeq H^{-2}(S,I_G)\simeq
H^{-3}(S,\bm{Z})=H_2(S,\bm{Z})\simeq \bm{Z}/2\bm{Z}$ by \cite[page
223, Theorem 15.2]{HS}, because $H_1(C_2,\bm{Z})\simeq
\bm{Z}/2\bm{Z}$, $H_2(C_2,\bm{Z})=0$.
\end{proof}

An immediate consequence of Endo and Miyata's Theorem, i.e. Theorem \ref{t2.7}, is the following.

\begin{theorem} \label{t5.8}
Let $R^{ab}$ be the relation module associated to the free resolution $1\to R\to F\xrightarrow{\varepsilon} D_n\to 1$ in Definition \ref{d1.1} where $n \ge 2$. If $K/k$ is a Galois extension with Galois group $D_n$, then $K(R^{ab})^{D_n}$ is stably rational over $k$ if and only if $n$ is odd. If $k$ is an infinite field and $n$ is even, then $K(R^{ab})^{D_n}$ is not retract rational over $k$.
\end{theorem}

\begin{proof}
By Theorem \ref{t2.7}, $[I_{D_n}^0]^{fl}=0$ if and only if $n$ is an odd integer. On the other hand, $[I_{D_n}^0]^{fl}=[(I_{D_n}^{\otimes 2})^0]=[(R^{ab})^0]$ by Lemma \ref{l5.2}. Note that $[(R^{ab})^0]=0 \Leftrightarrow [(R^{ab})]=0 \Leftrightarrow [(R^{ab})]^{fl}=0$, because the flabby resolution $0 \rightarrow R^{ab} \rightarrow P \rightarrow E \rightarrow 0$ splits whenever $E$ is invertible (see the following paragraph). Apply Theorem \ref{t2.5}. The proof for the stable rationality is finished.

Now suppose that $k$ is an infinite field and $n$ is even. Choose a flabby resolution $0 \rightarrow R^{ab} \rightarrow P \rightarrow E \rightarrow 0$ for $R^{ab}$ where $P$ is permutation and $E$ is flabby. We claim that $[E] \in F_{D_n}$ is not invertible. Suppose not. Then we may assume that $E$ is an invertible lattice without loss of generality. Since $R^{ab}$ is coflabby by Lemma \ref{l5.2}, the above exact sequence splits because of \cite[Proposition 1.2]{Len}. It leads to the fact that $P \simeq R^{ab} \oplus E$, i.e. $R^{ab}$ is invertible. This is impossible by Lemma \ref{l5.5}. Because $[E] \in F_{D_n}$ is not invertible, we find that $K(R^{ab})^{D_n}$ is not retract rational over $k$ by Theorem \ref{t2.5}.
\end{proof}

The following theorem was communicated to us by the referee. We will give another decomposition of $R^{ab}$ (when $n$ is odd) in Theorem \ref{t5.7}.

\begin{theorem} \label{t5.12}
Let $R^{ab}$ be the relation module of the free presentation of
$D_n$ in Definition \ref{d1.1}. If $n$ is an odd integer $\ge 3$,
then $R^{ab}\simeq M\oplus \widetilde{M}$ where $M$ and $M_+$ (resp. $\widetilde{M}$ and
$\widetilde{M}_+$) belong to the same genus.
\end{theorem}

\begin{proof}
Write $G=D_n$ where $n$ is odd.

{}From Lemma \ref{l3.9} we have a short exact sequence $0\to M_+ \oplus \widetilde{M}_+\to \bm{Z}[G]^{(2)}\to I_G \to 0$. From Lemma \ref{l5.2} we have another short exact sequence $0\to R^{ab} \to \bm{Z}[G]^{(2)} \to I_G \to 0$. By Schanuel's Lemma, we find that $(M_+ \oplus \widetilde{M}_+)\oplus \bm{Z}[G]^{(2)} \simeq R^{ab}\oplus \bm{Z}[G]^{(2)}$.

Let $(\bm{Z}[G]^{(2)})_{(G)}, (M_+)_{(G)}, \ldots$ be the localizations of $\bm{Z}[G]^{(2)}, M_+ \ldots$ with respect to the multiplicative closed set $\bm{Z} \setminus \cup_p p\bm{Z}$ where $p$ runs over all the prime divisors of $\mid G \mid$.

{}From $(M_+ \oplus \widetilde{M}_+)\oplus \bm{Z}[G]^{(2)} \simeq R^{ab}\oplus \bm{Z}[G]^{(2)}$, we have $(M_+)_{(G)}\oplus (\widetilde{M}_+)_{(G)}\oplus (\bm{Z}[G]^{(2)})_{(G)} \simeq (R^{ab})_{(G)}\oplus (\bm{Z}[G]^{(2)})_{(G)}$. By the semi-local cancellation \cite[page 19]{Gr1}, we obtain that $(M_+)_{(G)}\oplus (\widetilde{M}_+)_{(G)} \simeq (R^{ab})_{(G)}$. By \cite[page 26, Proposition 5.1]{Gr1} we find that $R^{ab} = M \oplus \widetilde{M}$ for some sublattices $M$ and $\widetilde{M}$ with the required property (see \cite[page 642]{CR1} for the definition of a genus).
\end{proof}

\bigskip
Now we will turn to finding the free generators of $R$ in a free presentation $1\to R\to F\xrightarrow{\varepsilon} G\to 1$
where $G$ is a finite group and $F$ is a free group of rank $d$.

It is known that $R$ is a free group of rank $1+(d-1)\cdot |G|$ by
Schreier's Theorem \cite[page 36]{Kur}. Let $s_1,s_2,\ldots,s_d$
be the free generators of $F$. Then the free generators of $R$ can
be found as follows (see \cite[pages 205--207]{Ma}).

We choose a Schreier system $\Sigma$. $\Sigma$ is a subset of $F$
such that $1\in\Sigma$, $|\Sigma\cap Rx|=1$ for any $x\in F$, and
some other properties are satisfied (see \cite[page 205]{Ma}).

Define a function $\Phi:F\to \Sigma$ such that
$\varepsilon(x)=\varepsilon(\Phi(x))$ for any $x\in F$.

Then the free generators of $R$ can be chosen as
$us_i\Phi(us_i)^{-1}$ where $u\in \Sigma$, $1\le i\le d$ and
$us_i\Phi(us_i)^{-1}\ne 1$ (see \cite[p.206, Theorem 8.1]{Ma}).

\medskip
In Definition \ref{d1.1} we consider a particular free
presentation $1\to R\to F\xrightarrow{\varepsilon} D_n\to 1$ where
$D_n=\langle
\sigma,\tau:\sigma^n=\tau^2=1,\tau\sigma\tau^{-1}=\sigma^{-1}\rangle$,
$F=\langle s_1,s_2 \rangle$ is the free group of rank two, and
$\varepsilon(s_1)=\sigma$, $\varepsilon(s_2)=\tau$. It is routine
to verify that $\Sigma=\{s_1^is_2^j:0\le i\le n-1$, $0\le j\le
1\}$ is a Schreier system of this free presentation. Hence the
free generators of $R$ can be chosen as
\begin{equation}
s_1^n,~s_2s_1s_2^{-1}s_1^{-(n-1)},~ s_1^is_2s_1s_2^{-1}s_1^{-(i-1)} ~ (1\le i\le n-1),~s_1^i s_2^2 s_1^{-i} ~(0\le i\le n-1). \label{q:1}
\end{equation}

We replace the generator $s_2s_1s_2^{-1}s_1^{-(n-1)}$ in \eqref{q:1} by $s_2s_1s_2^{-1}s_1^{-(n-1)}\cdot s_1^n=s_2s_1s_2^{-1}s_1$.
Hence we get the generators $a$, $b_i$, $c_i$ ($0\le i\le n-1$) of $R$ where
\begin{equation}
a=s_1^n,~ b_i=s_1^is_2s_1s_2^{-1}s_1^{-(i-1)},~ c_i=s_1^is_2^2s_1^{-i}. \label{q:2}
\end{equation}

Note that $F$ acts on $R$ by conjugation, i.e. for any $x\in F$, any $\alpha \in R$, $^x \alpha=x\cdot \alpha \cdot x^{-1}$.
It is not
difficult to verify that $s_1$ and $s_2$ act on these generators
by
\begin{align}
s_1:{}& a\mapsto a,~b_i\mapsto b_{i+1} \mbox{ if }0\le i\le n-2,~b_{n-1}\mapsto ab_0a^{-1}, \label{q:3} \\
& c_i\mapsto c_{i+1}\mbox{ if } 0\le i\le n-2,~ c_{n-1}\mapsto ac_0a^{-1}, \nonumber \\
s_2:{}& a\mapsto b_0a^{-1}b_{n-1}b_{n-2}\cdots b_1,~b_0\mapsto c_0c_1^{-1}b_1,~b_1\mapsto b_0a^{-1}c_{n-1}ac_0^{-1}, \label{q:4} \\
& b_i\mapsto b_0a^{-1}b_{n-1}b_{n-2}\cdots b_{n-i+1}c_{n-i}c_{n-i+1}^{-1}b_{n-i+2}^{-1}b_{n-i+3}^{-1}\cdots b_{n-1}^{-1}ab_0^{-1} \nonumber \\
& \mbox{(where }2\le i\le n-1), \nonumber \\
& c_0\mapsto c_0,~ c_1\mapsto (b_0a^{-1})c_{n-1}(b_0a^{-1})^{-1}, \nonumber \\
& c_i\mapsto (b_0a^{-1}b_{n-1}b_{n-2}\cdots b_{n-i+1})c_{n-i}(b_0a^{-1}b_{n-1}b_{n-2}\cdots b_{n-i+1})^{-1} \nonumber \\
& \mbox{(where }2\le i\le n-1). \nonumber
\end{align}
More explicitly, if $n$ is odd, the action of $s_2$ on
$b_2,b_3,\ldots,b_{n-1}$ is given by
\begin{align*}
b_2 &\mapsto b_0 a^{-1}b_{n-1}c_{n-2}c_{n-1}^{-1}ab_0^{-1}, \\
\vdots\; & \\
b_{\frac{n-1}{2}} &\mapsto b_0a^{-1}b_{n-1}b_{n-2}\cdots b_{\frac{n+3}{2}}c_{\frac{n+1}{2}}c_{\frac{n+3}{2}}^{-1}b_{\frac{n+5}{2}}^{-1}\cdots b_{n-1}^{-1}ab_0^{-1}, \\
b_{\frac{n+1}{2}} &\mapsto b_0a^{-1}b_{n-1}b_{n-2}\cdots b_{\frac{n+1}{2}}c_{\frac{n-1}{2}}c_{\frac{n+1}{2}}^{-1}b_{\frac{n+3}{2}}^{-1}\cdots b_{n-1}^{-1}ab_0^{-1}, \\
b_{\frac{n+3}{2}} &\mapsto b_0a^{-1}b_{n-1}b_{n-2}\cdots b_{\frac{n-1}{2}}c_{\frac{n-3}{2}}c_{\frac{n-1}{2}}^{-1}b_{\frac{n+1}{2}}^{-1}\cdots b_{n-1}^{-1}ab_0^{-1}, \\
\vdots\; & \\
b_{n-1} &\mapsto b_0a^{-1}b_{n-1}b_{n-2}\cdots b_2c_1c_2^{-1}b_3^{-1}\cdots b_{n-1}^{-1}ab_0^{-1}.
\end{align*}

\medskip
\begin{lemma} \label{l5.6}
Let $1\to R\to F\xrightarrow{\varepsilon} D_n\to 1$ be the free
presentation of
$D_n=\langle\sigma,\tau:\sigma^n=\tau^2=1,\tau\sigma\tau^{-1}=\sigma^{-1}\rangle$
in Definition \ref{d1.1}. Then $R^{ab}$ is a $D_n$-lattice with
$R^{ab}=\bm{Z}\cdot\bar{a}\oplus (\bigoplus_{0\le i\le
n-1}\bm{Z}\cdot \bar{b}_i)\oplus (\bigoplus_{0\le i\le n-1}
\bm{Z}\cdot \bar{c}_i)$ and the actions of $\sigma$ and $\tau$ on
$R^{ab}$ are given by
\begin{align*}
\sigma:{} & \bar{a}\mapsto \bar{a},~\bar{b}_0\mapsto\bar{b}_1\mapsto\cdots\mapsto\bar{b}_{n-1}\mapsto\bar{b}_0,~
\bar{c}_0\mapsto\bar{c}_1\mapsto\cdots\mapsto\bar{c}_{n-1}\mapsto c_0, \\
\tau:{} & \bar{a}\mapsto -\bar{a}+\sum_{0\le i\le n-1} \bar{b}_i,~\bar{b}_0\mapsto\bar{b}_1+\bar{c}_0-\bar{c}_1,~\bar{b}_1\mapsto\bar{b}_0+\bar{c}_{n-1}-\bar{c}_0, \\
& \bar{b}_i\mapsto\bar{b}_{n-i+1}+\bar{c}_{n-i}-\bar{c}_{n-i+1}\mbox{ for }2\le i\le n-1, \\
& \bar{c}_0\mapsto\bar{c}_0,~\bar{c}_i\mapsto\bar{c}_{n-i}\mbox{ for }1\le i\le n-1.
\end{align*}
\end{lemma}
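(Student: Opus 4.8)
The plan is to deduce the $\sigma$- and $\tau$-actions on $R^{ab}$ by abelianizing the conjugation formulas \eqref{q:3} and \eqref{q:4} for the free generators $a$, $b_i$, $c_i$ of $R$.

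First I would note that, as recalled after the proof of Theorem \ref{t5.9}, the group $R$ is free on the $2n+1$ elements $a, b_0,\ldots,b_{n-1}, c_0,\ldots,c_{n-1}$ (here $d=2$ and $|D_n|=2n$, so Schreier's Theorem gives $\fn{rank}R = 1+(d-1)|D_n| = 2n+1$). Hence their images $\bar a, \bar b_i, \bar c_i$ form a $\bm{Z}$-basis of $R^{ab}$, which is the asserted direct-sum decomposition. Next I would observe that the conjugation action of $F$ on $R$ descends to $R^{ab}$ and factors through $D_n = F/R$: conjugation by an element of $R$ is an inner automorphism of $R$, hence acts trivially on $R^{ab}$, so $\sigma$ acts as the class of $s_1$ and $\tau$ as the class of $s_2$.

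The computation then reduces to a purely additive transcription: a word in $a^{\pm 1}, b_i^{\pm 1}, c_i^{\pm 1}$ is sent to the signed sum of its letters, so in particular every conjugate $gxg^{-1}$ maps to $\bar x$. Reading \eqref{q:3} this way gives the $\sigma$-action at once, since $b_{n-1}\mapsto ab_0a^{-1}$ and $c_{n-1}\mapsto ac_0a^{-1}$ collapse to $\bar b_{n-1}\mapsto\bar b_0$ and $\bar c_{n-1}\mapsto\bar c_0$. For $\tau$, the formulas for $\bar a$, $\bar b_0$, $\bar b_1$, $\bar c_0$, $\bar c_1$ and the $\bar c_i$ ($2\le i\le n-1$) drop out immediately from \eqref{q:4}, the last being a genuine conjugation and hence giving $\bar c_i\mapsto\bar c_{n-i}$.

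The one step that needs care is the image of $b_i$ for $2\le i\le n-1$, where I would verify a telescoping cancellation. In $b_0a^{-1}b_{n-1}\cdots b_{n-i+1}\,c_{n-i}c_{n-i+1}^{-1}\,b_{n-i+2}^{-1}\cdots b_{n-1}^{-1}\,ab_0^{-1}$ the outer $\bar b_0$'s cancel and the $\bar a$'s cancel, while the positive block $\bar b_{n-1}+\cdots+\bar b_{n-i+1}$ cancels against the negative block $\bar b_{n-1}+\cdots+\bar b_{n-i+2}$, leaving precisely $\bar b_{n-i+1}$; together with $\bar c_{n-i}-\bar c_{n-i+1}$ this yields $\bar b_i\mapsto \bar b_{n-i+1}+\bar c_{n-i}-\bar c_{n-i+1}$. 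The main obstacle is thus purely bookkeeping: keeping the index ranges of the two $b$-blocks straight so that exactly one $b$-term survives. One may note as a consistency check that the boundary formulas for $\bar b_0$ and $\bar b_1$ are this same formula read modulo $n$.
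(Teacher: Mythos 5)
Your proposal is correct and follows exactly the paper's own route: the paper's proof simply declares that the actions "follow from Formulae \eqref{q:3} and \eqref{q:4}" by passing to $R^{ab}$, which is precisely your abelianization argument. Your added details (the Schreier rank count, the fact that conjugation by elements of $R$ dies in $R^{ab}$ so the action factors through $D_n$, and the telescoping cancellation for $\bar b_i$ with $2\le i\le n-1$) are all accurate fillings-in of what the paper leaves implicit.
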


\begin{proof}
Let $\bar{a}$, $\bar{b}_i$, $\bar{c}_i$ be the images of $a$,
$b_i$, $c_i$ in the canonical projection $R\to R^{ab}=R/[R,R]$
where $a$, $b_i$, $c_i$ are defined in \eqref{q:2}. The actions of
$\sigma$ and $\tau$ on $R^{ab}$ follow from Formulae \eqref{q:3}
and \eqref{q:4}.
\end{proof}

\bigskip
\begin{theorem} \label{t5.7}
Let $R^{ab}$ be the relation module of the free presentation of
$D_n$ in Definition \ref{d1.1}. If $n$ is an odd integer $\ge 3$,
then $R^{ab}\simeq M_+\oplus \widetilde{M}_+$ where $M_+$ and
$\widetilde{M}_+$ are the $D_n$-lattices in Definition \ref{d3.1}
and Definition \ref{d3.3}. Thus $R^{ab}\oplus \bm{Z}\simeq
\bm{Z}[D_n/\langle\sigma\rangle]\oplus
\bm{Z}[D_n/\langle\tau\rangle]^{(2)}$.
\end{theorem}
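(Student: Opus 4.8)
The plan is to read off the explicit matrices for $\sigma$ and $\tau$ on $R^{ab}$ from Lemma \ref{l5.6} and then to split $R^{ab}$ by hand into two $G$-stable sublattices of ranks $n$ and $n+1$, in the spirit of the determinant computations of Theorems \ref{t3.4}, \ref{t3.5} and \ref{t3.7}. First I would isolate the span $C:=\bigoplus_{0\le i\le n-1}\bm{Z}\cdot\bar c_i$. Since $\sigma:\bar c_i\mapsto\bar c_{i+1}$ and $\tau:\bar c_0\mapsto\bar c_0$, $\bar c_i\mapsto\bar c_{n-i}$, the lattice $C$ is a $G$-submodule, and the assignment $\bar c_i\mapsto\sigma^i u$ is exactly the induced action, so $C\simeq\fn{Ind}^G_H\bm{Z}=M_+$ of Definition \ref{d3.1}. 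As $\fn{rank}_{\bm{Z}}R^{ab}=2n+1=n+(n+1)$, it then suffices to produce a complementary $G$-submodule isomorphic to $\widetilde M_+$.

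Next I would construct that complement explicitly. Writing the $\tau$-action of Lemma \ref{l5.6} uniformly as $\tau(\bar b_i)=\bar b_{n+1-i}+\bar c_{n-i}-\bar c_{n+1-i}$ (indices mod $n$), one sees two obstacles to using the span of $\bar a$ and the $\bar b_i$ directly: the image leaks into $C$, and the reflection $i\mapsto n+1-i$ fixes $\frac{n+1}{2}$ rather than $0$. I would therefore reindex and correct, setting $\beta_j:=\bar b_{j+\frac{n+1}{2}}+\gamma_j$ and $\alpha:=\bar a+\delta$ with $\gamma_j,\delta\in C$, chosen so that $\sigma:\beta_j\mapsto\beta_{j+1}$, $\alpha\mapsto\alpha$ and $\tau:\beta_j\mapsto\beta_{n-j}$, $\alpha\mapsto-\alpha+\sum_j\beta_j$, i.e.\ precisely the action on $\{w_j,w\}$ of Definition \ref{d3.3}. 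Imposing $\sigma$-equivariance forces $\gamma_j=\sigma^j(\gamma_0)$ and forces $\delta$ to be $\sigma$-fixed, hence $\delta\in\bm{Z}\cdot\sum_i\bar c_i$. Imposing $\tau$-equivariance of the $\beta_j$ reduces, after expanding $\gamma_0=\sum_i t_i\bar c_i$, to the antisymmetry condition that $t_{-s}-t_s$ equal $+1$ at $s=\frac{n+1}{2}$, $-1$ at $s=\frac{n-1}{2}$, and $0$ otherwise; since $-\frac{n+1}{2}\equiv\frac{n-1}{2}$ and $n$ is odd, these two indices form one $\tau$-orbit and the system is solvable, e.g.\ by $\gamma_0=\bar c_{(n-1)/2}$.

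The main obstacle is a parity point concealed in the $\alpha$-equation: $\tau$-equivariance of $\alpha$ forces $2\delta=\sum_j\gamma_j$, yet the minimal choice $\gamma_0=\bar c_{(n-1)/2}$ gives $\sum_j\gamma_j=\sum_i\bar c_i$, which is not twice an integral vector. I would remove this by adding a $\tau$-symmetric term (such as $\bar c_0$) to $\gamma_0$: symmetric terms satisfy $\tau(\sigma^j\eta)=\sigma^{n-j}\eta$, so they leave the $\beta_j$-equations intact while shifting $\sum_j\gamma_j$ by $(\sum_i e_i)\sum_i\bar c_i$. Taking $\gamma_0=\bar c_{(n-1)/2}+\bar c_0$ yields $\sum_j\gamma_j=2\sum_i\bar c_i$, whence $\delta=\sum_i\bar c_i$ works. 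With a valid choice fixed, the transition from $\{\bar a,\bar b_i,\bar c_i\}$ to $\{\alpha,\beta_j,\bar c_i\}$ permutes the $\bar b$-block and fixes $C$ while adding only $C$-components to $\alpha$ and the $\beta_j$; it is therefore triangular up to a permutation, with determinant $\pm1$, so $\{\alpha,\beta_j,\bar c_i\}$ is a $\bm{Z}$-basis. Hence $R^{ab}=C\oplus\bigl(\bigoplus_j\bm{Z}\cdot\beta_j\oplus\bm{Z}\cdot\alpha\bigr)\simeq M_+\oplus\widetilde M_+$.

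Finally, the displayed consequence follows by a stable computation. Since $M_+=\fn{Ind}^G_H\bm{Z}=\bm{Z}[G/\langle\tau\rangle]$ and Theorem \ref{t3.4} gives $\widetilde M_+\oplus\bm{Z}\simeq\bm{Z}[G/\langle\sigma\rangle]\oplus\bm{Z}[G/\langle\tau\rangle]$, adding $\bm{Z}$ to $R^{ab}\simeq M_+\oplus\widetilde M_+$ yields $R^{ab}\oplus\bm{Z}\simeq\bm{Z}[G/\langle\tau\rangle]\oplus\bm{Z}[G/\langle\sigma\rangle]\oplus\bm{Z}[G/\langle\tau\rangle]=\bm{Z}[G/\langle\sigma\rangle]\oplus\bm{Z}[G/\langle\tau\rangle]^{(2)}$, as claimed.
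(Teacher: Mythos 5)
Your proposal is correct and takes essentially the same route as the paper: the paper's proof of Theorem \ref{t5.7} performs exactly this splitting, only phrased as conjugation of the matrices from Lemma \ref{l5.6} by explicit unimodular matrices $P_1,P_2,P_3$, where $P_2$ (with block $A^{(n-1)/2}$) is your recentering of the $\bar{b}$-indices and $P_1$ adds precisely your kind of $C$-corrections to the $\bar{b}_i$, after which the block-diagonal form exhibits the $\bar{c}$-span as $M_+$ and the corrected $(\bar{b},\bar{a})$-span as $\widetilde{M}_+$. The only difference is cosmetic: the paper's correction $\bar{c}_{j+(n-1)/2}-\bar{c}_j$ sums to zero over $j$, so $\bar{a}$ needs no adjustment, whereas your particular solution $\bar{c}_{j+(n-1)/2}+\bar{c}_j$ forces the extra shift $\delta=\sum_i\bar{c}_i$; both yield valid complements, and your derivation of the stable identity from Theorem \ref{t3.4} is the paper's.
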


\begin{proof}
Once we prove $R^{ab}\simeq M_+\oplus \widetilde{M}_+$,
then identity $R^{ab}\oplus \bm{Z}\simeq \bm{Z}[D_n/\langle\sigma\rangle]\oplus\bm{Z}[D_n/\langle\tau\rangle]^{(2)}$ follows from Theorem \ref{t3.4}
because $M_+=\fn{Ind}_{\langle\tau\rangle}^{D_n}\bm{Z} \simeq \bm{Z}[D_n/\langle\tau\rangle]$ by definition.

We will give two proofs of $R^{ab}\simeq M_+\oplus \widetilde{M}_+$. The first proof relies on Lemma \ref{l5.6} and is computational. The second proof is suggested by the referee. It is based on a new technique, besides an adaptation of the method in \cite[pages 56-58]{Gr1} and \cite[page 199, Corollary 6.4]{GR} (see Step 2 and Step 3 of the second proof).

\medskip
The first proof ------------

\bigskip
Step 1. Recall the $n\times n$ matrices $A$ and $B$ in Definition
\ref{d3.1}. Define $I_n$ to be the identity matrix of size $n$,
and define $\bm{1}_n$ to be the $n\times 1$ matrix all of whose
entries are equal to 1. Define $C$ to be the $n\times n$ matrix
\[
C=\begin{pmatrix}
& & & 1 & -1 \\ & & \iddots & -1 & 0 \\ & 1 & \iddots & & \vdots \\ 1 & -1 & & & 0 \\ -1 & & & & 1
\end{pmatrix} \in M_n(\bm{Z}).
\]

For examples, when $n=5$, $C$ is equal to
\[
\begin{pmatrix}
0 & 0 & 0 & 1 & -1 \\ 0 & 0 & 1 & -1 & 0 \\ 0 & 1 & -1 & 0 & 0 \\ 1 & -1 & 0 & 0 & 0 \\ -1 & 0 & 0 & 0 & 1
\end{pmatrix}.
\]

\medskip
Step 2.
Write the actions of $\sigma$ and $\tau$ on $R^{ab}$ with respect to the ordered basis $\bar{b}_1,\ldots,\bar{b}_{n-1},\bar{b}_0,\bar{c}_1,\ldots,\bar{c}_{n-1},\bar{c}_0,\bar{a}$.
It is easy to verify that
\[
\sigma\mapsto \left(\begin{array}{@{}cc;{3pt/2pt}c@{}}
A & 0 & \\ 0 & A & \\ \hdashline[3pt/2pt] & & 1
\end{array}\right)\in M_{2n+1}(\bm{Z}), \quad
\tau\mapsto \left(\begin{array}{@{}cc;{3pt/2pt}c@{}} AB & 0 &
\bm{1}_n
\\ C & B & \\ \hdashline[3pt/2pt] & & -1
\end{array}\right)\in M_{2n+1}(\bm{Z}).
\]

It is easy to check that $C=B-AB$.

We will change the basis of $R^{ab}$ in the following step so that the actions of $\sigma$ and $\tau$ correspond to the matrices
\[
\left(\begin{array}{@{}c;{3pt/2pt}cc@{}}
A & & \\ \hdashline[3pt/2pt] & A & 0 \\ & 0 & 1
\end{array}\right)\quad \mbox{and} \quad
\left(\begin{array}{@{}c;{3pt/2pt}cc@{}} B & & \\
\hdashline[3pt/2pt] & B & \bm{1}_n \\ & 0 & -1
\end{array}\right).
\]

Compare with the matrix forms of $M_+$ and $\widetilde{M}_+$ in Definition \ref{d3.1} and Definition \ref{d3.3}.
We find that $R^{ab}\simeq M_+\oplus \widetilde{M}_+$.

\medskip
Step 3.
Remember $A^n=B^2=(AB)^2=I_n$.
Thus $BA=A^{-1}B$ and $A^{-\frac{n-1}{2}}=A^{\frac{n+1}{2}}$.

Define
\[
P_1=\left(\begin{array}{@{}cc;{3pt/2pt}c@{}}
I_n & 0 & \\ I_n-A^{\frac{n-1}{2}} & I_n & \\ \hdashline[3pt/2pt] & & 1
\end{array}\right).
\]

Then we find
\begin{gather*}
P_1\cdot \left(\begin{array}{@{}cc;{3pt/2pt}c@{}}
A & 0 & \\ 0 & A & \\ \hdashline[3pt/2pt] & & 1
\end{array}\right) \cdot P_1^{-1}=\left(\begin{array}{@{}cc;{3pt/2pt}c@{}}
A & 0 & \\ 0 & A & \\ \hdashline[3pt/2pt] & & 1
\end{array}\right), \\
P_1\cdot \left(\begin{array}{@{}cc;{3pt/2pt}c@{}} AB & 0 &
\bm{1}_n
\\ C & B & \\ \hdashline[3pt/2pt] & & -1
\end{array}\right)\cdot P_1^{-1}=\left(\begin{array}{@{}cc;{3pt/2pt}c@{}}
AB & 0 & \bm{1}_n \\ 0 & B & \\ \hdashline[3pt/2pt] & & -1
\end{array}\right).
\end{gather*}

\medskip
Define
\[
P_2=\left(\begin{array}{@{}cc;{3pt/2pt}c@{}}
A^{\frac{n-1}{2}} & 0 & \\ 0 & I_n & \\ \hdashline[3pt/2pt] & & 1
\end{array}\right).
\]

Then we find
\begin{gather*}
P_2\cdot \left(\begin{array}{@{}cc;{3pt/2pt}c@{}}
A & 0 & \\ 0 & A & \\ \hdashline[3pt/2pt] & & 1
\end{array}\right) \cdot P_2^{-1}=\left(\begin{array}{@{}cc;{3pt/2pt}c@{}}
A & 0 & \\ 0 & A & \\ \hdashline[3pt/2pt] & & 1
\end{array}\right), \\
P_2\cdot \left(\begin{array}{@{}cc;{3pt/2pt}c@{}} AB & 0 &
\bm{1}_n
\\ 0 & B & \\ \hdashline[3pt/2pt] & & -1
\end{array}\right)\cdot P_2^{-1}=\left(\begin{array}{@{}cc;{3pt/2pt}c@{}}
B & 0 & \bm{1}_n \\ 0 & B & \\ \hdashline[3pt/2pt] & & -1
\end{array}\right).
\end{gather*}

\medskip
Define
\[
P_3=\left(\begin{array}{@{}cc;{3pt/2pt}c@{}}
0 & I_n & \\ I_n & 0 & \\ \hdashline[3pt/2pt] & & 1
\end{array}\right).
\]

We finally get
\begin{gather*}
P_3\cdot \left(\begin{array}{@{}cc;{3pt/2pt}c@{}}
A & 0 & \\ 0 & A & \\ \hdashline[3pt/2pt] & & 1
\end{array}\right) \cdot P_3^{-1}=\left(\begin{array}{@{}cc;{3pt/2pt}c@{}}
A & 0 & \\ 0 & A & \\ \hdashline[3pt/2pt] & & 1
\end{array}\right), \\
P_3\cdot \left(\begin{array}{@{}cc;{3pt/2pt}c@{}} B & 0 & \bm{1}_n
\\ 0 & B & \\ \hdashline[3pt/2pt] & & -1
\end{array}\right)\cdot P_3^{-1}=\left(\begin{array}{@{}cc;{3pt/2pt}c@{}}
B & 0 & \\ 0 & B & \bm{1}_n \\ \hdashline[3pt/2pt] & & -1
\end{array}\right).
\end{gather*}

\bigskip
The second proof --------------

Step 1. Let the notations be the same as in Definition \ref{d1.1}. Define $H:= \langle \tau \rangle$ to be the subgroup of $G \simeq D_n$ generated by $\tau$. The augmentation map $\bm{Z}[H] \to \bm{Z}$ induces a surjective $G$-morphism $\bm{Z}[G] \to \bm{Z}[G/H]$ since $\bm{Z}[G]\simeq \fn{Ind}^G_H \bm{Z}[H]$ and $\bm{Z}[G/H]\simeq \fn{Ind}^G_H \bm{Z}$. Let $I_G$ and $I_{G/H}$ be the augmentation $G$-lattices associated to $\bm{Z}[G] \to \bm{Z}$ and $\bm{Z}[G/H] \to \bm{Z}$ respectively. From the Snake Lemma, there is a short exact sequence of $G$-lattices
\begin{align*}
0\to G\cdot I_H\to I_G\xrightarrow{\pi} I_{G/H}\to 0
\end{align*}
where $G\cdot I_H$ denotes the kernel of the map $\bm{Z}[G] \to \bm{Z}[G/H]$, which is nothing but the $G$-sublattice of $I_G$ generated by $1 - \tau$.

Note that the above exact sequence is the exact sequence in Step 1 of the proof of \cite[Theorem 8.5: (iii) $\Rightarrow$ (i)]{Gr1} (see the last line in page 57 there). By \cite[page 58, Step 2]{Gr1}, this exact sequence splits.

\bigskip
Step 2. Let $1\to R\to F\xrightarrow{\varepsilon} G\to 1$ be the free
presentation of $D_n$ where $F=\langle s_1,s_2\rangle$ the free
group of rank 2, and $\varepsilon(s_1)=\sigma$,
$\varepsilon(s_2)=\tau$.

Let $I_F$ and $I_G$ be the augmentation ideals of $\bm{Z}[F]$ and  $\bm{Z}[G]$ respectively. By \cite[page 199, Corollary 6.4]{HS}, we have a short exact sequence of $G$-lattices
\begin{align*}
0\to R^{ab}\to \bm{Z}[G]\otimes_{\bm{Z}[F]} I_F\xrightarrow{\nu}I_G\to 0.
\end{align*}

Since $I_F$ is a free $\bm{Z}[F]$-module of rank two by \cite[page 196, Theorem 5.5]{HS},
we find that $\bm{Z}[G]\otimes_{\bm{Z}[F]} I_F \simeq \bm{Z}[G]^{(2)}$. Explicitly, replacing the free generators $1-s_1$ and $1-s_2$ of the $\bm{Z}[F]$-module $I_F$ by $s_1^{(n-1)/2}(1-s_1)$ and $1-s_2$. We find that
\begin{align*}
\bm{Z}[G]\otimes_{\bm{Z}[F]} I_F=\bm{Z}[G](1\otimes(1-s_2))\oplus
\bm{Z}[G](1\otimes s_1^{\frac{n-1}{2}}(1-s_1))\simeq\bm{Z}[G]^{(2)}.
\end{align*}

Note that $\nu(1 \otimes (s_1^{(n-1)/2}(1-s_1)))=\sigma^{(n-1)/2}(1-\sigma)$ and $\nu(1\otimes (1- s_2))=1-\tau$. Thus $\nu(\bm{Z}[G]\otimes (1- s_2))=G \cdot I_H \simeq M_{-}$.

 \bigskip
Step 3. The morphism $\nu$ induces the morphisms $\nu_1$ and $\nu_2$ in the following commutative diagram
\begin{align*}
\xymatrix{
0 \ar[r]
& \bm{Z}[G](1\otimes (1-s_2)) \ar@{->>}[d]^{\nu_1} \ar[r]
& \bm{Z}[G]\otimes_{\bm{Z}[F]} I_F \ar@{->>}[d]^{\nu}\ar[r]
& \bm{Z}[G](1\otimes s_1^{\frac{n-1}{2}}(1-s_1))\ar@{->>}[d]^{\nu_2}\ar[r]
& 0\\
0 \ar[r]
& G\cdot I_H \ar[r]
& I_G \ar[r]
& I_{G/H} \ar[r]
& 0
}
\end{align*}
where the first row is exact (obvious !), and the second row is exact by Step 1. We will show that the columns are surjective in the next step.

\bigskip
Step 4. We will apply the Snake Lemma to the commutative diagram in Step 3.

Note that $\fn{Ker}(\nu)=R^{ab}$ by Part (1) of Lemma \ref{l5.2}.

Since $\nu_1:\bm{Z}[G](1\otimes (1-s_2))\to G \cdot I_G$ sends the generator $1\otimes (1-s_2)$ to $1-\tau$, $\nu_1$ is surjective and $\fn{Ker}(\nu_1)$ is isomorphic to the $G$-sublattice $\sum_{0\le i\le n-1} \bm{Z}(\sigma^i+\tau\sigma^{n-i})$ of $\bm{Z}[G]$. Note that this sublattice is nothing but $M_{+}$ (see Definition \ref{d3.1} and the matrix presentation of $M_{+}$).

On the other hand, the map $\nu_2: \bm{Z}[G] (1 \otimes (s_1^{(n-1)/2}(1-s_1))) \to I_{G/H}$ sends the generator $1\otimes(s_1^{(n-1)/2}(1-s_1))$ to $\sigma^{(n-1)/2} - \sigma^{(n+1)/2} \in I_{G/H}$. Thus it is also surjective and $\fn{Ker}(\nu_2)$ is isomorphic to the $G$-sublattice $\bm{Z}[G](1+\tau)+\bm{Z}(\sum_{0\le i \le n-1}\tau\sigma^i)$ of $\bm{Z}[G]$, which is isomorphic to $\widetilde{M}_{+}$ by Definition \ref{d3.3}.

{}From the Snake Lemma, we find an exact sequence of $G$-lattices
\begin{align*}
0\to M_+\to R^{ab}\to \widetilde{M}_+\to 0.
\end{align*}

The above sequence splits by \cite[Proposition 1.2]{Len}, because $M_+$ is a permutation lattice by definition and $\widetilde{M}_+$ is an invertible lattice by Theorem \ref{t3.4}. Alternatively, since $\widetilde{M}_+$ is an invertible lattice, it is a direct summand of some permutation lattice $P=\oplus_{H^{\prime}}\bm{Z}[G/H^{\prime}]$. It follows that $\fn{Ext}_G^1(\widetilde{M}_+, M_+)$ is a direct summand of $\oplus_{H^{\prime}}\fn{Ext}_G^1(\bm{Z}[G/H^{\prime}], M_+) \simeq \oplus_{H^{\prime}}H^1(H^{\prime}, M_+)=0$ because $M_+$ is a permutation lattice.
\qedhere
\end{proof}

\section{Rationality problems}

Let $k\subset L$ be a field extension.
Recall the definition that $L$ is $k$-rational (resp.\ stably $k$-rational, retract $k$-rational) in Definition \ref{d2.4}.

\begin{defn} \label{d6.1}
Let $G$ be any finite group, $k$ be any field. Let $k(x_g:g\in G)$
be the rational function field in $|G|$ variables over $k$ with a
$G$-action via $k$-automorphism defined by $h\cdot x_g=x_{hg}$ for
any $h,g\in G$. Define $k(G):= k(x_g: g\in G)^G$ the fixed field.
Noether's problem asks whether $k(G)$ is $k$-rational \cite{Sw3}.
\end{defn}

The following theorem is called the No-Name Lemma by some authors.

\begin{theorem}[{\cite[Theorem 2.1]{CHK}}] \label{t6.2}
Let $L$ be a field and $G$ be a finite group acting on $L(x_1,\ldots,x_m)$,
the rational function field of $m$ variables over $L$.
Suppose that

{\rm (i)}
for any $\sigma \in G$, $\sigma(L)\subset L$;

{\rm (ii)}
the restriction of the action of $G$ to $L$ is faithful;

{\rm (iii)}
for any $\sigma\in G$,
\[
\begin{pmatrix} \sigma(x_1) \\ \vdots \\ \sigma(x_m) \end{pmatrix}
=A(\sigma)\begin{pmatrix} x_1 \\ \vdots \\ x_m
\end{pmatrix}+B(\sigma)
\]
where $A(\sigma)\in GL_m(L)$ and $B(\sigma)$ is an $m\times 1$ matrix over $L$.
Then $L(x_1,\ldots,x_m)=L(z_1,\ldots,z_m)$ where $\sigma(z_i)=z_i$ for any $\sigma\in G$, any $1\le i\le m$.
In particular, $L(x_1,\ldots,x_m)^G=L^G(z_1,\ldots,z_m)$.
\end{theorem}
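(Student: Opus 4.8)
The plan is to read condition~(iii) as saying that the assignment $\sigma\mapsto(A(\sigma),B(\sigma))$ is a $1$-cocycle of $G$ valued in the affine group $GL_m(L)\ltimes L^m$, and to trivialize it in two stages by Hilbert~90. First I would use condition~(ii): a finite faithful group of automorphisms makes $L/L^G$ a finite Galois extension with $\fn{Gal}(L/L^G)=G$ by Artin's lemma, and this is precisely what licenses the cohomological vanishing used below, so (ii) enters in an essential way.

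Comparing $(\sigma\tau)(\mathbf{x})$ computed directly from (iii) with $\sigma(\tau(\mathbf{x}))$, where $\mathbf{x}={}^t(x_1,\dots,x_m)$ and $\sigma$ acts on matrices and vectors entrywise through its action on $L$, the requirement that (iii) define a genuine left action forces the identities $A(\sigma\tau)=\sigma(A(\tau))\,A(\sigma)$ and $B(\sigma\tau)=\sigma(A(\tau))\,B(\sigma)+\sigma(B(\tau))$. The first says $A$ is a $1$-cocycle with values in $GL_m(L)$. Since $H^1(G,GL_m(L))=1$ (Speiser's matrix form of Hilbert~90, i.e.\ the no-name lemma), there exists $C\in GL_m(L)$ with $A(\sigma)=\sigma(C)^{-1}C$. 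Substituting $\mathbf{y}=C\mathbf{x}$ then gives $\sigma(\mathbf{y})=\mathbf{y}+B'(\sigma)$ with $B'(\sigma):=\sigma(C)B(\sigma)$; the linear part has become the identity, and $L(x_1,\dots,x_m)=L(y_1,\dots,y_m)$ because $C$ is invertible over $L$.

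For the translation I would note that $\sigma(\mathbf{y})=\mathbf{y}+B'(\sigma)$ forces $B'(\sigma\tau)=B'(\sigma)+\sigma(B'(\tau))$, i.e.\ $B'$ is an additive $1$-cocycle. By the additive form of Hilbert~90 we have $H^1(G,L)=0$, hence $H^1(G,L^m)=0$, so there is $\mathbf{v}\in L^m$ with $B'(\sigma)=\sigma(\mathbf{v})-\mathbf{v}$. Setting $z_i=y_i-v_i$ yields $\sigma(z_i)=z_i$ for every $\sigma\in G$ and every $i$, while $L(y_1,\dots,y_m)=L(z_1,\dots,z_m)$ since the two coordinate systems differ by a translation with entries in $L$. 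Composing the two substitutions proves $L(x_1,\dots,x_m)=L(z_1,\dots,z_m)$ with each $z_i$ fixed by $G$, which is the first assertion.

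For the final clause I would observe that the $z_i$ arise from the algebraically independent $x_i$ by an invertible affine change of coordinates over $L$, hence are themselves algebraically independent over $L$, and in particular over $L^G$; thus $L(z_1,\dots,z_m)$ is a rational function field on which $G$ acts only through its action on the coefficient field $L$. Then $L(z_1,\dots,z_m)=L\otimes_{L^G}L^G(z_1,\dots,z_m)$ is Galois over $L^G(z_1,\dots,z_m)$ with group $G$ by natural irrationalities, so $[L(z_1,\dots,z_m):L^G(z_1,\dots,z_m)]=|G|=[L(z_1,\dots,z_m):L(z_1,\dots,z_m)^G]$; since $L^G(z_1,\dots,z_m)\subseteq L(z_1,\dots,z_m)^G$, the two coincide, giving $L(x_1,\dots,x_m)^G=L^G(z_1,\dots,z_m)$. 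I expect the only delicate points to be keeping the cocycle conventions straight so that the correct coboundary form $A(\sigma)=\sigma(C)^{-1}C$ emerges, and invoking Hilbert~90 in its matrix form $H^1(G,GL_m(L))=1$; the remaining manipulations are formal.
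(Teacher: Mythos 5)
Your proof is correct. There is nothing in the paper to compare it against: the paper states this theorem as a quotation of \cite[Theorem 2.1]{CHK} and gives no proof of its own. Your argument --- trivialize the linear part by Speiser's theorem $H^1(G,GL_m(L))=1$, then the translation part by additive Hilbert 90 $H^1(G,L^m)=0$ (both available because faithfulness plus Artin's lemma makes $L/L^G$ Galois with group $G$, which is exactly where hypothesis (ii) enters), then conclude $L(z_1,\ldots,z_m)^G=L^G(z_1,\ldots,z_m)$ by the degree count $[L(z_1,\ldots,z_m):L^G(z_1,\ldots,z_m)]=|G|=[L(z_1,\ldots,z_m):L(z_1,\ldots,z_m)^G]$ --- is the standard proof of this result and essentially the one in \cite{CHK}. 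Two cosmetic points: with your conventions it is $A(\sigma)^{-1}$ (equivalently the transpose ${}^tA(\sigma)$), not $A(\sigma)$ itself, that satisfies the usual cocycle identity $a_{\sigma\tau}=a_\sigma\,\sigma(a_\tau)$, which is precisely why the coboundary comes out in the form $A(\sigma)=\sigma(C)^{-1}C$ that you wrote and verified; and ``no-name lemma'' is not a synonym for Speiser's matrix form of Hilbert 90 (the former is the statement about faithful linear actions usually deduced from the latter). Neither point affects the validity of your argument.
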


The following proposition is essentially the same as \cite[page 30, Proposition 1.4]{Len} and \cite[page 35, Lemma 10.2]{Sw3}. We modify the formulation slightly for the use of later application.

\begin{prop} \label{p6.3}
Let $G$ be a finite group, $M$ be a $G$-lattice. Let $k'/k$ be a
finite Galois extension such that there is a surjection $G\to
\fn{Gal}(k'/k)$. Suppose that there is an exact sequence of
$G$-lattices $0\to M_0\to M\to Q\to 0$ where $Q$ is a permutation
$G$-lattice. If $G$ is faithful on the field $k'(M_0)$, then
$k'(M)=k'(M_0)(x_1,\ldots,x_m)$ for some elements
$x_1,x_2,\ldots,x_m$ satisfying $m=\fn{rank}_{\bm{Z}} Q$,
$\sigma(x_j)=x_j$ for any $\sigma\in G$, any $1\le j\le m$.
\end{prop}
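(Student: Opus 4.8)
The plan is to reduce the statement to a direct application of Theorem \ref{t6.2}, taking the ground field to be $L=k'(M_0)$ and producing the new variables from a $\bm{Z}$-splitting of the given exact sequence. The one point that needs care is that, although the $G$-action on $k'(M)$ is multiplicative, its restriction to the variables that lift a permutation basis of $Q$ turns out to be $L$-\emph{linear}; this is exactly where the hypothesis that $Q$ is a permutation lattice enters.

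First I would split $0\to M_0\to M\to Q\to 0$ as abelian groups. Since $Q$ is a permutation lattice it is $\bm{Z}$-free, so the sequence splits over $\bm{Z}$ (not necessarily over $\bm{Z}[G]$). Fix a $\bm{Z}$-basis $\{f_1,\ldots,f_r\}$ of $M_0$ and a $\bm{Z}$-basis $\{e_1,\ldots,e_m\}$ of $Q$ permuted by $G$, say $\sigma\cdot e_j=e_{\pi_\sigma(j)}$, and choose lifts $\tilde e_j\in M$ of the $e_j$. Then $\{f_1,\ldots,f_r,\tilde e_1,\ldots,\tilde e_m\}$ is a $\bm{Z}$-basis of $M$; writing $y_i$ for the variable attached to $f_i$ and $X_j$ for the variable attached to $\tilde e_j$, we get $k'(M_0)=k'(y_1,\ldots,y_r)=:L$ and $k'(M)=L(X_1,\ldots,X_m)$.

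Next I would compute the action on the $X_j$. For $\sigma\in G$ the elements $\sigma\cdot\tilde e_j$ and $\tilde e_{\pi_\sigma(j)}$ have the same image $e_{\pi_\sigma(j)}$ in $Q$, so $\sigma\cdot\tilde e_j=\tilde e_{\pi_\sigma(j)}+w_{\sigma,j}$ with $w_{\sigma,j}\in M_0$. Translating into the multiplicative action of Definition \ref{d2.3}, this reads $\sigma(X_j)=u_{\sigma,j}\cdot X_{\pi_\sigma(j)}$, where $u_{\sigma,j}$ is the monomial in $y_1,\ldots,y_r$ corresponding to $w_{\sigma,j}\in M_0$; in particular $u_{\sigma,j}\in L^\times$. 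Hence for each $\sigma$ the column vector of the $\sigma(X_j)$ equals $A(\sigma)$ applied to the column vector of the $X_i$, where $A(\sigma)$ is the monomial matrix with entry $u_{\sigma,j}$ in position $(j,\pi_\sigma(j))$ and zeros elsewhere. As $\pi_\sigma$ is a permutation and each $u_{\sigma,j}\neq 0$, we have $A(\sigma)\in GL_m(L)$ and $B(\sigma)=0$, so condition (iii) of Theorem \ref{t6.2} holds.

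Finally I would verify the remaining hypotheses of Theorem \ref{t6.2} and conclude. Condition (i) holds because $M_0$ is a $G$-sublattice of $M$, so $G$ sends monomials in the $y_i$ to monomials in the $y_i$ and thus $\sigma(L)\subset L$; condition (ii) is precisely the assumption that $G$ acts faithfully on $k'(M_0)=L$. Theorem \ref{t6.2} then gives $L(X_1,\ldots,X_m)=L(z_1,\ldots,z_m)$ with $\sigma(z_i)=z_i$ for all $\sigma\in G$ and all $i$, and setting $x_i=z_i$ yields $k'(M)=k'(M_0)(x_1,\ldots,x_m)$ with $m=\fn{rank}_{\bm{Z}}Q$ invariant generators, as required. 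I expect the only genuine content to be the third step, the recognition that the multiplicative action linearizes over $L$; everything else is bookkeeping around the splitting of the sequence and the hypotheses of Theorem \ref{t6.2}.
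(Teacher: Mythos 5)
Your proposal is correct and takes essentially the same route as the paper: the paper also lifts a permutation $\bm{Z}$-basis of $Q$ to elements of $M$, observes that on the corresponding variables the multiplicative action becomes $\sigma(X_j)=\alpha_j(\sigma)X_l$ with $\alpha_j(\sigma)\in k'(M_0)^\times$ a monomial in the $M_0$-variables, and then invokes Theorem \ref{t6.2} with $L=k'(M_0)$, using faithfulness exactly as you do. Your write-up merely makes explicit the bookkeeping (the $\bm{Z}$-splitting, the monomial matrix $A(\sigma)$, $B(\sigma)=0$, and condition (i)) that the paper leaves implicit.
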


\begin{proof}
Note that the action of $G$ on $k'(M)$ is the purely
quasi-monomial action in Definition \ref{d2.1}.

Write $M_0=\bigoplus_{1\le i\le n} \bm{Z}\cdot u_i$,
$Q=\bigoplus_{1\le j\le m} \bm{Z}\cdot v_j$. Choose elements
$w_1,\ldots,w_m\in M$ such that $w_j$ is a preimage of $v_j$ for
$1\le j\le m$. It follows that $\{u_1,\ldots,
u_n,w_1,\ldots,w_m\}$ is a $\bm{Z}$-basis of $M$.

For each $\sigma\in G$, since $Q$ is permutation,
$\sigma(w_j)-w_l\in M_0$ for some $w_l$ (depending on $j$). In the
field $k'(M)$, if we write
$k'(M)=k(u_1,\ldots,u_n,w_1,\ldots,w_m)$ as the rational function
field in $m+n$ variables over $k'$, then
$\sigma(w_j)=\alpha_j(\sigma)w_l$ for some $\alpha_j(\sigma)\in
k'(M_0)$.

Since $G$ is faithful on $k'(M_0)$, apply Theorem \ref{t6.2}.
\end{proof}

In the following theorem we consider the rationality of $k(R^{ab})^{D_n}$ and $K(R^{ab})^{D_n}$ ($n$ is odd). Since the proofs are almost the same, we formulate the fixed fields as $k'(R^{ab})^{D_n}$ such that there is a surjection $D_n\to \fn{Gal}(k'/k)$, that is, $k'=k$ in the former case, and $k'=K$ in the latter case.

\begin{theorem} \label{t6.4}
Let $1\to R\to F\xrightarrow{\varepsilon} D_n\to 1$ be the free
presentation of $D_n$ in Definition \ref{d1.1}. Let $k'/k$ be a
finite Galois extension such that there is a surjection $D_n\to
\fn{Gal}(k'/k)$. Assume that $n$ is an odd integer $\ge 3$. Then
$k'(R^{ab})^{D_n}\simeq k'(\bm{Z}[D_n])^{D_n}(t)$ where $t$ is an
element transcendental over $k(D_n)$.

Consequently, if $k(D_n)$ is
rational over $k$ (e.g.\ $\zeta_n+\zeta_n^{-1}\in k$ where
$\zeta_n$ is a primitive $n$-th root of unity), then
$k(R^{ab})^{D_n}$ is rational over $k$. On the other hand, if
$K/k$ is a Galois extension with $D_n\simeq \fn{Gal}(K/k)$, then
$K(R^{ab})^{D_n}$ is rational over $k$.
\end{theorem}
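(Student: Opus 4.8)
The plan is to reduce the asserted field isomorphism to a single comparison between the invariant fields of two lattices that differ only by a sign twist, using the structural decompositions of Section 3 together with Theorem \ref{t5.7} and the no-name-type Proposition \ref{p6.3}. First I would record the two lattice identities that make the ranks match: by Theorem \ref{t5.7}, $R^{ab}\simeq M_+\oplus\widetilde M_+$, and by Theorem \ref{t3.5}, $\bm Z[D_n]\oplus\bm Z\simeq\widetilde M_-\oplus M_+$ (recall $M_+=\bm Z[D_n/\langle\tau\rangle]$). Since $\bm Z$ carries the trivial action, the variable attached to it is fixed by $D_n$, so $k'(\bm Z[D_n]\oplus\bm Z)^{D_n}=k'(\bm Z[D_n])^{D_n}(t)$ with $t$ a single transcendental. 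Hence the whole statement is equivalent to the one $k$-isomorphism
\[
k'(M_+\oplus\widetilde M_+)^{D_n}\;\simeq\;k'(M_+\oplus\widetilde M_-)^{D_n}.
\]

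Next I would strip off the common permutation summand $M_+$. The projections $M_+\oplus\widetilde M_\pm\to M_+$ are split surjections with permutation image $M_+$, and $D_n$ acts faithfully on $k'(\widetilde M_\pm)$ because $\widetilde M_\pm$ contains the faithful lattice $M_\pm$; thus Proposition \ref{p6.3} gives $k'(M_+\oplus\widetilde M_\pm)^{D_n}=k'(\widetilde M_\pm)^{D_n}(x_1,\dots,x_n)$. Because adjoining the same number $n$ of independent transcendentals to $k$-isomorphic fields preserves the isomorphism, it suffices to prove $k'(\widetilde M_+)^{D_n}\simeq k'(\widetilde M_-)^{D_n}$. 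Applying Proposition \ref{p6.3} once more to the two exact sequences of Lemma \ref{l4.5}, namely $0\to N_-\to\widetilde M_+\to\bm Z[D_n/\langle\sigma\rangle]\to0$ and $0\to N_+\to\widetilde M_-\to\bm Z[D_n/\langle\sigma\rangle]\to0$ (both with permutation quotient of rank $2$, and with $D_n$ acting faithfully on $k'(N_\pm)$, as is easily checked), reduces the task further to
\[
k'(N_+)^{D_n}\;\simeq\;k'(N_-)^{D_n}.
\]

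The main obstacle is precisely this last isomorphism. Here $N_-\simeq N_+\otimes\bm Z_-$ is the twist of $N_+$ by the sign character, so the two multiplicative actions agree on $\sigma$ and differ on $\tau$ exactly by inverting all the coordinates; the two lattices are genuinely non-isomorphic (for $n=p$ they are the distinct indecomposables $R$ and $P$ of Theorem \ref{t4.1}), so I cannot hope for a lattice isomorphism and must instead exhibit an explicit birational substitution. I would build it from the explicit $\bm Z$-bases of $N_\pm$ supplied in Lemma \ref{l4.3} and Lemma \ref{l4.4}, using the $\tau$-symmetric and $\tau$-antisymmetric structure of those bases to write down a monomial change of variables over $k'$ that commutes with the $\sigma$-action and conjugates the $\tau$-action of $N_+$ into that of $N_-$. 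The point is that over $\bm Q$ the two actions become equivalent, so such a substitution exists rationally; the genuine work is to check that it is defined over $k'$ and invertible. This is the field-level analogue of the explicit basis computations carried out in Theorems \ref{t3.4}, \ref{t3.5} and \ref{t3.7}.

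Finally I would deduce the three stated consequences. Taking $k'=k$ (the trivial quotient $D_n\to1$), the action on scalars is trivial and the multiplicative action on $\bm Z[D_n]$ is the regular permutation action, so $k'(\bm Z[D_n])^{D_n}=k(D_n)$ by Definition \ref{d6.1}; hence $k(R^{ab})^{D_n}=k(D_n)(t)$, which is $k$-rational as soon as $k(D_n)$ is (e.g.\ when $\zeta_n+\zeta_n^{-1}\in k$). Taking instead $k'=K$ with $\fn{Gal}(K/k)=D_n$, the lattice $\bm Z[D_n]$ is the regular representation, so by the normal basis theorem $K(\bm Z[D_n])^{D_n}=K(x_g:g\in D_n)^{D_n}$ is $k$-rational; therefore $K(R^{ab})^{D_n}=K(\bm Z[D_n])^{D_n}(t)$ is $k$-rational as well.
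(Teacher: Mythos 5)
Your reductions are fine as far as they go: Theorem \ref{t5.7}, Theorem \ref{t3.5}, Lemma \ref{l4.5} and Proposition \ref{p6.3} (with the faithfulness checks you indicate) do correctly reduce the isomorphism $k'(R^{ab})^{D_n}\simeq k'(\bm{Z}[D_n])^{D_n}(t)$ to the single claim $k'(N_+)^{D_n}\simeq k'(N_-)^{D_n}$, and your treatment of the two consequences at the end is essentially the paper's Step 1. The problem is that this single remaining claim is precisely where all the difficulty sits, and the method you propose for it cannot work. A $D_n$-equivariant monomial substitution $k'(N_+)\to k'(N_-)$ has the form $x^m\mapsto c(m)\,x^{Am}$ with $A\in GL_{n-1}(\bm{Z})$ and $c:N_+\to (k')^{\times}$; equivariance forces $A\,g|_{N_+}=g|_{N_-}\,A$ for every $g\in D_n$, i.e.\ $A$ must be an isomorphism of $D_n$-\emph{lattices} $N_+\to N_-$. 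No such isomorphism exists: for $n=p$ these are the distinct indecomposables $R$ and $P$ of Theorem \ref{t4.1} (Lemma \ref{l4.3}). The coefficients $c(m)$ give no extra freedom in the exponent matrix, and the $\bm{Q}$-equivalence of the two representations is irrelevant, because multiplicative invariant fields see exactly the integral structure. Worse, the statement you are trying to prove may be strictly stronger than the theorem: the theorem itself (run through your own reductions) only yields that $k'(N_+)^{D_n}$ and $k'(N_-)^{D_n}$ become isomorphic after adjoining $n+2$ indeterminates to each, i.e.\ a stable isomorphism, so an actual isomorphism is not known and would need a genuinely new idea.

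The paper avoids any comparison of $N_+$ with $N_-$ by routing both sides through the common field $k'(\bm{Z}[D_n/\langle\tau\rangle])^{D_n}$. On the $R^{ab}$ side it uses Theorem \ref{t3.4} (not Theorem \ref{t3.5}): $\widetilde{M}_+\oplus\bm{Z}\simeq \bm{Z}[D_n/\langle\sigma\rangle]\oplus\bm{Z}[D_n/\langle\tau\rangle]$, so Proposition \ref{p6.3} gives $k'(R^{ab})^{D_n}\simeq k'(\bm{Z}[D_n/\langle\tau\rangle])^{D_n}(t_1,t_2,y_1,\ldots,y_{n-1})$. On the group-ring side it makes an \emph{additive} change of variables inside the multiplicative field: writing $k'(\bm{Z}[D_n])=k'(x(\sigma^i),x(\sigma^i\tau):0\le i\le n-1)$ and setting $z_i=x(\sigma^i)+x(\sigma^i\tau)$, the subfield $k'(z_0,\ldots,z_{n-1})$ is $D_n$-isomorphic to $k'(\bm{Z}[D_n/\langle\tau\rangle])$, and the remaining variables $x(\sigma^i)$ transform affinely over it ($\tau\cdot x(\sigma^i)=z_{n-i}-x(\sigma^{n-i})$); Theorem \ref{t6.2} then yields $k'(\bm{Z}[D_n])^{D_n}=k'(\bm{Z}[D_n/\langle\tau\rangle])^{D_n}(u_1,\ldots,u_n)$. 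Both sides are thus purely transcendental extensions, of the same transcendence degree $n+1$, of one and the same field, which is what proves the theorem. Some such mixing of additive and multiplicative structure (or another device of comparable strength) is needed to replace your missing step.
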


\begin{proof}
Step 1. First of all, we explain how the last two assertions
follow from $k'(R^{ab})^{D_n}\simeq k'(\bm{Z}[D_n])^{D_n}(t)$.

If $k=k'$, then
$k(R^{ab})^{D_n}=k'(\bm{Z}[D_n])^{D_n}(t)=k(\bm{Z}[D_n])^{D_n}(t)$.
Note that the fixed field $k(\bm{Z}[D_n])^{D_n}$ is nothing but
$k(D_n)$ in Definition \ref{d6.1}. It is known that, if
$\zeta_n+\zeta_n^{-1}\in k$, then $k(D_n)$ is $k$-rational (see
\cite[Proposition 2.6]{CHK}). Hence the result.

If $k'=K$ and $D_n\simeq \fn{Gal}(K/k)$,
then $K(R^{ab})^{D_n}=K(\bm{Z}[D_n])^{D_n} (t)$.
Since $K(\bm{Z}[D_n])=K(x_1,x_2,\ldots,x_{2n})$ by Proposition \ref{p6.3} where $g\cdot x_i=x_i$ for all $g\in D_n$ and $1\le i\le 2n$.
Thus $K(\bm{Z}[D_n])^{D_n}=K^{D_n}(x_1,\ldots,x_{2n})=k(x_1,\ldots,x_{2n})$.

\medskip
Step 2.
In this step and Step 3, we will show that $k'(R^{ab})^{D_n}=k'(\bm{Z}[D_n])^{D_n}(t)$.

By Theorem \ref{t5.7}, $k'(R^{ab})=k'(M_+\oplus\widetilde{M}_+)$.
Since $\widetilde{M}_+$ is a faithful $D_n$-lattice, the action of
$D_n$ on $k'(\widetilde{M}_+)$ is faithful. By Proposition
\ref{p6.3} (because $M_+$ is a permutation lattice), we find that
$k'(M_+\oplus\widetilde{M}_+)^{D_n}=k'(\widetilde{M}_+)^{D_n}(x_1,\ldots,$
$x_n)$.

By applying Proposition \ref{p6.3} to $k'(\widetilde{M}_+\oplus\bm{Z})$, we obtain that $k'(\widetilde{M}_+\oplus\bm{Z})^{D_n}=k'(\widetilde{M}_+)^{D_n}(t)$.

Hence $k'(R^{ab})^{D_n}=k'(\widetilde{M}_+)^{D_n}(x_1,\ldots,x_n)\simeq k'(\widetilde{M}_+)^{D_n}(t)(x_1,\ldots,x_{n-1}) \simeq k'(\widetilde{M}_+\oplus\bm{Z})^{D_n}(y_1,\ldots,y_{n-1})$.

{}From Theorem \ref{t3.4}, $\widetilde{M}_+\oplus\bm{Z}\simeq
\bm{Z}[D_n/\langle\sigma\rangle]\oplus
\bm{Z}[D_n/\langle\tau\rangle]$. Note that
$\bm{Z}[D_n/\langle\tau\rangle]$ is a faithful $D_n$-lattice.
Hence $k'(\widetilde{M}_{+}\oplus\bm{Z})^{D_n}(y_1,\ldots,y_{n-1})=k'(\bm{Z}[D_n/\langle\sigma\rangle]\oplus
\bm{Z}[D_n/\langle\tau\rangle])^{D_n}$ $(y_1,\ldots,y_{n-1})$
$=k'(\bm{Z}[D_n/\langle\tau\rangle])^{D_n}(t_1,t_2,y_1,\ldots,y_{n-1})$ by Proposition \ref{p6.3} again.

In conclusion, $k'(R^{ab})^{D_n}\simeq
k'(\bm{Z}[D_n/\langle\tau\rangle])^{D_n}(t_1,t_2,y_1,\ldots,y_{n-1})$.

\medskip
Step 3.
Consider $k'(\bm{Z}[D_n])^{D_n}$.
Note that $k'(\bm{Z}[D_n])=k'(x(\sigma^i),x(\sigma^i\tau):0\le i\le n-1)$ where $g\cdot x(\sigma^i)=x(g\sigma^i)$,
$g\cdot x(\sigma^i\tau)=x(g\sigma^i\tau)$ for any $g\in D_n$.

Define $z_i=x(\sigma^i)+x(\sigma^i\tau)$ for $0\le i\le n-1$.
Then the actions of $\sigma$ and $\tau$ are given by
\begin{align*}
\sigma &: z_0\mapsto z_1\mapsto \cdots \mapsto z_{n-1}\mapsto z_0, \\
\tau &: z_0\mapsto z_0,~ z_i\mapsto z_{n-i} \mbox{ for }1\le i\le n-1.
\end{align*}

In other words, $k'(z_i:0\le i\le n-1)$ is $D_n$-isomorphic to $k'(\bm{Z}[D_n/\langle\tau\rangle])$.

Moreover, $\sigma\cdot x(\sigma^i)=x(\sigma^{i+1})$, $\tau\cdot x(\sigma^i)=x(\sigma^{n-i}\tau)=-x(\sigma^{n-i})+z_{n-i}$.
Thus we may apply Theorem \ref{t6.2} to $k'(\bm{Z}[D_n])^{D_n}=k'(z_i,x(\sigma^i):0\le i\le n-1)^{D_n}$.
It follows that $k'(\bm{Z}[D_n])^{D_n}=k'(z_i:0\le i\le n-1)^{D_n}(u_1,\ldots,u_n) \simeq k'(\bm{Z}[D_n/\langle\tau\rangle])^{D_n}(u_1,\ldots,u_n)$.

Obviously we have $k'(\bm{Z}[D_n])^{D_n}(t^{\prime}) \simeq k'(\bm{Z}[D_n/\langle\tau\rangle])^{D_n}(u_1,\ldots,u_n, t^{\prime})$.

Combining the above formula and the final formula in Step 2, we find that $k'(R^{ab})^{D_n}$ $\simeq k'(\bm{Z}[D_n])^{D_n}(t^{\prime})$. Done.
\end{proof}

\begin{proof}[Proof of Theorem \ref{t1.4} and Theorem \ref{t1.3}]
---------------

Apply Theorem \ref{t5.8} and Theorem \ref{t6.4}.
\end{proof}

\begin{proof} [An alternative proof of Theorem \ref{t1.5}]
---------------

The following proof is different from that given by Snider \cite{Sn}. We will show that $k(R^{ab})^{D_2}$ is rational over $k$. Our proof uses an idea of Kunyavskii \cite{Ku1}.

By Lemma \ref{l5.6}, $R^{ab}=\bm{Z}\cdot\bar{a} \oplus \bm{Z}\cdot\bar{b}_0 \oplus \bm{Z}\cdot\bar{b}_1 \oplus \bm{Z}\cdot\bar{c}_0 \oplus \bm{Z}\cdot\bar{c}_1$.

Define $M_1=\{x\in R^{ab}:(\sum_{g\in D_2} g)\cdot x=0\}$, $M_2=R^{ab}/M_1$.
Then $0\to M_1 \to R^{ab}\to M_2\to 0$ is a short exact sequence of $\bm{Z}[D_2]$-lattices.
Note that $M_2$ is a trivial $\bm{Z}[D_2]$-lattice because of the definition of $M_1$.

It is not difficult to show that $M_1=\bm{Z}\cdot (\bar{a}-\bar{b}_0)\oplus \bm{Z}\cdot (\bar{b}_0-\bar{b}_1)\oplus \bm{Z}\cdot(\bar{c}_0-\bar{c}_1)$.
Hence $\fn{rank}_{\bm{Z}} M_1=3$, $\fn{rank}_{\bm{Z}} M_2=2$.
It follows that $M_2\simeq \bm{Z}^{(2)}$.

By Proposition \ref{p6.3} $k(R^{ab})^{D_2}=k(M_1)^{D_2}(x_1,x_2)$.
On the other hand, $D_2$ acts on $k(M_1)$ by purely monomial $k$-automorphisms in the sense of \cite{HK}.
By \cite{HK}, $k(M_1)^{D_2}$ is $k$-rational.
\end{proof}

\begin{remark} (1) If $R_0^{ab}$ is the relation module associated to a free presentation $1\to R\to F\xrightarrow{\varepsilon} D_n\to 1$ where $F$ is a free group of rank $d$. Let $R^{ab}$ be the relation module in Definition \ref{d1.1}. By Lemma \ref{l5.3}, $R_0^{ab}\oplus \bm{Z}[D_n]^{(2)}=R^{ab} \oplus \bm{Z}[D_n]^{(d)}$. Thus the rationality problem for $R_0^{ab}$ may be reduced to that that for $R^{ab}$.

(2) For any field $k$, there are field extensions $k \subset K \subset
L$ such that $K$ is $k$-rational, $L$ is not retract $K$-rational,
but $L$ is still $k$-rational.

In fact, take the group $D_2$ and consider $R^{ab}$. Define
$L=k(R^{ab} \oplus \bm{Z}[D_2])^{D_2}$, $K=k(\bm{Z}[D_2])^{D_2}$.
Since the group $D_2$ acts faithfully on $k(R^{ab})$, we may apply
Theorem \ref{t6.2} to the field $k(R^{ab} \oplus \bm{Z}[D_2])$.
Thus $L=k(R^{ab} \oplus \bm{Z}[D_2])^{D_2} =k(R^{ab})^{D_2}(t_1,
\ldots,t_4)$. Since $k(R^{ab})^{D_2}$ is $k$-rational by Theorem
\ref{t1.5}. Thus $L$ is $k$-rational. Clearly
$K=k(\bm{Z}[D_2])^{D_2}$ is $k$-rational, a well-known result in
Noether's problem.

On the other hand, $L=k(R^{ab} \oplus \bm{Z}[D_2])^{D_2}$ may be
regarded as a function field of an algebraic torus over
$K=k(\bm{Z}[D_2])^{D_2}$, which is split by a $D_2$-extension with
character module $R^{ab}$. By Theorem \ref{t5.8} $L$ is not
retract $K$-rational.
\end{remark}

\begin{theorem} \label{t6.9}
Let $k$ be any field and $M$ be a $D_2$-lattice with rank $\le 5$, then $k(M)^{D_2}$ is stably rational over $k$
\end{theorem}

\begin{proof}
We use the idea of Kunyavskii \cite{Ku1} again.

Let $M$ be a $D_2$-lattice with rank $\le 5$. Define $M_1=\{x\in M:(\sum_{g\in D_2} g)\cdot x=0\}$, $M_2=M/M_1$.
Then $0\to M_1 \to M\to M_2\to 0$ is a short exact sequence of $\bm{Z}[D_2]$-lattices.

Note that $M_2$ is a trivial $\bm{Z}[D_2]$-lattice because of the definition of $M_1$. By Proposition \ref{p6.3}, it suffices to show that $k(M_1)^{D_2}$ is stably rational over $k$. A $D_2$-lattice $N$ satisfying that $(\sum_{g\in D_2} g)\cdot N=0$ is called an anisotropic lattice.

\medskip
Although there are infinitely many indecomposable lattices over $D_2 \simeq C_2 \times C_2$, the total number of the indecomposable anisotropic $D_2$-lattices is finite. In fact, there are only $8$ such lattices by Drozd \cite{Dr} (also see \cite[page 538; Vo, page 57]{Ku1}).

Only two of them are of rank $3$; the remaining ones are of lower rank. The rank $3$ lattices are $I_{D_2}$ and $I_{D_2}^0$ where $I_{D_2}$ is the augmentation ideal of $\bm{Z}[D_2]$ (see Lemma \ref{l5.2}). By Theorem \ref{t2.7}, $[I_{D_2}^0]^{fl} \neq 0$. It is not difficult to verify that all the other indecomposable anisotropic lattices $N$ satisfy that $[N]^{fl}=0$.

\bigskip
Since $M_1$ is of rank $\le 5$, it is a direct sum of these indecomposable anisotropic lattices such that the lattice $I_{D_2}^0$ appears at most once. If $N$ is an indecomposable anisotropic lattice other than $I_{D_2}^0$, choose a flabby resolution $0 \rightarrow N \rightarrow P_1 \rightarrow P_2 \rightarrow 0$ where $P_1$ and $P_2$ are permutation lattices. By Proposition \ref{p6.3}, $k(N)^{D_2}$ is stably isomorphic to $k(P_1)^{D_2}$ (if $N$ is not a faithful $D_2$-lattice, but is a faithful $D_2/H$-lattice, we may assume that $P_1$ and $P_2$ are permutation $D_2/H$-lattices by \cite[page 179-180]{CTS}).

\medskip
In summary, $k(M)^{D_2}$ is stably isomorphic to $k(P)^{D_2}$ or $k(I_{D_2}^0 \oplus Q)^{D_2}$ where $P$ and $Q$ are permutation lattices.

Since the action of $D_2$ on $k(P)$ is linear, it is easy to show that $k(P)^{D_2}$ is rational over $k$. Applying Theorem \ref{t6.2} to $k(I_{D_2}^0 \oplus Q)^{D_2}$, we find that $k(I_{D_2}^0 \oplus Q)^{D_2}$ is rational over $k(I_{D_2}^0)^{D_2}$. But $k(I_{D_2}^0)^{D_2}$ is $k$-rational by \cite{HK}. Hence $k(I_{D_2}^0 \oplus Q)^{D_2}$ is $k$-rational.
\end{proof}

The method in the proof of Theorem \ref{t6.4} may be applied to other rationality problems.
We record one of them in the following.

\begin{theorem} \label{t6.7}
Let $k$ be any field, $G=\langle \sigma,\tau:\sigma^n=\tau^2=1,\tau\sigma\tau^{-1}=\sigma^{-1}\rangle\simeq D_n$ where $n\ge 3$ is an odd integer.
Define an action of $G$ on the rational function field $k(x_1,x_2,\ldots,x_{n-1})$ through $k$-automorphisms defined by
\begin{align*}
\sigma &: x_1\mapsto x_2\mapsto \cdots \mapsto x_{n-1} \mapsto 1/(x_1x_2\cdots x_{n-1}), \\
\tau &: x_i \leftrightarrow x_{n-i}.
\end{align*}

Then $k(x_1,\ldots, x_{n-1})^G$ is stably $k$-rational if and only if $k(G)$ is stably $k$-rational.
\end{theorem}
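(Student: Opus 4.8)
The plan is to recognize the given action as the multiplicative action of $G$ on the lattice $N_+$ of Section 3, and then to transfer the lattice identities proved in Section 3 to the level of function fields by repeated use of Proposition \ref{p6.3}. Writing $x_i$ for the $i$-th basis vector, the rule $\sigma: x_1\mapsto\cdots\mapsto x_{n-1}\mapsto 1/(x_1\cdots x_{n-1})$ is, additively, $e_1\mapsto e_2\mapsto\cdots\mapsto e_{n-1}\mapsto-(e_1+\cdots+e_{n-1})$, and $\tau: x_i\leftrightarrow x_{n-i}$ is $e_i\leftrightarrow e_{n-i}$; these are exactly the matrices $A'$ and $B'$ defining $N_+$. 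Hence $k(x_1,\ldots,x_{n-1})^G=k(N_+)^G$, and since $k(\bm{Z}[D_n])^{D_n}\simeq k(D_n)=k(G)$ (as in Theorem \ref{t6.4}), the claim reduces to showing that $k(N_+)^G$ and $k(\bm{Z}[G])^G$ are stably isomorphic over $k$.

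The engine is Proposition \ref{p6.3}: an exact sequence $0\to M_0\to M\to Q\to 0$ with $Q$ permutation and $G$ faithful on $k(M_0)$ yields $k(M)^G=k(M_0)^G(t_1,\ldots,t_m)$ with $m=\fn{rank}_{\bm{Z}}Q$. Here $G$ acts trivially on $k$, so faithfulness on $k(M_0)$ means exactly that $M_0$ is a faithful lattice; one checks that $N_+$, $\widetilde{M}_-$ and $\bm{Z}[G]$ are all faithful. First I would apply this to the sequence of Lemma \ref{l4.5}, namely $0\to N_+\to\widetilde{M}_-\to\bm{Z}[G/\langle\sigma\rangle]\to 0$, to get
\[
k(\widetilde{M}_-)^G\simeq k(N_+)^G(y_1,y_2),
\]
since $\fn{rank}_{\bm{Z}}\bm{Z}[G/\langle\sigma\rangle]=2$.

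Next I would invoke the stable identity of Theorem \ref{t3.5}, namely $\widetilde{M}_-\oplus\bm{Z}[G/\langle\tau\rangle]\simeq\bm{Z}[G]\oplus\bm{Z}$, and add the indicated permutation summands to both sides using Proposition \ref{p6.3} (applied to the split sequences with permutation quotients $\bm{Z}[G/\langle\tau\rangle]$ and $\bm{Z}$). This gives
\[
k(\widetilde{M}_-)^G(z_1,\ldots,z_n)\simeq k(\bm{Z}[G])^G(w),
\]
and substituting the previous isomorphism produces the stable isomorphism
\[
k(N_+)^G(y_1,y_2,z_1,\ldots,z_n)\simeq k(\bm{Z}[G])^G(w)
\]
over $k$. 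Since a stable $k$-isomorphism preserves stable $k$-rationality, $k(N_+)^G$ is stably $k$-rational if and only if $k(G)=k(\bm{Z}[G])^G$ is, which is the assertion.

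The substantive input is entirely contained in the lattice computations of Section 3 and Section 4 (especially Theorem \ref{t3.5} and Lemma \ref{l4.5}); the only points demanding care in the writeup are verifying that the displayed action is literally $N_+$ (the sign in $\sigma(x_{n-1})$ and the $\tau$-flip) and confirming faithfulness of each subject lattice so that Proposition \ref{p6.3} applies. The transcendence-degree bookkeeping is routine and irrelevant to the stable-rationality conclusion, so I do not expect a serious obstacle.
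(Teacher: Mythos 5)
Your proposal is correct and follows essentially the same route as the paper's own proof: identify the action with the lattice $N_+$, apply Proposition \ref{p6.3} to the sequence $0\to N_+\to\widetilde{M}_-\to\bm{Z}[G/\langle\sigma\rangle]\to 0$ of Lemma \ref{l4.5}, and then use the identity $\widetilde{M}_-\oplus\bm{Z}[G/\langle\tau\rangle]\simeq\bm{Z}[G]\oplus\bm{Z}$ of Theorem \ref{t3.5} to conclude stable isomorphism with $k(\bm{Z}[G])^G=k(G)$. Your explicit verification that the displayed action matches the matrices $A'$, $B'$ of $N_+$ and that the relevant lattices are faithful is slightly more careful than the paper's write-up (which even misstates $M$ as $\bm{Z}[G/\langle\tau\rangle]$ before correcting itself to $N_+$), but the substance is identical.
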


\begin{proof}
We may write $k(x_1,\ldots,x_{n-1})^G=k(M)^G$ where $M$ is the $G$-lattice $\bm{Z}[G/\langle\tau\rangle]$.
Note that $M$ is nothing but $N_+$ in Definition \ref{d3.2}.

By Lemma \ref{l4.5}, we find that $0\to N_+\to \widetilde{M}_-\to \bm{Z}[G/\langle\sigma\rangle]\to 0$ is an exact sequence of $G$-lattices.
By Proposition \ref{p6.3}, $k(\widetilde{M}_-)$ is $G$-isomorphic to $k(M)(y_1,y_2)$ where $\sigma(y_i)=\tau(y_i)=y_i$ for $1\le i\le 2$.

By Theorem \ref{t3.5},
$\widetilde{M}_-\oplus\bm{Z}[G/\langle\tau\rangle]\simeq
\bm{Z}[G]\oplus \bm{Z}$. Thus, by Proposition \ref{p6.3} again,
$k(\widetilde{M}_-)(z_1,\ldots,z_n)$ is $G$-isomorphic to
$k(\bm{Z}[G])(z_0)$ where $\sigma(z_i)=\tau(z_i)=z_i$ for $0\le
i\le n$.

Hence $k(M)^G$ is stably isomorphic to $k(\bm{Z}[G])^G$, which is nothing but $k(G)$.
\end{proof}

\begin{remark}
We thank the referee who recast the above result as follows: Let $G$ be a finite group and $M$ be a faithful $G$-lattice satisfying that $[M]^{fl}$ is permutation (see Definition \ref{d2.1}). For any field $k$, $k(M)^G$ is stably rational over $k$ if and only if so is $k(G)$.

The proof goes as follows: Considering the fixed field $k(M \oplus \bm{Z}[G])^G$. Note that $k(M \oplus \bm{Z}[G])^G$ is rational over $k(M)^G$ by Proposition \ref{p6.3}. On the other hand, since $[M]^{fl}$ is permutation, $k(M \oplus \bm{Z}[G])^G$ is stably rational over $k(\bm{Z}[G])^G = k(G)$ by Theorem \ref{t2.5}. Hence $k(M)^G$ and $k(G)$ are stably isomorphic.

It remains to show that the lattice $M$ in the proof of Theorem \ref{t6.7} satisfies the condition that $[M]^{fl}$ is permutation. In fact, from the proof, $M \simeq N_{+}$. Moreover, we have the exact sequences $0\to N_+\to \widetilde{M}_-\to \bm{Z}[G/\langle\sigma\rangle]\to 0$ (by Lemma \ref{l4.5}) and
$\widetilde{M}_-\oplus\bm{Z}[G/\langle\tau\rangle]\simeq
\bm{Z}[G]\oplus \bm{Z}$ (by Theorem \ref{t3.5}). Thus $[M]^{fl}=0$.

Here is another interpretation of the lattice $M$. Let $n\ge 3$ be an odd integer and $G=\langle \sigma, \tau: \sigma^n=\tau^{2^d}=1, \tau \sigma \tau^{-1}=\sigma^r \rangle$ where $r^2 \equiv 1$ (mod $n$). Define $H=\langle \tau \rangle$ and $I_{G/H}$ the augmentation lattice of the augmentation map $\bm{Z}[G/H] \to \bm{Z}$. Then we get an exact sequence $0 \to I_{G/H} \to \bm{Z}[G/H] \to \bm{Z} \to 0$. Taking the dual of it, we find that $0 \to \bm{Z} \to \bm{Z}[G/H] \to I_{G/H}^0 \to 0$. It is easy to see that, when $G$ is the dihedral group in Theorem \ref{t6.7}, the lattice $M$ in the proof of Theorem \ref{t6.7} is isomorphic to $I_{G/H}^0$. When $n=p^e$ for some prime number $p$, it is shown by Colliot-Th\'el\`ene and Sansuc that $[I_{G/H}^0]^{fl}=0$ (see, for examples, \cite[page 55, Theorem 4]{Vo}).

\end{remark}

\bigskip

\bigskip

\renewcommand{\refname}{\centering{References}}


\begin{thebibliography}{HKK}

\bibitem[CHK]{CHK}
H. Chu, S. J. Hu and M. Kang, \textit{Noether's problem for
dihedral 2-groups}, Comment. Math. Helv. 79 (2004), 147--159.


\bibitem[CR]{CR1}
C. W. Curtis and I. Reiner,
Methods of representation theory vol. 1,
Wiley-Interscience Publ., New York, 1981.

\bibitem[CTS]{CTS}
J.-L. Colliot-Th\'el\`ene and J.-J. Sansuc,
\textit{La $R$-\'equivalence sur les tores},
Ann. Sci. \'Ecole Norm. Sup. (4) 10 (1977), 175--229.


\bibitem[Dr]{Dr}
Yu. A. Drozd, \textit{Representations of cubic $\bm{Z}$-rings}, Soviet Math. Dokl. 8 (1967), 572--574.

\bibitem[EK]{EK}
S. Endo and M. Kang, \textit{Algebraic tori revisited}, to appear in ``Asian J. Math.", arXiv:
1406.0949v2.

\bibitem[EM1]{EM1}
S. Endo and T. Miyata, \textit{Quasi-permutation modules over
finite groups}, J. Math. Soc. Japan 25 (1973), 397--421.

\bibitem[EM2]{EM2}
S. Endo and T. Miyata, \textit{On a classification of the function
fields of algebraic tori}, Nagoya Math. J. 56 (1974), 85--104;
ibid. 79 (1980), 187--190.


\bibitem[GR]{GR}
K. W. Gruenberg and K. W. Roggenkamp, \textit{Decomposition of relation modules of a finite group}, J. London Math. Soc. (2), 12 (1976), 262--266.

\bibitem[Gr1]{Gr1}
K. W. Gruenberg, \textit{Relation modules of finite groups}, CBMS
Regional Conference Series vol. 25, Amer. Math. Soc., Providence,
1976.

\bibitem[Gr2]{Gr2}
K. W. Gruenberg, \textit{Free abelianised extensions of finite
groups}, in ``Homological group theory" edited by C. T. C. Wall,
London Math. Soc. Lecture Notes Series vol. 36, Cambridge Univ.
Press, Cambridge, 1979.

\bibitem[HK]{HK}
M. Hajja and M. Kang, \textit{Finite group actions on rational
function fields}, J. Algebra 149 (1992), 139--154.



\bibitem[HKK]{HKK}
A. Hoshi, M. Kang and H. Kitayama, \textit{Quasi-monomial actions
and some 4-dimensional rationality problem}, J. Algebra 403
(2014), 363--400.

\bibitem[HS]{HS}
P. J. Hilton and U. Stammbach, \textit{A course in homological
algebra}, Springer GTM vol. 4, Springer-Verlag, Berlin, 1971.

\bibitem[Ja]{Ja}
N. Jacobson, \textit{Basic algebra II}, Second edition, reprinted by Dover Publ. Inc, Mineola, New York, 2009.

\bibitem[Ka]{Ka2}
M. Kang, \textit{Retract rational fields}, J. Algebra 349 (2012), 22--37.

\bibitem[Ku1]{Ku1}
B. E. Kunyavskii, \textit{On tori with a biquadratic splitting field},
Math.\ USSR Izv. 12 (1978), 536--542.

\bibitem[Ku2]{Ku2}
B. E. Kunyavskii, \textit{Tori split over a Galois extension with group $S_3$},
Studies in Number Theory, Saratov Univ. Press, Saratov, 1982 (Russian).



\bibitem[Kur]{Kur}
A. G. Kurosh, \textit{Theory of groups, vol.2}, Chelsea Publ., New
York, 1960.


\bibitem[Le]{Le}
M. P. Lee, \textit{Integral representations of dihedral groups of order $2p$},
Trans. Amer. Math. Soc. 110 (1964), 213--231.

\bibitem[Len]{Len}
H. W. Lenstra, Jr., \textit{Rational functions invariant under a
finite abelian group}, Invent. Math. 25 (1974), 299--325.

\bibitem[Lo]{Lo}
M. Lorenz,
\textit{Multiplicative invariant theory},
Encyclo. Math. Sci. vol. 135, Springer-Verlag, 2005, Berlin.

\bibitem[Ma]{Ma}
W. S. Massey, \textit{Algebraic topology: an introduction},
Harcount, Brace World, New York, 1967.

\bibitem[Mi]{Mi}
J. Milnor, Introduction to algebraic K-theory, Ann. Math. Studies
vol. 72, Princeton Univ. Press, Princeton, 1971.


\bibitem[MS]{MS}
A. S. Merkurjev and A. A. Suslin, \textit{$K$-cohomology of
Severi-Brauer varieties and the norm residue homomorphism}, Math.
USSR Izv. 21 (1983), 307--340.


\bibitem[Sa]{Sa}
D. J. Saltman, \textit{Retract rational fields and cyclic Galois extensions},
Israel J. Math. 47 (1984), 165--215.

\bibitem[Sn]{Sn}
R. Snider, \textit{Is the Brauer group generated by cyclic
algebras ?} in ``Ring theory Waterloo 1978", edited by D.
Handelman and J. Lawrence, Springer LNM vol. 734, Springer-Verlag,
Berlin, 1979.



\bibitem[Sw1]{Sw3}
R. G. Swan, \textit{Noether's problem in Galois theory},
in ``Emmy Noether in Bryn Mawr", edited by B. Srinivasan and J. Sally,
Springer-Verlag, Berlin, 1983, pp.~21--40.

\bibitem[Sw2]{Sw4}
R. G. Swan, \textit{The flabby class group of a finite cyclic
group}, in ``Proceedings of the 4th International Congress of
Chinese Mathematicians vol. 1", edited by L. Ji, K. Liu, L. Yang
and S.-T. Yau, Higher Education Press and International Press,
Beijing and Somerville, MA, 2008.

\bibitem[Vo]{Vo}
V. E. Voskresenskii, \textit{Algebraic groups and their birational invariants},
Translations of Math. Monographs, vol. 179, Amer. Math. Soc., 1998, Providence, Rhode Island.


\end{thebibliography}
\end{document}